\theoremstyle{plain}
\newtheorem{theorem}{Theorem}[section]
\newtheorem{proposition}[theorem]{Proposition}
\newtheorem{conjecture}[theorem]{Conjecture}
\newtheorem{corollary}[theorem]{Corollary}
\newtheorem{property}[theorem]{Property}
\newtheorem{lemma}[theorem]{Lemma}
\theoremstyle{remark}
\newcommand{\Z}{\mathbb{Z}}
\DeclareMathOperator{\CR}{cr}
\newcommand{\Mod}[1]{\ (\mathrm{mod}\ #1)}
\title{On Kainen's conjectures on surface crossing numbers}
\author{Timothy Sun\\Department of Computer Science\\San Francisco State University}
\date{}
\begin{document}

\maketitle

\begin{abstract}
In 1972, Kainen proved a general lower bound on the crossing number of a graph in a closed surface and conjectured that this bound is tight when the graph is either a complete graph or a complete bipartite graph, and the surface is of genus close to the minimum genus of that graph. Prior to the present work, these conjectures were known to be true only for small cases and when the conjectures predict a crossing number of 0, i.e., when a triangular or quadrangular embedding was already known. We show that Kainen's conjectures are true except for the three graphs $K_9$, $K_{3,5}$, and $K_{5,5}$. We also prove nonorientable analogues of these conjectures, where the only exceptions to the general formulas are $K_7$ and $K_8$. 
\end{abstract}

\section{Introduction}

Perhaps the two best-known topological parameters for graphs are the \emph{(minimum, orientable) genus} and the \emph{(planar) crossing number}. Unless otherwise specified, we restrict ourselves to embeddings in orientable surfaces. For a graph $G$, let $\gamma(G)$ denote the genus of $G$. The genus formulas of the complete graphs $K_n$ and the complete bipartite graphs $K_{m,n}$ are:
\begin{equation}
\gamma(K_n) = H(n) := \left\lceil \frac{(n-3)(n-4)}{12}\right\rceil, \text{~for~} n \geq 3 \text{~and~}
\label{eq-genus-kn}
\end{equation}
\begin{equation}
\gamma(K_{m,n}) = H(m,n) := \left\lceil \frac{(m-2)(n-2)}{4}\right\rceil, \text{~for~} m,n \geq 2.
\label{eq-genus-kmn}
\end{equation}
Calculating the genus of the complete graphs is the bulk of the proof of the Map Color Theorem of Ringel, Youngs, and others \cite{Ringel-MapColor}. The genus of the complete bipartite graphs was first proven by Ringel \cite{Ringel-BipartiteOrientable}, though other proofs have been given, most notably by Bouchet \cite{Bouchet-Diamond}. These two formulas match the so-called ``Euler lower bound'' on the genus of simple graphs, which is derived from the Euler polyhedral equation and the girth of the graph. 

On the other hand, the crossing numbers of these graphs are unknown in general, and the best known drawings show that the crossing numbers are at most
\begin{equation*}
\CR(K_{n}) \leq \frac{1}{4} \bigg\lfloor \frac{n}{2}\bigg\rfloor \bigg\lfloor \frac{n-1}{2} \bigg\rfloor \bigg\lfloor \frac{n-2}{2}\bigg\rfloor \bigg\lfloor \frac{n-3}{2} \bigg\rfloor,\text{~and~}
\end{equation*}
\begin{equation*}
\CR(K_{m,n}) \leq \bigg\lfloor \frac{m-1}{2}\bigg\rfloor \bigg\lfloor \frac{m}{2} \bigg\rfloor \bigg\lfloor \frac{n-1}{2}\bigg\rfloor \bigg\lfloor \frac{n}{2} \bigg\rfloor.
\end{equation*}
These upper bounds are thought to be optimal (these conjectures are variously attributed to Harary, Hill, Guy, Zarankiewicz, etc.), but proving the matching lower bounds is a well-known and difficult open problem. Equality has been shown for some small cases \cite{PanRichter-K11, Kleitman-K5n, Woodall-Cyclic}.

A generalization of both of these parameters is the \emph{surface crossing number} $\CR_g(G)$, which is the minimum number of crossings needed for drawing $G$ in the orientable surface $S_g$, the sphere with $g$ handles. Kainen \cite{Kainen-CrossingLower} derived a lower bound for this parameter by noting that if one edge is deleted from each crossing, the drawing becomes an embedding, in which case the Euler polyhedral formula applies. Kainen's lower bound is far smaller than the conjectured values for the planar crossing numbers $\CR_0(K_n)$ and $\CR_0(K_{m,n})$, but when the genus of the surface is close to the minimum genus of the graph, the crossing number is often close to, or even exactly equal to, the lower bound. Kainen and White~\cite{KainenWhite-Stable} called this phenomenon ``stability."

Kainen \cite{Kainen-CrossingLower} conjectured a stability result for $K_n$ and $K_{m,n}$: if we replace the ceiling functions in the definition of $H(n)$ and $H(m,n)$ (see equations (\ref{eq-genus-kn}) and (\ref{eq-genus-kmn}) above) with floor functions, we obtain the expressions
\begin{align*}
h(n) &:= \left\lfloor \frac{(n-3)(n-4)}{12}\right\rfloor, \text{~and} \\
h(m,n) &:= \left\lfloor \frac{(m-2)(n-2)}{4}\right\rfloor. \\
\end{align*}
In the surfaces of orientable genus $h(n)$ and $h(m,n)$, Kainen conjectured that the crossing numbers of $K_n$ and $K_{m,n}$, respectively, equal the value obtained from his lower bound. Our proof of Kainen's conjectures can be stated in the following way:

\begin{theorem}
The surface crossing number of the complete graph $K_n$, $n \geq 3$, in the surface of genus $h(n)$ is $$\CR_{h(n)}(K_n) = \frac{(n-3)(n-4)}{2} \bmod{6},$$ except when $n = 9$. The crossing number of $K_9$ in the surface $S_2$ is 4. 
\label{thm-kn}
\end{theorem}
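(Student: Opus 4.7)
The plan is to combine a uniform lower bound from Kainen's Euler-style argument with a case-by-case upper-bound construction indexed by $n \bmod 12$, and to dispose of the $K_9$ exception by a separate argument. For the lower bound, given any drawing of $K_n$ in $S_{h(n)}$ with $c$ crossings, I would delete one edge from each crossing to obtain an embedding of a simple subgraph on $\binom{n}{2}-c$ edges. Since every face has length at least $3$, Euler's formula in $S_{h(n)}$ yields
$$c \;\geq\; \frac{(n-3)(n-4)}{2} - 6\,h(n) \;=\; \frac{(n-3)(n-4)}{2} \bmod 6,$$
which is precisely the claimed value and suffices whenever $n \neq 9$.

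For the upper bound, a short calculation shows that $(n-3)(n-4) \bmod 12 \in \{0,2,6,8\}$, so $h(n)=H(n)$ precisely when $n \equiv 0,3,4,7 \pmod{12}$, and $h(n)=H(n)-1$ otherwise. In the four triangulating residues the target crossing number is $0$, and the triangular embeddings of $K_n$ from the Map Color Theorem~\cite{Ringel-MapColor} already realise the drawing in $S_{h(n)}=S_{H(n)}$ with no crossings. In each of the remaining eight residues the target is $1$, $3$, or $4$, and my strategy would be to start from a minimum-genus embedding of $K_n$ in $S_{H(n)}$ and \emph{absorb one handle}: cut the handle along its meridian and cap off the two resulting discs, re-routing the edges that had passed through the handle as chords in a small planar patch. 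The resulting crossing count is exactly the crossing number of the chord diagram formed by those edges, so the problem reduces to locating, in each Ringel--Youngs embedding, a handle whose meridian is met by a configuration of edges realising the target $1$, $3$, or $4$. The ``slack'' $6H(n)-(n-3)(n-4)/2$ in the Euler inequality, which lies in $\{2,3,5\}$ depending on the residue, controls the non-triangular face structure of the $H(n)$-embedding and therefore pinpoints the natural choice of handle. I expect this case analysis across the eight non-triangulating residues, using the detailed current-graph descriptions from the Map Color Theorem, to be the main obstacle; small anchors such as $\CR(K_5)=1$, $\CR(K_6)=3$, and $\CR_1(K_8)=4$ are already in the literature and serve as base cases.

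For the $n=9$ exception, Kainen's bound yields only $\CR_2(K_9) \geq 3$, so I must rule out $3$ crossings separately. After deleting one edge per crossing, such a drawing would give an embedding of a graph $H = K_9 - \{e_1,e_2,e_3\}$ with $33$ edges into $S_2$; Euler's formula then forces every face to be a triangle. The plan is to enumerate the isomorphism types of $H$ (according to how the three deleted edges meet in $K_9$) and, for each, produce a rotation-system or parity obstruction to the existence of a triangular embedding of $H$ in $S_2$. Combined with an explicit drawing of $K_9$ in $S_2$ with $4$ crossings, this yields $\CR_2(K_9)=4$ and closes the last case.
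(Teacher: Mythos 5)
Your lower bound is exactly Kainen's argument as the paper uses it, and the four triangulating residues $n \equiv 0,3,4,7 \pmod{12}$ are indeed dispatched by the Map Color Theorem. The gap is in the ``absorb one handle'' step, which is where the entire content of the theorem lives. First, the local picture is not right: cutting a handle along a meridian and capping produces \emph{two} disks that become two separate faces of the reduced embedding, and each severed edge now has loose ends on the two different caps. Reconnecting them is not a chord-diagram problem inside one small planar patch; it requires routing arcs between two faces across the reduced surface, and the cost of that depends on how far apart the caps are, not on the chord diagram. Second, even granting a clean local model, you would need the Ringel--Youngs genus embedding of $K_n$ to contain a nonseparating simple closed curve meeting exactly $t(n)\in\{1,3,4\}$ edges, each re-insertable with a single crossing after capping. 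That is essentially equivalent to asking for what the paper calls a Kainen subembedding, and it is precisely what fails for the off-the-shelf constructions: the paper records Gross's observation that in the standard current-graph embeddings for $n \equiv 10 \pmod{12}$ the vortices (the vertices carrying the missing edges) sit too far apart for the missing edges to be added with one crossing each, and that this only gets worse as $n$ grows. The paper's actual proof therefore does not reduce to known genus embeddings; it builds, residue by residue, new families of index $1$, $2$, and $3$ current graphs yielding triangular embeddings of $K_n-K_2$, $K_n-K_3$, $K_n-K_5$, or $K_n-K_6$ directly in $S_{h(n)}$, and then uses carefully chosen edge flips and added handles to bring the missing edges within one crossing of realizability. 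Your proposal would have to supply an equivalent amount of construction; the reduction as stated does not make the eight hard residues any easier.

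The $K_9$ plan is sound in outline: a $3$-crossing drawing would force a triangular embedding in $S_2$ of $K_9$ minus exactly three edges (fewer deletions are impossible by Euler's formula), and one could enumerate the five isomorphism types of the deleted triple and refute each. This is a genuine finite check, but note the paper instead invokes Huneke's stronger theorem that \emph{no} simple graph on $9$ vertices triangulates $S_2$, together with Riskin's explicit $4$-crossing drawing, so you would be reproving a known result the hard way.
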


\begin{theorem}
The surface crossing number of the complete bipartite graph $K_{m,n}$, $m,n \geq 3$ in the surface of genus $h(m,n)$ is
$$\CR_{h(m,n)}(K_{m,n}) = (m-2)(n-2) \bmod{4},$$ except when $(m,n) = (3,5)$, $(5,3)$, or $(5,5)$. The crossing numbers of $K_{3,5}$ in $S_0$ and $K_{5,5}$ in $S_2$ are 4 and 2, respectively. 
\label{thm-kmn}
\end{theorem}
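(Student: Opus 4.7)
My plan is to prove matching lower and upper bounds on $\CR_{h(m,n)}(K_{m,n})$, treating the three exceptional cases separately.

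For the lower bound I use Kainen's bipartite counting argument: given any drawing of $K_{m,n}$ in $S_g$ with $c$ crossings, I delete one edge at each crossing to obtain an embedding of a bipartite subgraph with $m+n$ vertices and $mn - c$ edges. Since the girth is at least $4$, every face has at least $4$ sides, so $2(mn-c) \geq 4F$; Euler's formula $V - E + F = 2 - 2g$ then gives $c \geq (m-2)(n-2) - 4g$, and setting $g = h(m,n)$ yields the lower bound $(m-2)(n-2) \bmod 4$.

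For the matching upper bound, let $r := (m-2)(n-2) \bmod 4$. It suffices to find a set $F$ of $r$ edges in $K_{m,n}$ such that $K_{m,n} - F$ has a quadrangular embedding in $S_{h(m,n)}$ in which each deleted edge joins vertices lying on two edge-sharing faces; adding the deleted edges back then produces a drawing with exactly $r$ crossings. An Euler calculation confirms that the face count of such a quadrangular embedding is forced to be $(mn-r)/2$, consistent with $S_{h(m,n)}$, so the content is existence. For $r=0$ this is the classical quadrangular embedding of Ringel and Bouchet. For $r \in \{1,2,3\}$ the natural tool is Ringel's current-graph construction for the genus of $K_{m,n}$, adapted so that a specified set of $r$ edges can be deleted while preserving quadrangularity on the remaining faces; equivalently, a Bouchet-style diamond-sum assembly can build arbitrary $(m,n)$ from small base embeddings that accommodate the extra edges. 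The four parity classes of $(m,n)$ correspond to the four residues of $r$ and will require distinct constructions.

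The main obstacle will be the odd-odd case ($r \in \{1,3\}$), which is exactly where the three exceptions live. I must produce the construction uniformly for large $m, n$ and verify small base cases individually. The exceptions are then handled as follows: $\CR_0(K_{3,5}) = \CR_0(K_{5,3}) = 4$ follows from classical planar crossing number results, and for $K_{5,5}$ in $S_2$ the improved lower bound $\CR_2(K_{5,5}) \geq 2$ comes from a short combinatorial argument that rules out any quadrangular embedding of $K_{5,5}$ minus one edge in $S_2$ whose missing edge can be re-added with only one crossing, with a matching explicit two-crossing drawing supplying the upper bound.
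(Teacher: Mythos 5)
Your lower bound and your overall strategy --- quadrangular embeddings of $K_{m,n}$ minus $r$ edges, each re-insertable with one crossing, assembled by diamond sums from small base cases, with the three exceptions handled separately --- match the paper's. But the proposal stops exactly where the proof begins: every existence claim in your upper-bound paragraph is asserted rather than established, and those claims are the entire content of the theorem. Concretely, you would need to supply (i) the base drawings themselves --- the paper obtains $K_{3,n}$ and $K_{4,n}$ for odd $n$ not from current graphs but by ``doubling'' right vertices of degree $3$ or $4$ in a quadrangular embedding of $K_{3,4k+2}$ or $K_{4,n-1}$ (each doubling contributing $1$ or $2$ crossings), and it needs explicit ad hoc drawings for $K_{5,7}$ and $K_{5,9}$ plus a small grid of cases $m,n\in\{6,\dots,9\}$; and (ii) a proof that the diamond sum is compatible with drawings that already contain crossings. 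Point (ii) is a genuine subtlety your plan omits: to excise the disk around a vertex one needs that vertex to have responsibility zero (a ``slacker''), and on the correct side of the bipartition; the paper secures this via a pigeonhole argument (Proposition \ref{prop-pigeon}) together with a deliberate choice in the $K_{3,n}$ construction to leave one left vertex free of crossings. Without such a lemma, a ``Bouchet-style diamond-sum assembly'' is not known to preserve the crossing count.

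A second gap: you claim that re-adding the $r$ deleted edges ``produces a drawing with exactly $r$ crossings,'' but this requires the $r$ insertions to be pairwise compatible --- no two of them crossing each other or the same existing edge, since an edge participating in two crossings could be deleted to violate optimality. This must be verified in each construction, not assumed. Finally, for $K_{5,5}$ your one-sentence ``short combinatorial argument'' conceals a nontrivial case analysis over the possible completions of the local structure around the missing edge in a quadrangular embedding of $K_{5,5}-K_2$ in $S_2$; the paper devotes all of Section \ref{sec-k55} to this, exploiting orientability and manifold constraints on the twelve quadrilateral faces. None of these pieces is routine, so as written the proposal is a correct roadmap rather than a proof.
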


Kainen's lower bound and conjectures have direct analogues for nonorientable surfaces, but we defer the exact statements to Sections \ref{sec-kn-non} and \ref{sec-kmn-non}. In short, $K_7$ and $K_8$ are the only counterexamples to those formulas.

\section{Euler's formula and Kainen's lower bound}

For more background on topological graph theory, see Gross and Tucker \cite{GrossTucker} or Mohar and Thomassen \cite{MoharThomassen}. 

In this paper, we restrict our attention to embeddings and drawings of graphs in closed surfaces, where $S_g$ denotes the orientable surface of genus $g$, and $N_k$ denotes the nonorientable surface of genus $k$, i.e. the sphere with $k$ crosscaps. A \emph{drawing} of a graph $G$ in a surface $S$ is a locally injective map $D: G \to S$, where vertices do not intersect other vertices or the interiors of edges, and the number of \emph{crossings}, i.e., intersections of the interiors of two edges, is finite. The \emph{surface crossing number} $\CR_g(G)$ is the smallest number of crossings of any drawing of $G$ in the surface $S_g$. If a drawing has no crossings, then it is said to be an \emph{embedding}. An embedding $\phi: G \to S$ is \emph{cellular} if its \emph{faces}, i.e., the connected components of $S\setminus \phi(G)$, are homeomorphic to open disks. 

Given an arbitrary orientation of the edges $E(G)$, each edge $e \in E(G)$ induces two arcs $e^+$ and $e^-$ pointing in opposite directions. Let $E(G)^+$ denote the set of such arcs. A \emph{rotation} of a vertex is a cyclic permutation of the arcs leaving that vertex, and a \emph{rotation system} is an assignment of a rotation to each vertex. Up to orientation-preserving equivalence, rotation systems are in one-to-one correspondence with cellular embeddings in orientable surfaces: a cellular embedding induces a rotation system from the clockwise ordering of incident edges at each vertex, and the cellular embedding can be reconstructed from the rotation system via face-tracing.

Rotation systems can be generalized to possibly nonorientable surfaces by introducing an \emph{edge signature} $\lambda\colon E(G) \to \{-1, 1\}$. Edges with signature $-1$ are called \emph{twisted}, and traversing such an edge reverses the local orientation. The edges of signature $1$ are called \emph{normal}, and if every edge is normal, then we simply have an ordinary rotation system describing an embedding in an orientable surface. A rotation system describes a nonorientable embedding if there is a closed walk that traverses an odd number of edges of signature $-1$. 

Suppose we have a cellular embedding of a graph $G$ in a surface $S$. The Euler polyhedral formula states that
$$|V(G)|-|E(G)|+|F(G,\phi)| = \chi(S),$$
where $F(G,\phi)$ denotes the set of faces of the embedding, and $\chi(S)$ is the \emph{Euler characteristic} of $S$. For an orientable surface $S_g$, the Euler characteristic is $2-2g$, and for a nonorientable surface $N_k$, the Euler characteristic is $2-k$. The \emph{girth} of a graph is the length of its shortest cycle, and it can be applied to the Euler polyhedral formula to bound the number of edges in an embedded graph:

\begin{proposition}
Let $G$ be a simple, connected graph of girth $\ell < \infty$. If $G$ can be embedded in the surface $S$, then 
$$|E(G)| \leq \frac{\ell}{\ell-2}(|V(G)|-\chi(S)).$$
\end{proposition}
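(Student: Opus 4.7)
The plan is to combine Euler's polyhedral formula with a standard face-length count. First, I would reduce to the case of a cellular (2-cell) embedding. If the given embedding in $S$ is not cellular, then some face is not a disk; by cutting along a non-contractible simple closed curve inside such a face and capping with disks (or by cutting to separate distinct boundary components), one obtains an embedding of $G$ in a surface $S'$ with $\chi(S') \geq \chi(S)$ and strictly fewer non-disk faces. Iterating produces a cellular embedding of $G$ in some $S'$ with $\chi(S') \geq \chi(S)$. Since $\frac{\ell}{\ell-2}(|V(G)|-\chi(S))$ is non-increasing in $\chi(S)$, proving the bound for the cellular embedding in $S'$ will imply it for the original embedding in $S$.

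Assume now that the embedding is cellular, in a surface I continue to call $S$. Euler's polyhedral formula gives $|V(G)|-|E(G)|+|F|=\chi(S)$, where $|F|$ is the number of faces. Next, I would double-count edge--face incidences: summing over faces, the total length of all face boundary walks equals $2|E(G)|$, since each edge contributes exactly two sides. The key ingredient is that every face boundary walk has length at least $\ell$. This is because each face is a disk, its boundary walk is a closed walk in $G$, and in a simple connected graph of finite girth such a walk must contain a cycle, which has length at least $\ell$ by definition.

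Combining these observations yields $\ell |F|\leq 2|E(G)|$, hence $|F|\leq 2|E(G)|/\ell$. Substituting into Euler's formula,
\[|V(G)|-\chi(S)=|E(G)|-|F|\geq |E(G)|(1-2/\ell)=|E(G)|(\ell-2)/\ell,\]
which rearranges to the desired inequality. I expect the main obstacle, though ultimately routine, to be justifying the face-length bound: in particular, ruling out the possibility that a face boundary walk pinches along bridges and traverses only tree edges. This edge case is handled by noting that connectedness of $G$ together with the presence of at least one cycle force any such pathological walk to incorporate a cycle of $G$, hence to have length at least $\ell$.
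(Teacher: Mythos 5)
Your proof is correct and follows essentially the same route as the paper's: reduce to a cellular embedding in a surface of no smaller Euler characteristic, bound each face length below by the girth via the observation that every face boundary walk must contain a cycle of $G$, and combine $2|E|\geq \ell|F|$ with Euler's formula. The only difference is that you spell out the reduction and the face-length justification in more detail than the paper does.
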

\begin{proof}
If the embedding is not cellular, then there exists an embedding in a surface of larger Euler characteristic, leading to an even stronger inequality. Since $G$ has a cycle, each face must contain a cycle and hence has length at least the girth $\ell$. Thus, $2|E| \geq \ell|F|$. Substituting this inequality into the Euler polyhedral equation yields:
$$\chi = |V|-|E|+|F| \leq |V|-|E|+\frac{2}{\ell}|E| = |V|+\frac{2-\ell}{\ell}|E|,$$
which can be rearranged into the desired upper bound on $|E|$. 
\end{proof}

How much the graph's actual edge count exceeds this value is a lower bound on the number of crossings of any drawing of $G$:

\begin{proposition}[Kainen's lower bound \cite{Kainen-CrossingLower}]
For a simple, connected graph $G$ of girth $\ell < \infty$, the crossing number of $G$ in the surface $S_g$ is at least
$$\CR_g(G) \geq \delta_g(G) := |E(G)| - \frac{\ell}{\ell-2}(|V(G)|-2+2g).$$
\end{proposition}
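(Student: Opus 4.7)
The plan is to reduce the claim directly to the previous proposition by the trick already hinted at in the text: given any drawing, deleting one edge from each crossing produces an embedding of a subgraph.

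First I would fix a drawing $D$ of $G$ in $S_g$ realizing $\CR_g(G)$ crossings. For each crossing, I would delete exactly one of the two crossing edges from $D$, choosing which edge to remove so as to keep the remaining subgraph $G' \subseteq G$ connected (in an optimal drawing one can always arrange that at least one of the two choices preserves connectivity; the case where every choice leaves $G'$ acyclic can be handled separately, since $|E(G')| \leq |V(G)|-1$ is already dominated by the target $\frac{\ell}{\ell-2}(|V(G)|-2+2g)$ when $|V(G)| \geq \ell$). By construction, what remains of $D$ is a crossing-free drawing of $G'$ in $S_g$, so $G'$ is embedded in $S_g$, and $|E(G')| \geq |E(G)| - \CR_g(G)$.

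Next I would apply the previous proposition to $G'$. Since $G'$ is a subgraph of $G$, every cycle of $G'$ is a cycle of $G$, so $\ell(G') \geq \ell$; and because the function $x \mapsto x/(x-2)$ is monotonically decreasing on $(2,\infty)$,
\[
|E(G')| \;\leq\; \frac{\ell(G')}{\ell(G')-2}\bigl(|V(G')| - \chi(S_g)\bigr) \;\leq\; \frac{\ell}{\ell-2}\bigl(|V(G)| - 2 + 2g\bigr),
\]
using $V(G')=V(G)$ and $\chi(S_g) = 2-2g$. Chaining this with $|E(G')| \geq |E(G)| - \CR_g(G)$ and rearranging yields the stated inequality.

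There is no genuine obstacle here: the substantive content is Kainen's one-line observation that a single edge-deletion resolves each crossing, together with the previous proposition. The only mild care needed is to verify the hypotheses of that proposition for $G'$—namely simplicity, connectivity, and the presence of a cycle—which is why I would make the edge deletions deliberately rather than arbitrarily, and treat the acyclic case by a direct computation.
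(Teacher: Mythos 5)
Your proposal is correct and follows essentially the same route as the paper: delete one edge from each crossing of an optimal drawing to obtain an embedded spanning subgraph, then invoke the preceding girth--Euler proposition (the paper dispatches the disconnected/acyclic degeneracy by adding deleted edges back crossing-free, while you handle it by choosing the deletions and by a direct count, and you additionally spell out the monotonicity of $\ell/(\ell-2)$ needed when the subgraph's girth exceeds $\ell$). These are only cosmetic differences in how the same technicalities are discharged.
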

\begin{proof}
In a drawing of $G$ in $S_g$ with $\CR_g(G)$ crossings, deleting one edge from each crossing yields a simple graph embedded in $S_g$. This graph cannot be disconnected or acyclic, otherwise one can add back at least one of those deleted edges without creating any crossings. 
\end{proof}

In this work, we compute surface crossing numbers of $K_n$, the \emph{complete graph} on $n$ vertices, and $K_{m,n}$, the \emph{complete bipartite graph} on $m$ \emph{left} vertices and $n$ \emph{right} vertices. In our figures, left and right vertices are colored black and white, respectively.

We define a \emph{Kainen drawing} to be a drawing of a graph $G$ where the number of crossings is exactly equal to Kainen's lower bound, showing that $\CR_g(G) = \delta_g(G)$. A \emph{Kainen subembedding} of $G$ is an embedding of a spanning subgraph $G'$ where each of the missing edges $E(G)\setminus E(G')$ can be sequentially added to the drawing using one additional crossing each to obtain a Kainen drawing. Like in the proof of Kainen's lower bound, a Kainen subembedding can be obtained from a Kainen drawing by deleting one edge from each crossing. Unless we state otherwise, we consider only Kainen drawings and subembeddings of the graphs $K_n$ and $K_{m,n}$ in the surfaces $S_{h(n)}$ and $S_{h(m,n)}$, respectively. 

It is often helpful to examine how many crossings an edge or neighborhood of a vertex participates in. The \emph{responsibility} of an edge is the number of crossings that it participates in, and the \emph{responsibility} of a vertex is the sum of the responsibilities of its incident edges. If a vertex has responsibility 0, we call that vertex a \emph{slacker}. In a Kainen drawing, edges cannot have responsibility more than 1, otherwise we would be able to delete it to obtain a drawing violating Kainen's lower bound. 

Kainen \cite{Kainen-CrossingLower} showed that the highest-genus surface where his lower bound is nonnegative for the graphs $K_n$ and $K_{m,n}$ is exactly $h(n)$ and $h(m,n)$, respectively. More explicitly, these lower bounds are:

\begin{proposition}
$$cr_{h(n)}(K_{n}) \geq \frac{(n-3)(n-4)}{2} \bmod{6}$$
and
$$cr_{h(m,n)}(K_{m,n}) \geq (m-2)(n-2) \bmod{4}.$$
\end{proposition}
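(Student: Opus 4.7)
The plan is to derive both bounds as direct computations starting from the preceding proposition (Kainen's lower bound). First I would specialize the quantity $\delta_g(G) = |E(G)| - \frac{\ell}{\ell-2}(|V(G)|-2+2g)$ to the two families by substituting the edge and vertex counts and the girths: $K_n$ has $|V|=n$, $|E|=\binom{n}{2}$, and girth $\ell=3$, giving $\frac{\ell}{\ell-2}=3$; while $K_{m,n}$ has $|V|=m+n$, $|E|=mn$, and girth $\ell=4$, giving $\frac{\ell}{\ell-2}=2$. After routine algebraic simplification these collapse to
\begin{align*}
\delta_g(K_n) &= \tfrac{(n-3)(n-4)}{2} - 6g, \\
\delta_g(K_{m,n}) &= (m-2)(n-2) - 4g,
\end{align*}
so in each case Kainen's lower bound is an affine function of $g$ with slope $-6$ (respectively $-4$).

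Next I would substitute $g=h(n)$ and $g=h(m,n)$ and convert the resulting floor expressions into residues. Because $(n-3)(n-4)$ is a product of two consecutive integers and hence even, I can rewrite $h(n) = \lfloor \tfrac{(n-3)(n-4)/2}{6}\rfloor$; then the elementary identity $x - 6\lfloor x/6 \rfloor = x \bmod 6$ applied with $x = (n-3)(n-4)/2$ gives the complete-graph formula. The bipartite case is structurally identical, using $y - 4\lfloor y/4\rfloor = y \bmod 4$ with $y=(m-2)(n-2)$. Combining these simplifications with the preceding proposition yields the two claimed lower bounds.

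There is no real obstacle here; the statement is essentially a bookkeeping consequence of Kainen's lower bound and the definitions of $h(n)$ and $h(m,n)$. The only points requiring any care are the parity observation for $(n-3)(n-4)$, which justifies pulling the factor of $2$ through the floor in the complete-graph case, and a note that the expressions are nonnegative, which is automatic since (as mentioned in the paragraph preceding the proposition) $h(n)$ and $h(m,n)$ are precisely the largest genera for which $\delta_g$ remains nonnegative.
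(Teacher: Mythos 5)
Your proposal is correct and follows essentially the same route as the paper: direct substitution of the edge counts, girths, and the definitions of $h(n)$ and $h(m,n)$ into Kainen's lower bound, followed by the identity $x - 6\lfloor x/6\rfloor = x \bmod 6$ (respectively with $4$ in place of $6$). The paper phrases the last step as bounding $t(n) < 6$ via $\lfloor x\rfloor > x-1$, but this is the same bookkeeping.
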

\begin{proof}
Let $t(n)$ denote Kainen's lower bound for $K_n$ in $S_{h(n)}$. By direct substitution, we have
\begin{align*}
t(n) &= \frac{n(n-1)}{2} - \left(3n-6+6\left\lfloor \frac{(n-3)(n-4)}{12}\right\rfloor\right) \\
     &= (n-3)(n-4)/2-6\left\lfloor \frac{(n-3)(n-4)/2}{6}\right\rfloor \\
     &< 6,
\end{align*}
where the inequality comes from the fact that $\lfloor x \rfloor > x-1$. Thus, $t(n) = t(n) \bmod 6 = (n-3)(n-4)/2 \bmod 6$. The complete bipartite case is calculated similarly. 
\end{proof}

Like in the proof of the Map Color Theorem, we can separate the complete graphs according to their residue modulo 12, where within each residue, Kainen's lower bound yields the same number of crossings. Using the notation above, we observe that
$$t(n) = \begin{cases}
0 & \text{if~} n \equiv 0, 3, 4, 7 \Mod{12} \\
1 & \text{if~} n \equiv 2, 5 \Mod{12} \\
3 & \text{if~} n \equiv 1, 6, 9, 10 \Mod{12} \\
4 & \text{if~} n \equiv 8, 11 \Mod{12}. \\
\end{cases}$$
Let $t(m,n)$ denote Kainen's lower bound for $K_{m,n}$ in $S_{h(m,n)}$. Then 
$$t(m,n) = \begin{cases}
0 & \text{if~} m \equiv 2 \Mod{4}, \text{~or~} n \equiv 2 \Mod{4}, \text{~or~} m,n \equiv 0 \Mod{2} \\
1 & \text{if~} m,n \equiv 1 \Mod{4} \text{~or~} m,n \equiv 3 \Mod{4} \\
2 & \text{if~} m \equiv 0 \Mod{4}, n \equiv 1 \Mod{2} \text{~or~} m \equiv 1 \Mod{2}, n \equiv 0 \Mod{4} \\
3 & \text{if~} m \equiv 1 \Mod{4}, n \equiv 3 \Mod{4} \text{~or~} m \equiv 3 \Mod{4}, n \equiv 1 \Mod{4}. \\
\end{cases}$$

Our main results, Theorems \ref{thm-kn} and \ref{thm-kmn}, show that these are equalities except for three exceptional graphs. Kainen subembeddings are found using the same tools used in the construction of genus embeddings of those same families of graphs: current graph constructions for $K_n$, and the diamond sum operation for $K_{m,n}$. 

We briefly discuss the exceptions to the general formula. Huneke \cite{Huneke-Minimum} showed that no simple graph on 9 vertices has a triangular embedding in $S_2$, ruling out the existence of any Kainen subembedding of $K_9$. Riskin \cite{Riskin-Genus2} gave an independent proof of the weaker fact that $\CR_2(K_9) > 3$ and found a drawing which uses 4 crossings. There are two complete bipartite counterexamples. It was already known (see, e.g., Kleitman \cite{Kleitman-K5n}) that the planar crossing number of $K_{3,5}$ is $4$. The remaining counterexample, $K_{5,5}$ in $S_2$, will be treated in Section \ref{sec-k55}. Mohar \emph{et al.} \cite{MPP-NearlyComplete} found a quadrilateral embedding of $K_{5,5}-K_2$ in $S_2$, but the missing edge cannot be drawn into their embedding using only one crossing. 

\section{Related work}

In the cases where $H(n) = h(n)$ and $H(m,n) = h(m,n)$, the predicted drawing is actually an embedding where the length of every face is equal to the girth of the graph. When $n \geq 3$, the complete graph $K_n$ has an orientable triangular embedding if and only if $n \equiv 0, 3, 4, 7 \Mod{12}$ (see, e.g., Ringel \cite{Ringel-MapColor}). Ringel \cite{Ringel-BipartiteOrientable} showed that the complete bipartite graph $K_{m,n}$ has a quadrangular embedding if and only if $(m-2)(n-2) \equiv 0 \Mod{4}$. Outside of these residues, we use only a few previously known constructions. One exception is when $n \equiv 5 \Mod{12}$: the triangular embeddings of $K_n - K_2$ presented in Youngs \cite{Youngs-3569} are also Kainen subembeddings of $K_n$. 

Kainen subembeddings of complete graphs are examples of \emph{minimal triangulations}, triangular embeddings of simple graphs on some surface with the fewest number of vertices possible. Minimal triangulations were studied by Jungerman and Ringel \cite{JungermanRingel-Minimal}, who gave an example of a minimal triangulation for each orientable surface. A Kainen drawing with more than one crossing can be seen as describing multiple minimal triangulations simultaneously, since there is a choice of which edge to delete from each crossing. In this sense, Kainen drawings with multiple crossings are usually much harder to find than arbitrary minimal triangulations, though to answer Kainen's conjecture for complete graphs, we only need to find the requisite drawings in a small subset of all orientable surfaces. 

A complete bipartite analogue of the minimal triangulations problem was considered by Magajna \emph{et al.} \cite{Magajna-MinimalOrdered}, who were the first to interpret Bouchet's \cite{Bouchet-Diamond} diamond sum operation in primal form. Their result was later strengthened to the genus of ``nearly complete'' bipartite graphs \cite{MPP-NearlyComplete, Mohar-Nonorientable, Lv-CompleteBipartite, Singh-Settling}, which are complete bipartite graphs with a matching deleted. 

Guy, Jenkyns, and Schaer \cite{Guy-ToroidalComplete, Guy-ToroidalCompleteBipartite} proved a variety of results concerning the surface crossing number of $K_n$ and $K_{m,n}$ in the torus. One such result is that the crossing number of $K_{3,n}$ in the torus is exactly $\lfloor (n-3)^2/12 \rfloor$. Their result was generalized by Richter and \v{S}ir\'a\v{n}~\cite{RichterSiran-K3n}, who calculated the surface crossing number of $K_{3,n}$ in every closed surface.

\section{Kainen drawings of $K_n$}

\subsection{Current graphs}

A \emph{current graph} consists of a graph $G$ equipped with an embedding $\phi\colon G \to S$ and an arc-labeling $\alpha\colon E(G)^+ \to \Gamma$ that satisfies $\alpha(e^+) = -\lambda(e)\alpha(e^-)$ for every edge $e \in E(G)$, where $\lambda$ is the edge signature of $\phi$. In this section, we only consider current graphs embedded in orientable surfaces, where the edge signature is $1$ for every edge. The group $\Gamma$ is called the \emph{current group}, and elements from $\Gamma$ are called \emph{currents}. In all of the current graphs described in the present work, $\Gamma$ is always a finite cyclic group $\mathbb{Z}_n$. We call each face boundary walk, which consists of a cyclic sequence of arcs $(e_1^\pm, e_2^\pm, \dotsc)$, a \emph{circuit}, and the \emph{log} of a circuit $(\alpha(e_1^\pm), \alpha(e_2^\pm), \dotsc)$ replaces each arc with its label. The \emph{index} of a current graph is the number of faces in the embedding, and it is required to divide the order of the current group. Our index $k$ current graphs satisfy the following properties: 

\begin{enumerate}
\item[(C1)] The circuits are labeled $[0], [1], \dotsc, [k-1]$.
\item[(C2)] The log of each circuit contains each nonzero element of $\Z_n$ exactly once.
\item[(C3)] If circuits $[a]$ and $[b]$ traverse the arcs $e^+$ and $e^-$, respectively, then $\alpha(e^+) \equiv b-a \Mod{k}$.   
\end{enumerate}

The \emph{derived embedding} of a current graph satisfying these properties is of a complete graph $K_n$, where the rotation at vertex $i \in \Z_n$ is found by taking the log of circuit $[i \bmod{k}]$ and adding $i$ to each element. Each vertex in the current graph induces a set of faces in the derived embedding. The number of such faces and their lengths can be calculated from the vertex's \emph{excess}, the sum of the incoming currents. The term ``current graph'' comes from the fact that most vertices satisfy \emph{Kirchhoff's current law} (KCL), i.e., have excess 0. Most of the families of current graphs in this paper contain three standard types of vertices:

\begin{enumerate}
\item[(V1)] Unlabeled vertex of degree 3, which has excess 0 (i.e., satisfies KCL).
\item[(V2)] Unlabeled vertex of degree 1, which has excess of order 2 in $\Gamma$.
\item[(V3)] Labeled vertex of degree $k$ (i.e., equal to the index of the graph), which is incident with each circuit, and whose excess generates the index $k$ subgroup of $\Gamma$. 
\end{enumerate}

Later on, we describe a few other types of vertices for use in specific constructions. One can show that a (V1) vertex generates triangular faces, a (V2) vertex generates 2-sided faces (which can be suppressed by identifying the two parallel edges), and a (V3) vertex generates one Hamiltonian face. (V3) vertices are called \emph{vortices} and are labeled by a letter, and we subdivide each resulting Hamiltonian face with a new vertex with the same letter. We call these subdivision vertices \emph{lettered vertices}, while the ones from the original derived embedding are called \emph{numbered vertices}. 

\subsection{General approach}\label{sec-general}

Many of the forthcoming constructions are extensions of previously known ideas for finding minimal triangulations \cite{JungermanRingel-Minimal, Sun-Minimum}. To illustrate some of the additional difficulties in finding a Kainen subembedding, we present a result due to Jonathan\ L.\ Gross (personal communication). Perhaps one of the simplest cases in the genus of $K_n$ is ``Case 10,'' i.e., when $n \equiv 10 \Mod{12}$. Finding genus embeddings of $K_{12s+10}$ starts with a simple family of index 1 current graphs. The smallest such current graph, shown in Figure \ref{fig-k10current}(a), generates a triangular embedding of $K_{10}-K_3$ in $S_3$. However, it is not a Kainen subembedding because none of the missing edges $(x,y)$, $(y, z)$, or $(x, z)$ can be added to the embedding using only one crossing. One can infer this from the current graph alone: the three vortices are at distance 2 from one another. The modification shown in Figure \ref{fig-k10current}(b) replaces the edges $(0,2)$, $(2, 6)$, and $(6,0)$ with $(x, y)$, $(y, z)$, $(z, x)$. The three edges that are now missing can be introduced elsewhere in the embedding with one crossing each. 

\begin{figure}[tbp]
\centering
    \begin{subfigure}[b]{0.38\textwidth}
        \centering
        \includegraphics[scale=1]{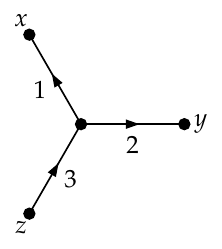}
        \caption{}
    \end{subfigure}
    \begin{subfigure}[b]{0.61\textwidth}
        \centering
        \includegraphics[scale=0.9]{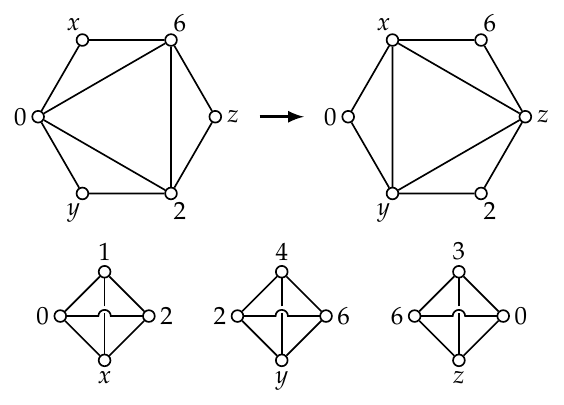}
        \caption{}
    \end{subfigure}
\caption{An index 1 current graph with group $\mathbb{Z}_{7}$ (a), and modifications to its derived embedding.}
\label{fig-k10current}
\end{figure}

Gross further observed that this approach does not seem to generalize, since vortices in larger members of this family of current graphs are even farther apart. What was an easy case in the original Map Color Theorem and minimum triangulations problem turns out to be the most difficult residue to solve in Kainen's conjecture. Nonetheless, this \emph{ad hoc} example illustrates a general theme in finding Kainen subembeddings. Initially, the missing edges of a derived embedding are between lettered vertices. In general, such an embedding will not be a Kainen subembedding, but a promising approach would be to modify the embedding so that some of the missing edges are between numbered vertices, instead. 

The derived embeddings of index 1 current graphs are often not conducive to these modifications, since vortices of type (V3) cannot be adjacent (except in trivial cases). However, it might be useful to ``lift'' to a higher index: let $k$, $\ell$, and $n$ be distinct positive integers such that $k$ divides $\ell$ and $\ell$ divides $n$. Given an index $k$ current graph with current group $\mathbb{Z}_n$, it can be interpreted as a symmetric index $\ell$ current graph. This suggests the possibility of other current graphs of the same index $\ell$ with more desirable properties, such as having adjacent vortices. Here, Case 10 hits another unfortunate roadblock: the current graphs have current group $\mathbb{Z}_{12s+7}$, whose order is prime infinitely often by Dirichlet's theorem. Our solution for this residue will ultimately follow a completely different approach.

\subsection{A note about notation}

In order to describe an infinite family of current graphs, we use standard repeating structures known as ``ladders.'' In every ladder in this paper, the ``rungs,'' i.e., the vertical edges, always alternate in direction, have currents that form an arithmetic sequence of step size $3$ or $-3$ when read from left to right, and only contain vertices that satisfy Kirchhoff's current law. For index 1 and 2 current graphs, the only other edges are drawn horizontally, and their currents can be calculated using KCL. The rotations on the vertices simply repeat for index 1 current graphs, but for index 2 current graphs, they form a checkerboard pattern. For index 3 current graphs, the rungs alternate between a ``simple'' rung and a ``ring-shaped'' rung. The currents on the curved edges of the latter match that of the horizontal edges above and below it. Figures \ref{fig-index2ladder} and \ref{fig-index3ladder} show two examples of infinite families of ladders. 

\begin{figure}[tbp]
\centering
\includegraphics[scale=0.9]{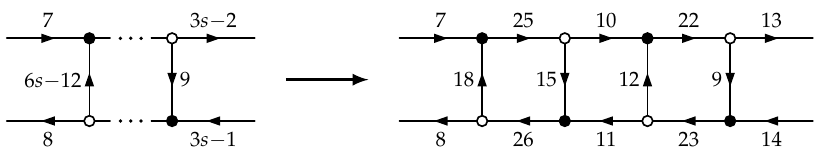}
\caption{An index 2 ladder and its specification for $s = 5$.}
\label{fig-index2ladder}
\end{figure}

\begin{figure}[tbp]
\centering
\includegraphics[width=\textwidth]{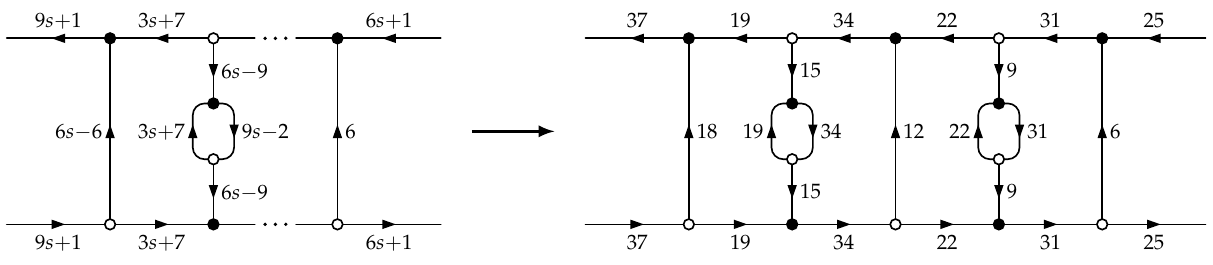}
\caption{An index 3 ladder and its specification for $s = 4$.}
\label{fig-index3ladder}
\end{figure}

Many of our constructions use sequences of edge flips to rearrange the locations of edges. If we write a sequence of edges like $$(u_1, v_1) \to (u_2, v_2) \to (u_3, v_3) \to \dotsc \to (u_i, v_i),$$ then we perform the following operations as shown in Figure \ref{fig-generalflips}: delete $(u_1, v_1)$, $(u_2, v_2)$, $\dotsc$, $(u_{i-1}, v_{i-1})$ and add $(u_2, v_2)$, $(u_3, v_3)$, $\dotsc$, $(u_i, v_i)$, respectively, in the resulting quadrilateral faces. 

\begin{figure}[tbp]
\centering
\includegraphics[width=\textwidth]{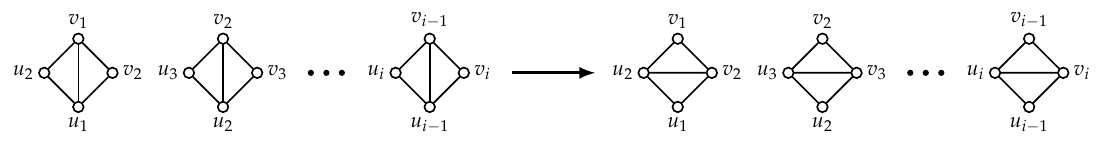}
\caption{A sequence of edge flips.}
\label{fig-generalflips}
\end{figure}

\subsection{The constructions}

As mentioned earlier, we solve each residue $n \bmod{12}$ individually. The residues are ordered by increasing number of crossings, and then in increasing order of complexity. 

When the complete graph $K_n$ has a triangular embedding, that embedding is automatically a Kainen drawing with 0 crossings:

\begin{lemma}
When $n \equiv 0, 3, 4, 7 \Mod{12}$, $n \geq 3$, the crossing number of $K_n$ in the surface of genus $h(n)$ is 0.
\label{lem-case0347}
\end{lemma}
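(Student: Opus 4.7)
The plan is to show that for the four residues in question, the conjectured crossing number of zero is achieved by an honest embedding (no crossings), so there is essentially nothing to do beyond citing the existence of the appropriate triangular embedding and verifying that the genus of that embedding matches $h(n)$.

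First I would observe that when $n \equiv 0, 3, 4, 7 \pmod{12}$, the product $(n-3)(n-4)$ is divisible by $12$, so the ceiling in the definition of $H(n)$ and the floor in the definition of $h(n)$ agree, giving $H(n) = h(n) = (n-3)(n-4)/12$. In particular, Kainen's lower bound $t(n) = (n-3)(n-4)/2 \bmod 6$ evaluates to $0$ in these residues (as recorded in the case analysis preceding the lemma), so it suffices to exhibit a crossing-free drawing, i.e., an embedding of $K_n$ in $S_{h(n)}$.

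Next I would appeal to the Map Color Theorem (Ringel--Youngs, as cited via \cite{Ringel-MapColor}), which provides a triangular embedding of $K_n$ in $S_{H(n)}$ precisely when $n \equiv 0, 3, 4, 7 \pmod{12}$ (with $n \geq 3$). Since $H(n) = h(n)$ in these residues, this embedding lives in the correct surface, and being an embedding it has zero crossings. Combined with the trivial lower bound $\CR_{h(n)}(K_n) \geq 0$, this gives $\CR_{h(n)}(K_n) = 0$, which is exactly the value $t(n) = 0$ predicted by Kainen's lower bound.

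There is no genuine obstacle here, since the heavy lifting is done by the Map Color Theorem; the only thing to check is the modular arithmetic identifying $H(n)$ with $h(n)$ and confirming that $t(n) = 0$ in these four residues. I would therefore present the proof as a one-line reduction to \cite{Ringel-MapColor}, with a brief remark that this forms the base case for the four ``easy'' residues and that the real work of the paper lies in the remaining eight residues modulo $12$.
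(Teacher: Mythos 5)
Your proposal matches the paper's approach exactly: the paper also observes that in these residues a triangular embedding of $K_n$ exists in $S_{H(n)} = S_{h(n)}$ (citing Ringel's Map Color Theorem constructions), and such an embedding is automatically a Kainen drawing with zero crossings. The extra modular-arithmetic check that $H(n)=h(n)$ and $t(n)=0$ is correct and is implicit in the paper's surrounding discussion.
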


For the simplest known constructions, see Sections 2.3, 6.1, and 9.2 of Ringel \cite{Ringel-MapColor} and Sun \cite{Sun-K12s}. 

\begin{lemma}
When $n = 12s+5$, $s \geq 0$, the surface crossing number of $K_n$ in the surface of genus $h(n)$ is $1$.
\label{lem-case5}
\end{lemma}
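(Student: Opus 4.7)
The Kainen lower bound (Proposition~2.3) gives $\CR_{h(n)}(K_n) \geq t(n) = 1$ when $n \equiv 5 \Mod{12}$, so it suffices to exhibit a drawing of $K_n$ in $S_{h(n)}$ with exactly one crossing. The plan is to take Youngs' triangular embedding \cite{Youngs-3569} of $K_n - K_2$ in $S_{h(n)}$, as already flagged in Section~3, and verify that this embedding is a Kainen subembedding, i.e., that the single missing edge can be added back with exactly one crossing.

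Let $(x,y)$ be the missing edge in Youngs' triangular embedding. Adding $(x,y)$ with exactly one crossing is possible if and only if there exists an edge $(a,b)$ of the embedding whose two incident triangular faces are $\{a,b,x\}$ and $\{a,b,y\}$; one then draws $(x,y)$ through the quadrilateral union of these two triangles, crossing only $(a,b)$. Thus the proof reduces to verifying this shared-edge property for Youngs' construction. I would inspect Youngs' index-1 current graph, which has current group $\mathbb{Z}_{12s+3}$ and two type-(V3) vortices labeled $x$ and $y$, and read off from the logs of the vortex circuits the triangles incident with each of $x$ and $y$. The required pair $(a,b)$ would come from identifying two numbered vertices that occur as consecutive entries in both the $x$-circuit and the $y$-circuit, appropriately shifted by the vortex labels.

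The main obstacle is carrying out this verification uniformly for every $s \geq 0$. For $s=0$ this reduces to $\CR_0(K_5) = 1$: place $1,2,3$ as the outer triangle in the plane, $4$ inside, and $5$ outside, yielding a triangulation of $K_5 - \{(4,5)\}$ in which each edge of the outer triangle bounds two faces whose third vertices are $4$ and $5$, so $(4,5)$ can be added with one crossing. For $s \geq 1$, the shared-edge property is local to the two vortex neighborhoods in Youngs' current graph, and because his construction has a regular form near the vortices, the same local pattern gives a valid pair $(a,b)$ for every $s$. Combined with the matching lower bound, this yields $\CR_{h(n)}(K_n) = 1$ for all $n \equiv 5 \Mod{12}$.
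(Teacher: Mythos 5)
Your proposal is correct and takes essentially the same route as the paper: the paper's entire proof is that Youngs' triangular embeddings of $K_{12s+5}-K_2$ are Kainen subembeddings ``since the two vortices are adjacent.'' That adjacency is precisely the uniform structural fact you were reaching for with your ``regular form near the vortices'' remark---an edge of current $c$ joining the two vortices forces the rotation at every numbered vertex $i$ to read $\dots\, x\;(i+c)\;y\,\dots$, producing the two triangles $[i,x,i+c]$ and $[i,i+c,y]$ and hence your required pair $(a,b)=(i,i+c)$ for every $s$.
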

\begin{proof}
The triangular embeddings of $K_{12s+5}-K_2$ constructed in Youngs \cite{Youngs-3569} (see also Section 9.2 of Ringel \cite{Ringel-MapColor}) are Kainen subembeddings of $K_{12s+5}$, since the two vortices are adjacent.
\end{proof}

Triangular embeddings of $K_{12s+2}-K_2$ have been found using index 2 current graphs \cite{RingelYoungs-German, Jungerman-KnK2, Sun-Index2}. As a consequence of a ``global'' Kirchhoff's current law, the vortices cannot be adjacent in such a current graph. Lifting to index 4 is a potential workaround, but as seen in Pengelley and Jungerman \cite{Pengelley-Index4} and Korzhik \cite{Korzhik-Index4}, it is difficult to find infinite families of current graphs of such a high index. Our general solution consists of index 2 current graphs, with additional edge flips that bring the lettered vertices closer together in the rotation of some vertex. 

\begin{lemma}
When $n = 12s+2$, $s \geq 1$, the surface crossing number of $K_n$ in the surface of genus $h(n)$ is $1$.
\label{lem-case2}
\end{lemma}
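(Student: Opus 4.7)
The plan is to start from the known triangular embeddings of $K_{12s+2} - K_2$ in $S_{h(12s+2)}$ obtained from index~2 current graphs \cite{RingelYoungs-German, Jungerman-KnK2, Sun-Index2}, where the missing edge is $(x,y)$ joining the two lettered vertices coming from the two vortices. If we could add $(x,y)$ using one crossing, we would meet Kainen's lower bound of $1$ exactly. The obstruction is that the global form of Kirchhoff's current law for an index~2 current graph places the two vortices in distinct circuits, and in large members of the family these vortices sit far apart in the current graph; consequently $x$ and $y$ are typically too far apart in the derived rotation system to allow a one-crossing insertion.

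My strategy is to modify the derived embedding by a short, localized sequence of edge flips of the form in Figure \ref{fig-generalflips} so that some numbered vertex $v$ ends up with a rotation of the form $(\dots, x, a, y, \dots)$. When such a configuration is achieved, the two triangular faces $vxa$ and $vay$ share the edge $va$, and the missing edge $(x,y)$ can be routed from $x$ through $vxa$, across $va$ once, and into $vay$ to reach $y$. The resulting drawing has exactly one crossing, matching Kainen's bound.

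Concretely, I would work inside the index~2 ladder structure of Figure \ref{fig-index2ladder} (or the Sun \cite{Sun-Index2} variant). First, I would read off from the current graph the local rotations at $x$, $y$, and at a numbered vertex $v$ chosen close to one of the vortices in the current graph. Then, exploiting the arithmetic progression of the rung currents, I would design a bounded-length chain of flips that ``walks'' one of the lettered vertices through the rotation of $v$ until it lands exactly two positions away from the other. The number of flips and their endpoints must be bounded independently of $s$, so that for all $s \geq 1$ the construction affects only a constant-size window of the ladder, leaving the rest of the triangulation intact. One also has to verify that the flips do not create parallel edges; this reduces to checking that a handful of explicit differences in $\Z_{12s+2}$ are nonzero, which is routine because these differences are small constants independent of $s$.

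The main obstacle I expect is designing a flip sequence that is simultaneously (i) uniform across the infinite family of current graphs, (ii) confined to a neighborhood small enough that it does not destroy triangularity outside its window, and (iii) arranged so that after the flips the unique missing edge is still $(x,y)$ rather than some shuffled pair. A few small initial cases, most likely $s = 1$ (that is, $K_{14}$) and possibly $s = 2$, are unlikely to fit the uniform pattern and will need to be handled by explicit constructions or ad hoc current graphs, in the spirit of the Case~10 example discussed in Section~\ref{sec-general}.
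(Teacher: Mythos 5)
Your proposal follows essentially the same route as the paper: for $s \geq 2$ the paper uses index 2 current graphs whose derived rotation at vertex $0$ already places $x$ and $y$ three positions apart, and a short cyclic flip sequence (one variant for even $s$, one for odd $s$) permutes three edges so that $(x,y)$ can be drawn across a single edge, while the $s=1$ case is handled separately by lifting to an index 4 current graph in which the two vortices are adjacent. The only content your plan defers --- the explicit current-graph families and the uniform bounded flip cycles, including the fact that a \emph{cyclic} flip sequence leaves $(x,y)$ as the unique missing edge --- is exactly what the paper supplies in Figures \ref{fig-case2-s1}, \ref{fig-current-c2-even}, \ref{fig-current-c2-odd}, and \ref{fig-case2-flips}.
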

\begin{proof}
For $s = 1$, the index 4 current graph in Figure \ref{fig-case2-s1} generates a triangular embedding of $K_{14}-K_2$. Since the two vortices are adjacent, this is a Kainen subembedding of $K_{14}$. For $s \geq 2$, consider the current graphs in Figures \ref{fig-current-c2-even} and \ref{fig-current-c2-odd}. In the derived embeddings, the rotation at vertex $0$ is of the form
$$\begin{array}{rlllllllllllllllllllllllllllll}
0. & \dots & x & u & 12s-3 & y \dots,
\end{array}$$
where $u = 6s-1$ or $1$ if $s$ is even or odd, respectively. After applying the sequence of edge flips 
$$(12s-3, x) \to (6s-4, 6s-2) \to (0, u) \to (12s-3, x)$$
for even $s$ and 
$$(12s-3, x) \to (12s-6, 12s-4) \to (0, u) \to (12s-3, x)$$ 
for odd $s$, the edge $(x,y)$ can be drawn with 1 crossing, as seen in Figure \ref{fig-case2-flips}.
\end{proof}

\begin{figure}[tbp]
\centering
\includegraphics[scale=0.9]{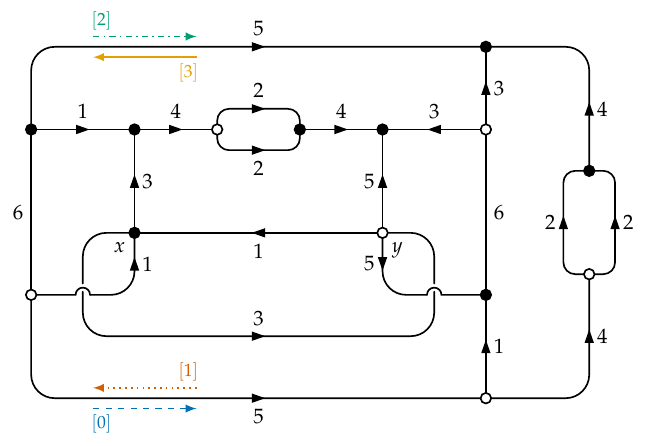}
\caption{Index 4 current graph with group $\mathbb{Z}_{12}$.}
\label{fig-case2-s1}
\end{figure}

\begin{figure}[tbp]
\centering
\includegraphics[scale=0.9]{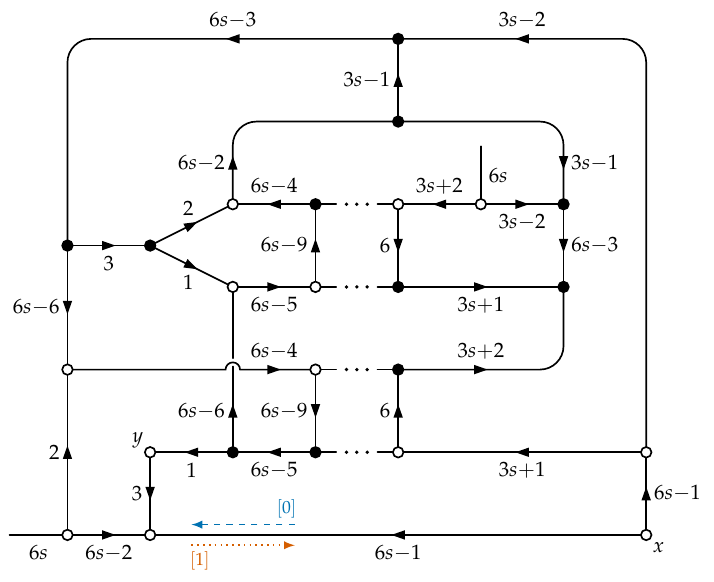}
\caption{Index 2 current graphs with current group $\mathbb{Z}_{12s}$, even $s \geq 2$.}
\label{fig-current-c2-even}
\end{figure}

\begin{figure}[tbp]
\centering
\includegraphics[scale=0.9]{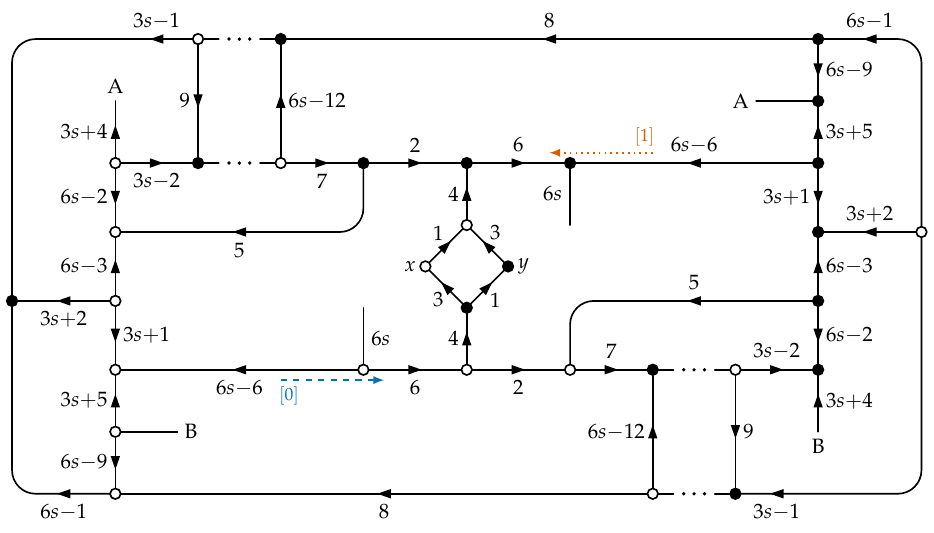}
\caption{Index 2 current graphs with current group $\mathbb{Z}_{12s}$, odd $s \geq 3$.}
\label{fig-current-c2-odd}
\end{figure}

\begin{figure}[tbp]
\centering
\includegraphics[scale=0.9]{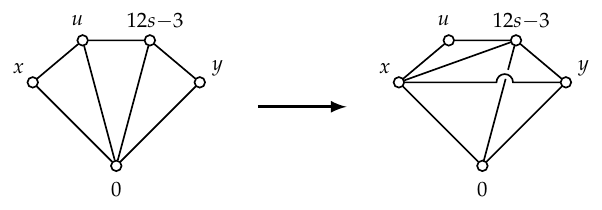}
\caption{Modifications for drawing $K_{12s+2}$, $s \geq 2$, with one crossing.}
\label{fig-case2-flips}
\end{figure}

For $n \equiv 6, 9 \Mod{12}$, we find two new families of index 3 current graphs that generate triangular embeddings of $K_{12s+6}-K_3$ and $K_{12s+9}-K_3$, for all $s \geq 2$. Kainen subembeddings are then constructed using Gross's method \cite{Gross-Case6} for finding triangular embeddings of $K_n-K_{1,3}$.

\begin{lemma}
When $n \equiv 6, 9 \Mod{12}$, $n \neq 9$, the surface crossing number of $K_n$ in the surface of genus $h(n)$ is 3. The surface crossing number of $K_9$ in $S_2$ is 4.
\label{lem-case69}
\end{lemma}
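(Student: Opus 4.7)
The plan splits into three parts. First, for the exceptional case $K_9$ in $S_2$, realizing $t(9) = 3$ would require a Kainen subembedding and hence a triangular embedding of a simple graph on $9$ vertices in $S_2$; Huneke \cite{Huneke-Minimum} ruled out exactly this, so $\CR_2(K_9) \geq 4$, and Riskin's \cite{Riskin-Genus2} drawing with $4$ crossings supplies the matching upper bound.

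Second, for $n = 12s+6$ or $n = 12s+9$ with $s \geq 2$, I would build two new infinite families of index $3$ current graphs in ladder form with current groups $\Z_n$ and three vortices, chosen so that the derived embeddings are triangular embeddings of $K_n - K_3$ on $S_{h(n)}$. Verification amounts to checking Kirchhoff's current law at each nonvortex vertex, the single-covering property (C2) on each of the three circuit logs, and that the vortex excesses generate the index $3$ subgroup of $\Z_n$. From each such embedding I then apply Gross's method \cite{Gross-Case6} for passing from a $K_n - K_3$ triangulation to a $K_n - K_{1,3}$ triangulation via a short sequence of edge flips, arranged so that the three leaves of the resulting star sit in faces adjacent to the center. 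In the resulting $K_n - K_{1,3}$ embedding, each missing star edge can be drawn across a single face with exactly one crossing, producing a Kainen drawing of $K_n$ with $t(n) = 3$ crossings.

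Third, the small cases $K_6$ in $S_0$, $K_{18}$ in $S_{17}$, and $K_{21}$ in $S_{25}$ fall outside the $s \geq 2$ range of the ladders and must be handled separately, via the classical three-crossing drawing for $K_6$ and small ad hoc current graphs or explicit drawings for $n = 18$ and $n = 21$. The main obstacle is the interplay between the current graph design and Gross's star conversion: the ladders must be tuned so that after the edge flips the three leaves of the $K_{1,3}$ land in positions where each missing edge truly requires only one crossing, with no edge accumulating responsibility greater than one. As the Case 10 discussion in Section \ref{sec-general} illustrates, an arbitrary triangular embedding of $K_n - K_3$ is generically not a Kainen subembedding, and forcing the post-flip neighborhood of the center vertex to be close to each leaf is a genuine constraint that shapes the entire construction.
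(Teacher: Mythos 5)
Your outline follows the paper's proof almost exactly: Huneke's nonexistence result plus Riskin's drawing for $K_9$, two new families of index~3 current graphs yielding triangular embeddings of $K_{12s+6}-K_3$ and $K_{12s+9}-K_3$ followed by Gross-style edge flips, and separate treatment of the small cases $K_6$, $K_{18}$ (which the paper handles via a known triangular embedding of $K_{18}-3K_2$ rather than the general family), and $K_{21}$. The only substantive deviations are that the paper's flip sequence actually leaves a missing $K_2\cup P_3$ rather than a $K_{1,3}$ --- the edge $(x,z)$ is routed across $(1,y)$ directly instead of being recovered by a third flip, which the post-flip rotation at vertex $1$ would not permit --- and, as you yourself acknowledge, the entire burden of the lemma lies in exhibiting the ladder families and flip data, which your sketch asserts rather than constructs.
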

\begin{proof}
Harary and Hill \cite{HararyHill} and Riskin \cite{Riskin-Genus2} showed that the crossing numbers of $K_6$ in $S_0$ and $K_9$ in $S_2$ are 3 and 4, respectively. The triangular embedding of $K_{18}-3K_2$ in Sun \cite{Sun-Index2} is a Kainen subembedding of $K_{18}$. 

For all larger values of $n$, we consider the index 3 current graphs in Figures \ref{fig-case6gen}, \ref{fig-case9s1}, and \ref{fig-case9gen}. In all of their derived embeddings, the rotation at vertex $1$ is of the form 
$$\begin{array}{rlllllllllllllllllllllllllllll}
1. & \dots & x & u_1 & y & u_2 & z & \dots
\end{array}$$
By flipping the edges $(1, u_1) \to (x, y)$ and $(1, u_2) \to (y, z)$, the edge $(x, z)$ can be routed across the edge $(1, y)$. As depicted in Figure \ref{fig-case69flips}, the missing edges $(1, u_1)$ and $(1, u_2)$ can be reintroduced by having them cross over the edges $(v_1, v_2)$ and $(v_3, v_4)$, respectively, where the values $(u_1, u_2, v_1, v_2, v_3, v_4)$ are
\begin{itemize}
\item $(0, 2, 6, 4, 3, 12s)$, for $n = 12s+6$, $s \geq 2$. 
\item $(3, 8, 9, 12, 16, 11)$, for $n = 21$.
\item $(9s+3, 6s+5, 3s, 12s+4, 6s+6, 0)$, for $n = 12s+9$, $s \geq 2$. 
\end{itemize}
\end{proof}

\begin{figure}[tbp]
\centering
\includegraphics[scale=0.9]{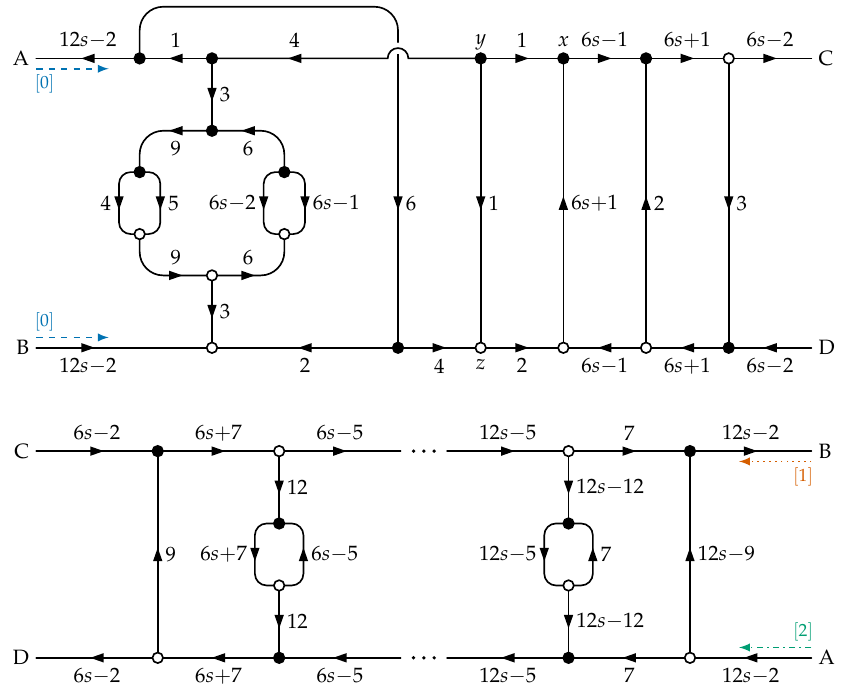}
\caption{A family of index 3 current graphs with current group $\mathbb{Z}_{12s+3}$.}
\label{fig-case6gen}
\end{figure}

\begin{figure}[tbp]
\centering
\includegraphics[scale=0.9]{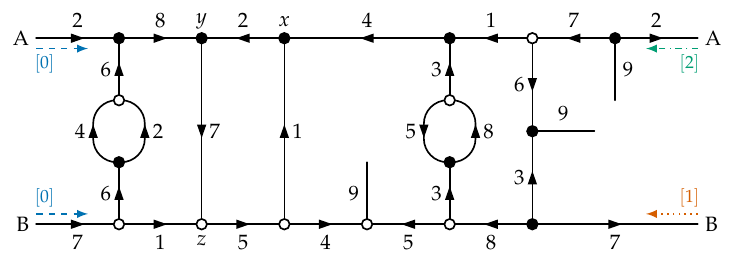}
\caption{Index 3 current graph with current group $\mathbb{Z}_{18}$.}
\label{fig-case9s1}
\end{figure}

\begin{figure}[tbp]
\centering
\includegraphics[scale=0.9]{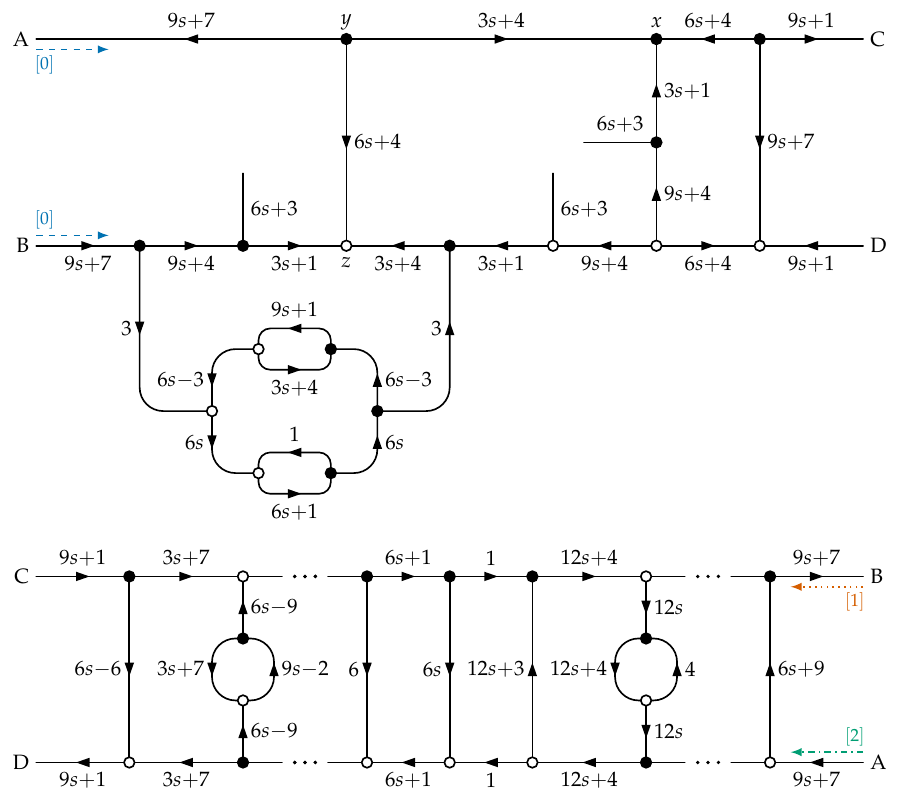}
\caption{Index 3 current graphs with current group $\mathbb{Z}_{12s+6}$.}
\label{fig-case9gen}
\end{figure}

\begin{figure}[tbp]
\centering
\includegraphics[scale=0.9]{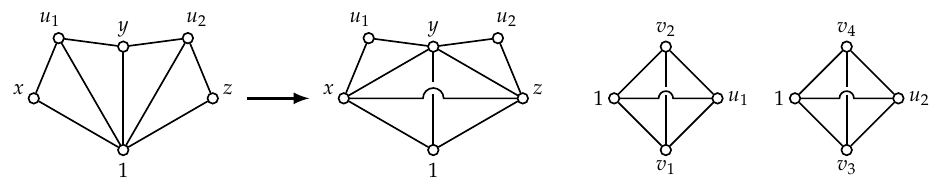}
\caption{Edge flips and crossings for $n \equiv 6, 9 \Mod{12}$.}
\label{fig-case69flips}
\end{figure}

There are several known families of index 1 current graphs (e.g., Sun \cite{Sun-FaceDist}) that generate triangular embeddings of $K_{12s+1}-K_3$, $s \geq 2$. Like in Gross's observation about the $n \equiv 10 \Mod{12}$ case, the vortices are too far apart, so we lift to index 2. Unfortunately, we encounter the same issue as in Lemma \ref{lem-case2}, where the vortices cannot be as close together as we had in Lemma \ref{lem-case69}. The workaround similarly uses further edge flips. 

\begin{lemma}
When $n = 12s+1$, $s \geq 1$, the surface crossing number of $K_n$ in the surface of genus $h(n)$ is 3.
\label{lem-case1}
\end{lemma}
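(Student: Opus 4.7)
Kainen's lower bound already gives $\CR_{h(12s+1)}(K_{12s+1}) \geq 3$, so it suffices to exhibit a Kainen subembedding: a triangular embedding of $K_{12s+1}-X$ on some three-edge set $X$, each edge of which can be reintroduced at the cost of exactly one crossing.

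For the base case $s=1$ (so $n=13$), the plan is to handle the construction explicitly with a small low-index current graph, analogous in spirit to the index-4 graph used for $K_{14}$ in Lemma \ref{lem-case2}. Face-tracing should directly yield a triangular embedding of $K_{13}-K_3$ in which the three lettered vertices $x,y,z$ sit close enough together in some common rotation that each of the missing edges $(x,y),(y,z),(x,z)$ can be drawn with one crossing.

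For $s \geq 2$, I would start from the index-1 current graphs of Sun \cite{Sun-FaceDist}, which have three vortices and generate triangular embeddings of $K_{12s+1}-K_3$ whose three missing edges join the three lettered vertices. As pointed out just before the lemma, the ``global'' Kirchhoff-type constraint that applies in Lemma \ref{lem-case2} forces the three vortices to be pairwise nonadjacent in the current graph, so in the raw derived embedding $x,y,z$ are too far apart for any missing edge to be inserted with a single crossing. To work around this, I would reinterpret the family as symmetric index-2 current graphs and design a matching family of index-2 ladders with the same three vortices, then apply a uniform sequence of edge flips of the type described in Section \ref{sec-general} so that the rotation at some numbered vertex $v$ takes the form
$$\cdots \; x \; u_1 \; y \; u_2 \; z \; \cdots.$$
From this configuration, following exactly the recipe of Lemma \ref{lem-case69}, the missing edge $(x,z)$ is drawn across $(v,y)$ with one crossing, and the two edges $(v,u_1)$ and $(v,u_2)$ displaced by the flip sequence are each reintroduced with one additional crossing across suitable nearby edges of the derived embedding, producing exactly three crossings in total.

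The main obstacle will be the same one encountered in Lemma \ref{lem-case2}: in the index-2 setting the three vortices cannot be placed as close together as in the index-3 family of Lemma \ref{lem-case69}, so the ladder family and the flip sequence must be chosen carefully enough that the three reintroduced edges do not share any crossing edge and do not collide with one another, uniformly in $s$. Because the index-2 ladders repeat a fixed local pattern, this reduces to a finite, local verification once the template is fixed, but I would expect that a few of the smallest values of $s$ will need to be handled separately, much as the $s=1$ case of Lemma \ref{lem-case2} required its own current graph.
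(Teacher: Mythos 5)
Your overall strategy matches the paper's: the lower bound comes from Kainen's inequality, $K_{13}$ is a base case, and for larger $s$ one lifts the known index~1 current graphs for $K_{12s+1}-K_3$ to index~2 and repairs the derived embedding with edge flips. (For $n=13$ the paper simply observes that Jungerman's triangular embedding of $K_{13}-K_3$ is already a Kainen subembedding, so no new current graph is needed there.) But there is a genuine gap in the step you lean on most heavily: you propose to flip edges until some rotation reads $\cdots\, x\, u_1\, y\, u_2\, z\, \cdots$ and then ``follow exactly the recipe of Lemma~\ref{lem-case69}.'' The paper explicitly states that this configuration is \emph{not} attainable in the index~2 setting --- the same obstruction as in Lemma~\ref{lem-case2} prevents the vortices from being packed that tightly --- and you yourself concede this in your final paragraph, which leaves your plan internally inconsistent. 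The construction the paper actually carries out works with the weaker configuration $\cdots\, x\, u_1\, y\, u_2\, u_3\, z\, \cdots$ (two numbered vertices between $y$ and $z$), uses a longer flip sequence including a cyclic chain $(0,u_2)\to(u_3,y)\to(v_1,v_2)\to(0,u_2)$ that returns one edge to its slot while repositioning others, and ends up reintroducing the edge set $\{(x,z),(0,u_1),(0,u_3)\}$ rather than the set your recipe would produce. So the ``recipe of Lemma~\ref{lem-case69}'' does not transfer verbatim, and that is precisely the difficulty this residue presents.

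Beyond that structural issue, the proposal defers all of the actual content --- the explicit index~2 ladder families (one for odd $s$, one for even $s$, plus a separate treatment of $s=2$), the specific flip sequences, and the verification that the three crossing edges are pairwise disjoint and available --- to ``a finite, local verification once the template is fixed.'' That verification is the entire proof here; without exhibiting the current graphs and checking that the designated quadrilaterals exist after each flip, the argument is a plausible outline rather than a proof. You correctly anticipate that small $s$ will need special handling, but the plan as written would not compile into a complete argument for any single value of $s$.
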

\begin{proof}
Jungerman's \cite{Jungerman-K18} triangular embedding of $K_{13}-K_3$ is a Kainen subembedding of $K_{13}$. Consider the index 2 current graphs in Figures \ref{fig-case1-s2}, \ref{fig-case1-general-odd}, and \ref{fig-case1-general-even}. For $s = 2$, we start with the sequence of edge flips
$$(0, 15) \to (5, z) \to (12, 16) \to (0, 3) \to (x, y).$$
In the resulting embedding, we can apply one more edge flip $(0, 5) \to (y, z)$. Now, the missing edges $(x, z)$, $(0, 15)$, and $(0, 5)$ can be drawn into the embedding using the three crossings in Figure \ref{fig-case1-mod}(a). 

The general case follows a similar pattern. The rotation at vertex $0$ is of the form
$$\begin{array}{rlllllllllllllllllllllllllllll}
0. & \dots & x & u_1 & y & u_2 & u_3 & z & \dots
\end{array}$$
where $(u_1, u_2, u_3) = (3, 6s+3, 6s+7)$ or $(6s+1, 6s-1, 12s-5)$ for odd or even $s$, respectively. We start with the edge flip $(0, u_1) \to (x, y)$, and then the sequence
$$(0, u_2) \to (u_3, y) \to (v_1, v_2) \to (0, u_2),$$
where $(v_1, v_2) = (4, 6s+4)$ or $(6s-4, 6s-6)$ for odd or even $s$, respectively. After another flip $(0, u_3) \to (y, z)$, the three missing edges $(x, z)$, $(0, u_1)$, and $(0, u_3)$ can each be drawn with one crossing, following Figure \ref{fig-case1-mod}(b). The other vertices in the figure are $(w_1, w_2, w_3, w_4) = (1, 9s, 6, 6s+1)$ or $(1, 3s-1, 6s, 3s-2)$ for odd or even $s$, respectively. 
\end{proof}

\begin{figure}[tbp]
\centering
    \includegraphics[scale=0.9]{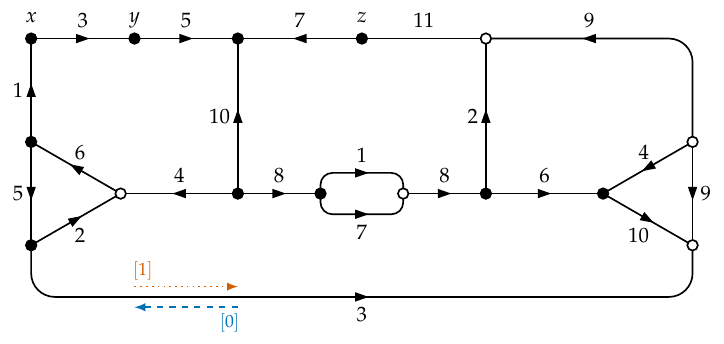}
\caption{Index 2 current graph for with current group $\mathbb{Z}_{22}$.}
\label{fig-case1-s2}
\end{figure}

\begin{figure}[tbp]
\centering
\includegraphics[width=\textwidth]{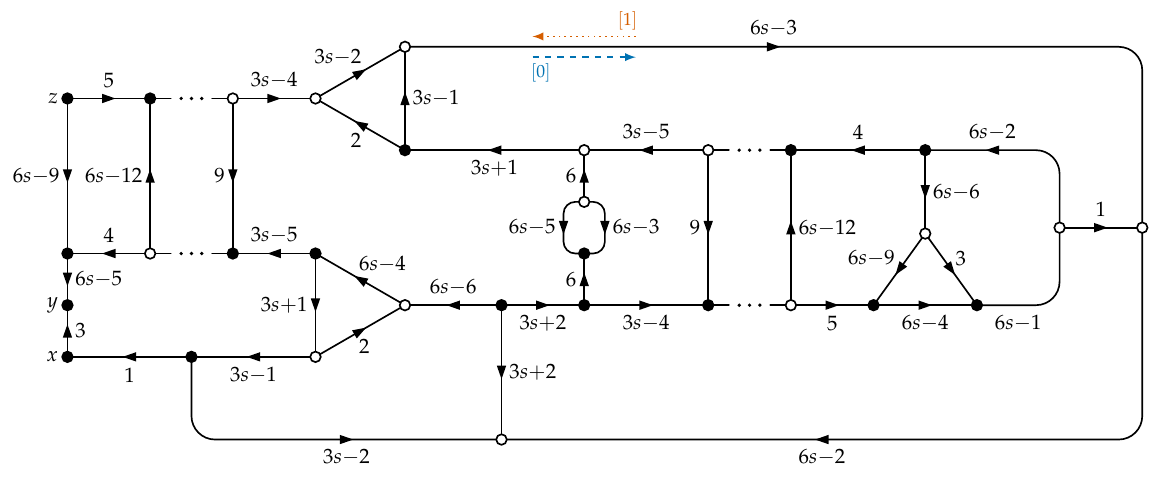}
\caption{Index 2 current graphs with current group $\mathbb{Z}_{12s-2}$, for odd $s \geq 3$.}
\label{fig-case1-general-odd}
\end{figure}

\begin{figure}[tbp]
\centering
\includegraphics[width=\textwidth]{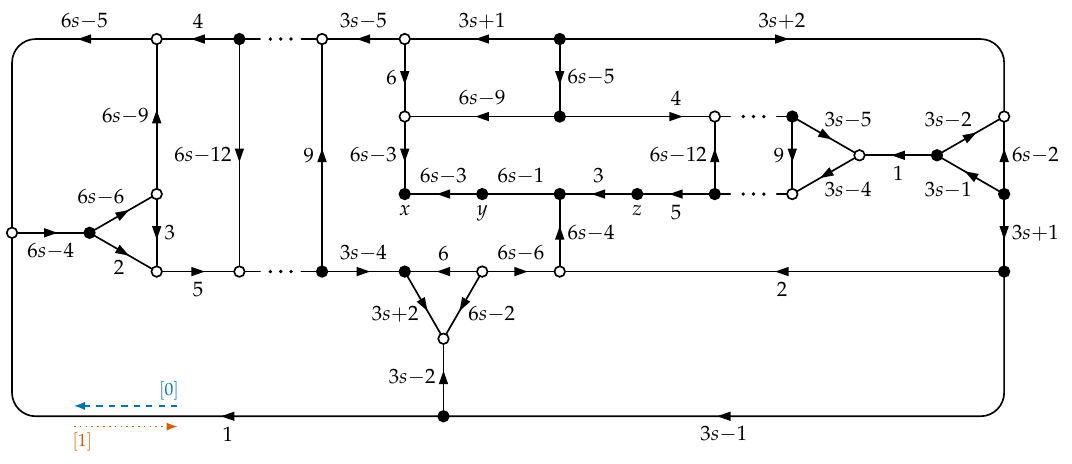}
\caption{Index 2 current graphs with current group $\mathbb{Z}_{12s-2}$, for even $s \geq 4$.}
\label{fig-case1-general-even}
\end{figure}

\begin{figure}[tbp]
\centering
    \begin{subfigure}[b]{0.99\textwidth}
        \centering
        \includegraphics[scale=0.9]{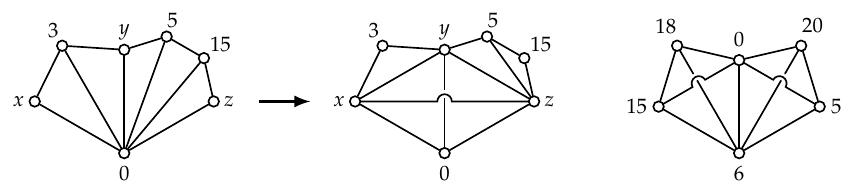}
        \caption{}
    \end{subfigure}
    \begin{subfigure}[b]{0.99\textwidth}
        \centering
        \includegraphics[scale=0.9]{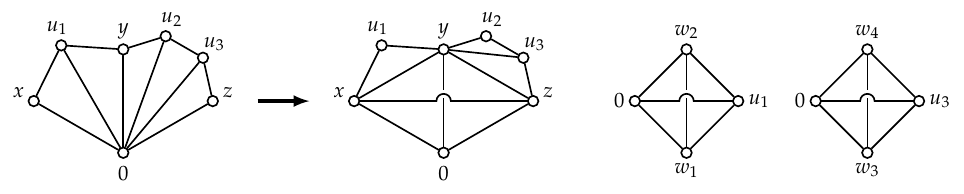}
        \caption{}
    \end{subfigure}
\caption{Edge flips and crossings for $K_{12s+1}$, for $s = 2$ (a) and $s \geq 3$ (b).}
\label{fig-case1-mod}
\end{figure}

The remaining case where Kainen drawings have three crossings is the dreaded $n \equiv 10 \Mod{12}$ case. Our \emph{ad hoc} approach to the $n = 22$ case uses a vortex that does not induce a Hamiltonian face:
\begin{enumerate}
\item[(V4)] In an index 3 current graph, labeled vertex of degree $3$, which is incident with each circuit, and whose excess has order 3 in $\mathbb{Z}_{18}$.
\end{enumerate}

To make use of higher index current graphs, our solution for the $n \equiv 10 \Mod{12}$ case starts with triangular embeddings of $K_{12s+10}-K_6$. Triangular embeddings of $K_n-K_6$ were first considered by Guy and Ringel \cite{GuyRingel}, who showed that if the vortices are positioned correctly, two handles can be used to obtain a triangular embedding of $K_n-K_{1,3}$. Our handle operations are slightly different, since we have the luxury of introducing crossings. 

\begin{lemma}
When $n = 12s+10$, $s \geq 0$, the surface crossing number of $K_n$ in the surface of genus $h(n)$ is 3.
\label{lem-case10}
\end{lemma}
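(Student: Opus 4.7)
My plan is to treat three sub-cases separately: the base case $n = 10$, the \emph{ad hoc} case $n = 22$, and a parametric family for $n = 12s+10$ with $s \geq 2$.

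For $n = 10$, I would use Gross's construction from Figure~\ref{fig-k10current}: the modification in Figure~\ref{fig-k10current}(b) replaces the edges $(0,2)$, $(2,6)$, $(6,0)$ of the derived triangular embedding of $K_{10} - K_3$ with the three missing edges among the vortex vertices $x$, $y$, $z$, and then reintroduces $(0,2)$, $(2,6)$, $(6,0)$ with one crossing each. This gives a Kainen drawing of $K_{10}$ in $S_3$ with exactly 3 crossings, matching Kainen's lower bound.

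For $n = 22$, I would construct an index 3 current graph with current group $\Z_{18}$ that uses a degree-3 vortex of type (V4). Because its excess has order 3 in $\Z_{18}$, this vortex generates a 3-sided face rather than a Hamiltonian face in the derived embedding, giving much tighter control over the adjacencies of the lettered vertices than a standard (V3) vortex would. Combining this with a short sequence of edge flips on the derived embedding should reposition the missing edges of $K_{22}$ into locations where each of the three of them can be drawn back with exactly one crossing.

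For $s \geq 2$, the strategy is to imitate Guy and Ringel's construction, which adds two handles to a triangular embedding of $K_n - K_6$ to obtain a triangular embedding of $K_n - K_{1,3}$. I would first exhibit a family of current graphs whose derived embeddings are triangular embeddings of $K_{12s+10} - K_6$ in the surface $S_{(12s+1)(s+1)}$; since $h(12s+10) = (12s+1)(s+1) + 2$, exactly two handles of slack are available between this surface and the target $S_{h(n)}$. Executing two handle operations in the spirit of Guy and Ringel, but with the extra flexibility afforded by allowing crossings, should produce a Kainen drawing of $K_n$ that realizes twelve of the $K_6$-edges by triangulation across the two handles and the remaining three by single crossings.

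The main obstacle will be engineering the family of current graphs in the last sub-case. As explained in Section~\ref{sec-general}, the straightforward index 1 approach for $K_{12s+10}$ is blocked because its current group $\Z_{12s+7}$ has prime order infinitely often, so lifting to higher index is impossible. Recasting the problem in terms of $K_n - K_6$ (six vortices to arrange) rather than $K_n - K_3$ (only three) provides the structural freedom to circumvent this arithmetic obstruction, but the six vortices must still be positioned so that the two handle operations are simultaneously compatible with the final requirement that each of the three residual missing edges lie across a face where it can be drawn with a single crossing; carefully verifying this last compatibility is the delicate step.
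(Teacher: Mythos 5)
Your three-way decomposition ($n=10$, an \emph{ad hoc} $n=22$, and a $K_n-K_6$ plus two-handles strategy for $s\geq 2$) is the same as the paper's, your treatment of $n=10$ is exactly the paper's, and your genus bookkeeping for $s \geq 2$ (a triangular embedding of $K_{12s+10}-K_6$ lives in $S_{(12s+1)(s+1)}$, leaving exactly two handles of slack below $S_{h(n)}$) is correct and is precisely the slack the paper exploits.

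There is, however, a genuine error in your $n=22$ subcase. A (V4) vortex --- degree $3$, incident with all three circuits of an index $3$ current graph over $\Z_{18}$, excess of order $3$ --- does \emph{not} generate a $3$-sided face. Each face it generates closes only after the excess has been accumulated $\mathrm{ord}(\epsilon)=3$ times, so it has length $3\cdot 3 = 9$, and since the $18$ corners must be covered, the vortex produces \emph{two} $9$-sided faces. This is the entire difficulty of the case: subdividing those faces yields two lettered vertices $c_0$ and $c_1$, adjacent respectively to only the even and only the odd numbered vertices, so the derived embedding is not a triangular embedding of $K_{22}$ minus a small graph with three conveniently placed missing edges. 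The paper must spend a handle near vertex $0$, perform edge flips to recover the edge $(1,5)$ destroyed by that operation, and then add and \emph{contract} an edge between $c_0$ and $c_1$ inside the resulting large face to fuse them into a single vertex $c$ of full degree, before the remaining missing edges can be placed. Your proposed ``short sequence of edge flips'' cannot substitute for this: no amount of flipping creates a single vertex adjacent to all of $0,\dotsc,17$ out of two vertices that each see only half of them, and the available handle (the derived surface sits one below $S_{h(22)}$) must actually be used. Separately, for $s\geq 2$ your outline is the right one, but the content of the lemma in that range is the explicit family of index $2$ current graphs for $K_{12s+10}-K_6$ together with a verified placement of the six vortices in the rotation of a single vertex making the two handle operations and the three residual crossings simultaneously realizable; you have honestly flagged this as the delicate step, but as written nothing is established there.
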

\begin{proof}
The smallest case where $s = 0$ was previously discussed in Section \ref{sec-general}. For $s = 1$, we require an unusual construction using the current graph in Figure \ref{fig-case10s1}. In the derived embedding, the vortex labeled $c$ induces two 9-sided faces, which we label $c_0$ and $c_1$, incident with exactly the even and odd vertices, respectively. After adding a handle near vertex 0, as seen in Figure \ref{fig-k22-handle}(a), we flip the edges 
$$(2, b) \to (0, 13) \to (1, 5)$$
to restore the deleted edge $(1, 5)$. Inside of the large face shown in Figure \ref{fig-k22-handle}(b), an edge between vertices $c_0$ and $c_1$ can be added and contracted, resulting in a new vertex $c$ adjacent to all of the numbered vertices. At this point, all remaining missing edges can be added inside of the large face or as crossings elsewhere in the embedding. 

\begin{figure}[tbp]
\centering
\includegraphics[scale=0.9]{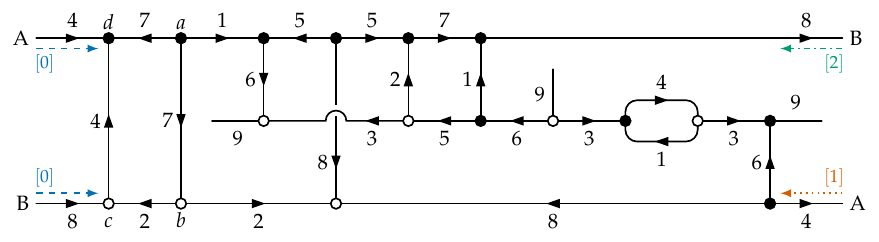}
\caption{Index 3 current graph with group $\mathbb{Z}_{18}$.}
\label{fig-case10s1}
\end{figure}

\begin{figure}[tbp]
\centering
\begin{subfigure}[b]{0.99\textwidth}
    \centering
    \includegraphics[scale=0.9]{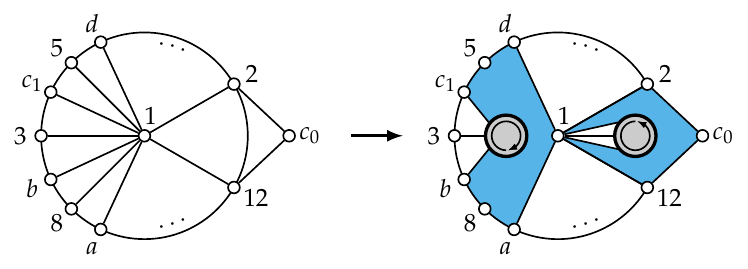}
    \caption{}
\end{subfigure}
\begin{subfigure}[b]{0.99\textwidth}
    \centering
    \includegraphics[scale=0.9]{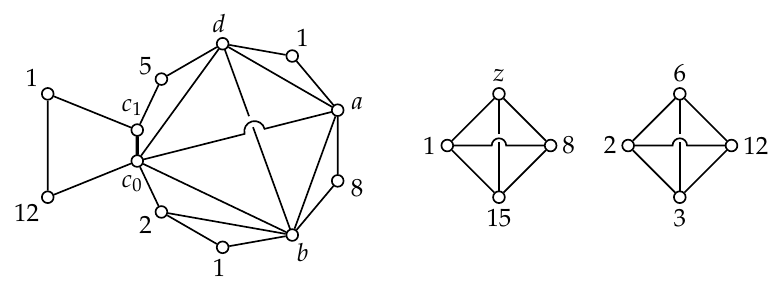}
    \caption{}
\end{subfigure}
\caption{A handle for $K_{22}$.}
\label{fig-k22-handle}
\end{figure}

For $s \geq 2$, the rotation at vertex 0 is of the form
$$\begin{array}{rlllllllllllllllllllllllllllll}
0. & \dots & a & v_1 & b & \dots & c & v_2 & d & \dots & e & v_3 & f & \dots
\end{array}$$
where $(v_1,v_2,v_3)$ is $(6s-3, 6s+7, 1)$ for $s$ even and $(12s-1, 3s+4, 6s+3)$ for $s$ odd. The modification in Figure \ref{fig-case10handle}(a) to the embedding near vertex $0$ deletes the edges $(0, v_1)$, $(0, v_2)$, and $(0, v_3)$, and then merges the resulting three quadrilaterals. When $s$ is even, we apply the flips
$$(v_3, a) \to (6s+2, 6s+8) \to (0, v_2)$$
and 
$$(v_3, d) \to (6s-2, 6s) \to (0, v_1).$$
When $s$ is odd, we apply the flips
$$(v_3, a) \to (6s+2, 6s+8) \to (0, v_1)$$
and
$$(v_3, d) \to (3s-1, 3s+1) \to (0, v_2).$$
In the merged face, we add the edges in Figure \ref{fig-case10handle}(b). By connecting the two shaded faces with a triangle, the resulting handle in Figure \ref{fig-case10handle}(c) can be used to add all nine missing edges between the lettered vertices using three crossings. 

\begin{figure}[tbp]
\centering
\includegraphics[scale=0.9]{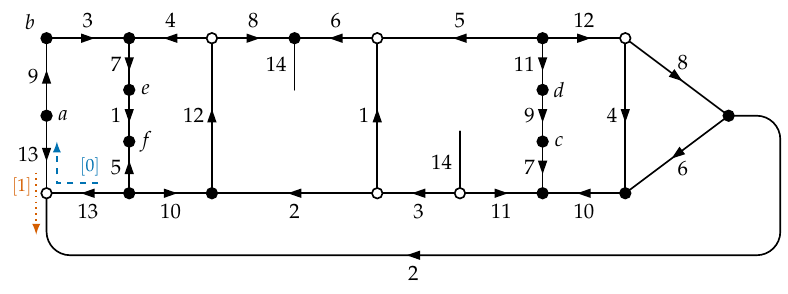}
\caption{Index 2 current graph with group $\mathbb{Z}_{28}$.}
\label{fig-case10s2}
\end{figure}

\begin{figure}[tbp]
\centering
\includegraphics[width=\textwidth]{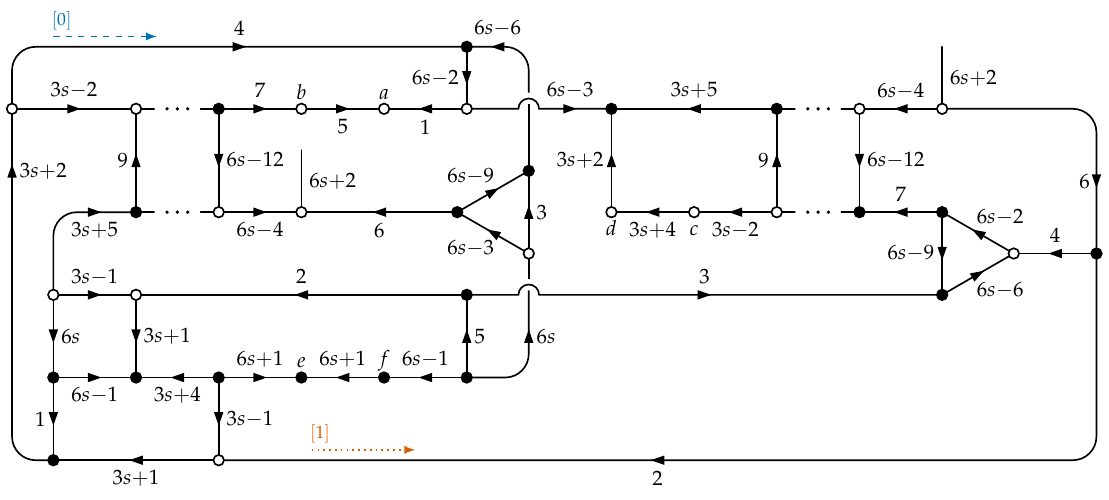}
\caption{Index 2 current graphs with current group $\mathbb{Z}_{12s+4}$, for odd $s \geq 3$.}
\label{fig-current-c10-odd}
\end{figure}

\begin{figure}[tbp]
\centering
\includegraphics[width=\textwidth]{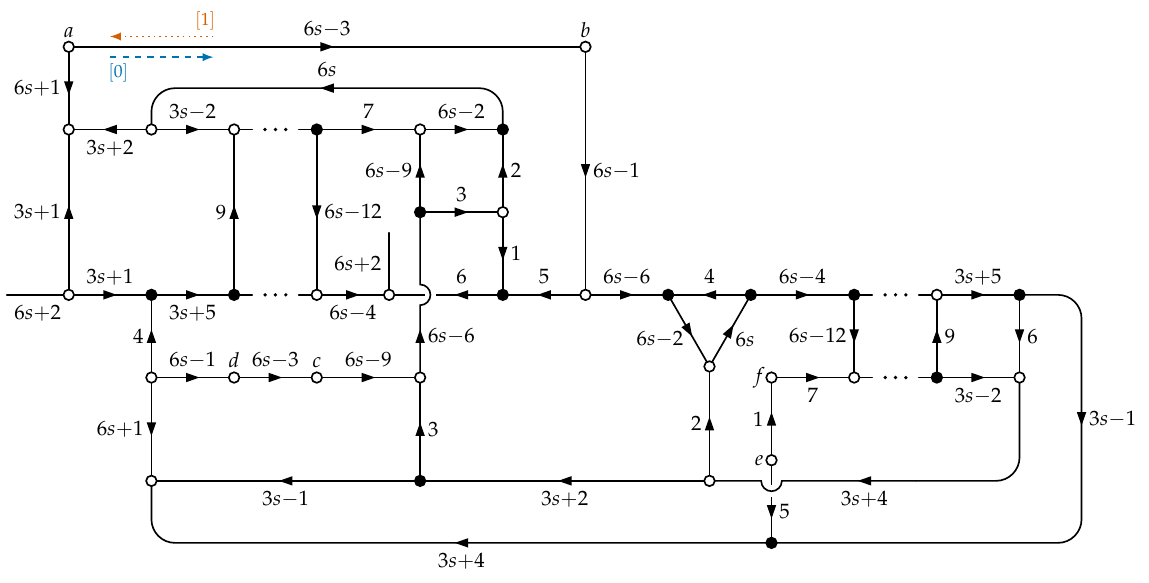}
\caption{Index 2 current graphs with current group $\mathbb{Z}_{12s+4}$, for even $s \geq 4$}
\label{fig-current-c10-even}
\end{figure}

\begin{figure}[tbp]
\centering
    \begin{subfigure}[b]{0.99\textwidth}
        \centering
        \includegraphics[scale=0.9]{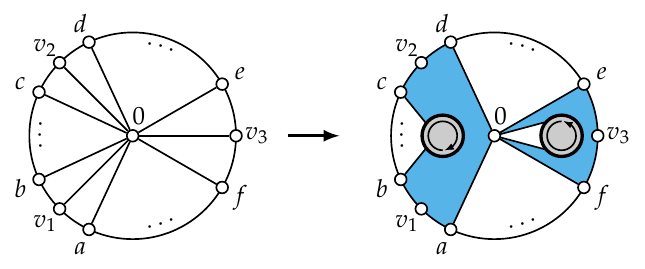}
        \caption{}
    \end{subfigure}
    \begin{subfigure}[b]{0.33\textwidth}
        \centering
        \includegraphics[scale=0.9]{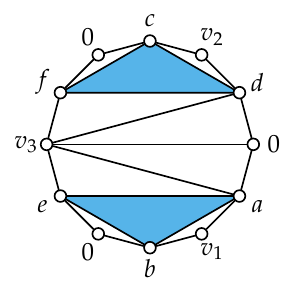}
        \caption{}
    \end{subfigure}
    \begin{subfigure}[b]{0.33\textwidth}
        \centering
        \includegraphics[scale=0.9]{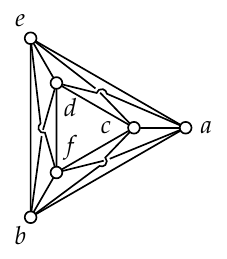}
        \caption{}
    \end{subfigure}
\caption{Adding all the edges between vortices using two handles.}
\label{fig-case10handle}
\end{figure}
\end{proof}

\begin{lemma}
When $n = 12s+8$, $s \geq 0$, the surface crossing number of $K_n$ in the surface of genus $h(n)$ is 4.
\label{lem-case8}
\end{lemma}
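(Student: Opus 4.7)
The lower bound $\CR_{h(n)}(K_n) \geq 4$ for $n \equiv 8 \Mod{12}$ is immediate from the computation of $t(n)$ earlier in this section, so the work lies entirely in constructing a Kainen drawing with exactly four crossings. Equivalently, we must exhibit a Kainen subembedding: a triangular embedding of $K_n$ minus some $4$-edge subgraph $H$ in which each of the four missing edges can be inserted through a quadrilateral face using exactly one crossing, with no edge crossed twice.

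For the base case $n = 8$, I would invoke the well-known torus crossing number $\CR_1(K_8) = 4$ from the small-case crossing number literature. Alternatively, it can be verified directly by exhibiting a triangular embedding of $K_8 - H$ on the torus for an appropriate $4$-edge graph $H$ (for example $H = K_{1,4}$) and checking the single-crossing property by inspection.

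For the infinite family $n = 12s + 8$ with $s \geq 1$, the plan follows the template of Lemmas \ref{lem-case69}, \ref{lem-case1}, and \ref{lem-case10}. First, design families of current graphs---most likely index $2$ or index $3$ ladders, split by the parity of $s$---whose derived embeddings triangulate $K_n - H$ for a carefully chosen $H$ on four edges; natural candidates are $K_{1,4}$, $C_4$, or a pair of short disjoint paths. Next, identify a numbered vertex whose rotation contains the lettered endpoints of $H$ in configurations amenable to local modification, and apply a sequence of edge flips that brings the missing edges adjacent to quadrilateral faces. Finally, draw each missing edge across a single edge of the embedding, checking that the four single-crossing routes use pairwise distinct edges.

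The principal obstacle is this last step: unlike the one- and three-crossing cases, four independent single-crossing routes generally cannot all be squeezed into the neighborhood of a single vertex without reusing an edge, since every edge in a Kainen drawing has responsibility at most $1$. The construction must therefore partition the four missing edges into two (or more) blocks, each handled inside a topologically disjoint region---either two separated pairs of flips, or, by analogy with Lemma \ref{lem-case10}, a handle operation that absorbs several missing edges into a common modified region. Designing current graphs whose vortex placements induce such a clean decomposition of the rotations, and verifying pairwise independence of the crossings uniformly in $s$ for both parities, is where I expect the real technical difficulty to lie. As in the earlier lemmas, I anticipate that one or two of the smallest cases (likely $n = 20$, and perhaps $n = 32$) will resist the general ladder construction and require ad hoc current graphs.
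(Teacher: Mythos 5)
Your outline gets the framing right---the lower bound is immediate from $t(n)$, the base case $K_8$ in the torus is settled by citation to Guy, Jenkyns, and Schaer, and you correctly anticipate that $n=20$ resists the general construction---but it stops exactly where the proof has to begin. No current graphs are specified, no flip sequences are given, and the independence of the four crossing sites is never verified; for a constructive existence statement the construction is the entire content, so what you have is a research plan rather than a proof. There is also a substantive misdirection in your primary plan: you propose families of index 2 or 3 ladders whose derived embeddings are triangular embeddings of $K_n - H$ for a four-edge graph $H$ (such as $K_{1,4}$ or $C_4$) sitting directly in $S_{h(n)}$. Vortices do not naturally produce such missing subgraphs---the deficiency of a derived embedding is governed by the vortex types and comes out as a disjoint union of complete graphs on the lettered vertices---so there is no known current-graph family whose derived embedding is $K_{12s+8}$ minus four edges in genus $h(n)$, and the paper does not attempt to build one.

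What the paper actually does is the route you relegate to a fallback. For $s \geq 2$ it uses index 3 current graphs over $\mathbb{Z}_{12s+3}$ with five vortices, giving triangular embeddings of $K_{12s+8}-K_5$: ten missing edges, in genus $h(n)-1$. All five lettered vertices appear in the rotation at vertex $1$, and after two explicit edge-flip sequences a single added handle joining two designated faces absorbs all of the missing edges except $(d,e)$; that last edge and the edges displaced by the modifications supply the four crossings, whose locations are read off from the explicit rotation (the vertices $d$ and $e$ are separated by one vertex in the rotation of every numbered vertex not divisible by $3$, which is what guarantees a free crossing site). For $s=1$ the paper uses not an ad hoc low-index current graph but an index 6 lift of a Jungerman--Ringel current graph, producing a triangular embedding with vortices $w_0$, $w_1$, $x$; deleting a three-edge path merges $w_0$ and $w_1$ into one vertex $w$, and the edge $(w,x)$ together with the three deleted path edges accounts for the four crossings. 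To turn your proposal into a proof you would need to commit to this $K_5$-plus-handle architecture and then actually exhibit the current graphs for both parities of $s$ and check the crossing locations.
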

\begin{proof}
Guy \emph{et al.} \cite{Guy-ToroidalComplete} found a drawing of $K_8$ in the torus with four crossings. For $s = 1$, starting with the rotation system in Table \ref{tab-k20} in Appendix \ref{app-table}, the modification described in Figure \ref{fig-k20add} deletes a path on 3 edges to merge $w_0$ and $w_1$ into a single vertex $w$. The edge $(w,x)$ and the edges of the deleted path can each be drawn in using one additional crossing. In particular, $(0,1)$ crosses $(2, 6)$, $(0, 16)$ crosses $(4, 5)$, and $(1, 17)$ crosses $(4, 13)$.  

\begin{figure}[tbp]
\centering
\includegraphics[scale=0.9]{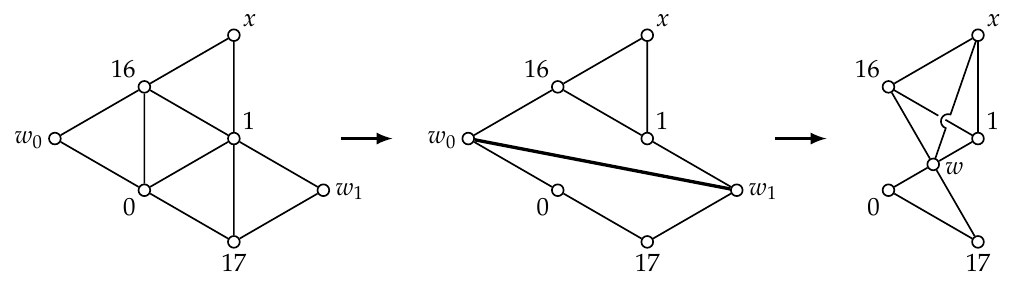}
\caption{Constructing a Kainen drawing of $K_{20}$.}
\label{fig-k20add}
\end{figure}

Consider the family of current graphs in Figure \ref{fig-case8gen} defined for all $s \geq 2$. The derived embeddings are triangular embeddings of $K_{12s+8}-K_5$, and the rotation at vertex $1$ is of the form
$$\begin{array}{rlllllllllllllllllllllllllllll}
1. & \dots & a & 0 & b & 6s-1 & 6 & c & 12s & d & 6s+5 & e & \dots
\end{array}$$
The goal of the initial modification in Figure \ref{fig-case8flips} is to delete enough edges so that the edges $(a, b)$ and $(c, e)$ can be added. After applying two sequences of edge flips 
$$(6s+5, b) \to (3, 7) \to (c, d)$$ 
and 
$$(12s, a) \to (12s-1, 12s+1) \to (0,1),$$ 
all remaining edges, except $(d, e)$, can be added inside of a handle connecting the two shaded faces in Figure \ref{fig-case8flips}(a), as seen in Figure \ref{fig-case8flips}(b). There are several locations to incorporate the last edge $(d,e)$ using one crossing, since in the original derived embedding, $d$ and $e$ are separated by one vertex in the rotation of every numbered vertex that is not a multiple of 3. 
\end{proof}

\begin{figure}[tbp]
\centering
\includegraphics[scale=0.9]{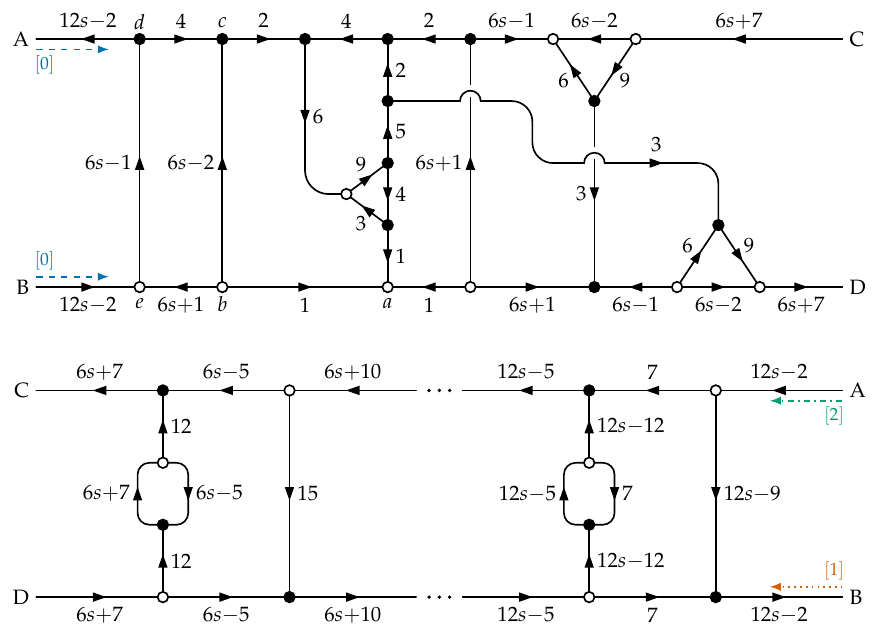}
\caption{A family of index 3 current graphs with current group $\mathbb{Z}_{12s+3}$.}
\label{fig-case8gen}
\end{figure}

\begin{figure}[tbp]
\centering
\begin{subfigure}[b]{0.99\textwidth}
    \centering
    \includegraphics[scale=0.9]{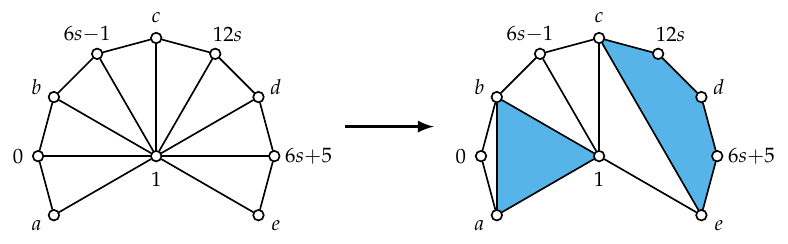}
    \caption{}
\end{subfigure}
\begin{subfigure}[b]{0.99\textwidth}
    \centering
    \includegraphics[scale=0.9]{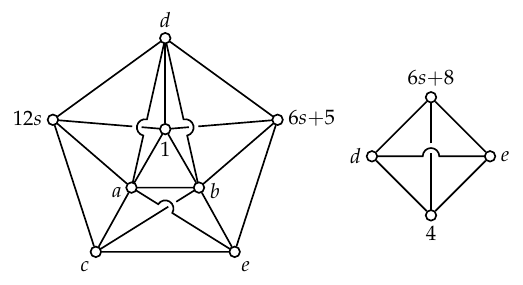}
    \caption{}
\end{subfigure}
\caption{A handle for $n \equiv 8 \Mod{12}$.}
\label{fig-case8flips}
\end{figure}

Surprisingly, the general solution for $n \equiv 11 \pmod{12}$ is tackled using index 1 current graphs, as was done in the original Map Color Theorem by Ringel and Youngs \cite{RingelYoungs-Case11}. They introduced the final type of vortex we need:

\begin{enumerate}
\item[(V5)] In an index 1 current, a labeled vertex of degree $3$, where the excess of the vertex generates the subgroup consisting of multiples of 3, and the incident currents are congruent modulo 3.
\end{enumerate}

Such a vortex generates three Hamiltonian faces, so we label each face corner of the vortex with a different letter. 

\begin{lemma}
When $n = 12s+11$, $s \geq 0$, the surface crossing number of $K_n$ in the surface of genus $h(n)$ is 4.
\label{lem-case11}
\end{lemma}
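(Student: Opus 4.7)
The plan is to follow the Ringel--Youngs approach from Case 11 of the Map Color Theorem, using an index 1 current graph with a (V5) vortex, supplemented with a short sequence of edge flips to convert a near-triangular embedding into a Kainen drawing. Kainen's lower bound already gives $\CR_{h(n)}(K_n) \geq 4$, so it suffices to exhibit a drawing with exactly four crossings.

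For the base case $s = 0$, i.e., $K_{11}$, I would either cite a known construction or present a small ad hoc current graph together with an explicit rotation system, analogously to the small-$s$ cases of Lemmas~\ref{lem-case2} and~\ref{lem-case10}. For $s \geq s_0$, I would exhibit an infinite family of index 1 current graphs in ladder form with current group $\mathbb{Z}_{12s+8}$, each containing a single (V5) vortex whose three face corners are labeled $x$, $y$, $z$. The derived embedding is a triangular embedding of $K_n - H$ in $S_{h(n)}$ with exactly $|E(H)| = 4$. This count is forced by a short parity argument: in any triangular embedding of a spanning subgraph of $K_n$, the edge count must be divisible by $3$ and (since $V = n$ is odd here) congruent to $3 \Mod{6}$, so that the Euler-derived genus is a nonnegative integer; and for $n = 12s+11$ one has $\binom{n}{2} \equiv 1 \Mod{6}$, forcing the number of missing edges to be $\equiv 4 \Mod{6}$, with the smallest admissible value being exactly $4$ at genus $h(n)$. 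Three of the edges of $H$ are the letter-letter edges $xy$, $yz$, $xz$ among the subdivision vertices of the vortex, while the fourth is an additional edge between numbered vertices, built into the current graph itself.

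After presenting the current graph family, I would identify a numbered vertex $v$ whose rotation in the derived embedding contains $x$, $y$, $z$ separated by only a few other entries, and then apply a short sequence of edge flips (in the spirit of Lemmas~\ref{lem-case69} and~\ref{lem-case1}) to rearrange the local structure around $v$ so that each of the three letter-letter edges can be routed across a single edge of the embedding. The fourth missing edge between numbered vertices is then drawn with one final crossing, yielding a Kainen drawing with exactly four crossings in $S_{h(n)}$.

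The main obstacle is the one foreshadowed in Section~\ref{sec-general}: in an index 1 current graph, the three subdivision vertices of a (V5) vortex naturally sit far apart in the derived rotation system, which is precisely the difficulty that forced a completely different strategy in Case 10. Unlike Case 10, however, the constraint at a (V5) vortex that its three incident currents be congruent modulo $3$ gives enough local control over the derived rotation system for a short flip sequence to force $x$, $y$, $z$ into a common rotation. Verifying that a single uniform flip sequence works for all $s \geq s_0$, and that the fourth missing edge can independently be added with exactly one crossing without interfering with the other three, will be the key technical step.
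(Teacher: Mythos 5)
Your parity computation is correct, and it is precisely the point at which your own construction breaks. For $n=12s+11$, a triangular embedding of a spanning subgraph of $K_n$ in an orientable surface must be missing a number of edges congruent to $4 \Mod{6}$. But an index 1 current graph over $\Z_{12s+8}$ with a single (V5) vortex and all other vertices of types (V1)/(V2) derives a triangular embedding in which every numbered vertex is adjacent to every other numbered vertex and to all three lettered vertices, so exactly the three letter--letter edges are missing --- which your own argument shows is impossible. The ``fourth edge built into the current graph'' has no mechanism behind it: a missing numbered--numbered edge in a $\Z_m$-symmetric derived embedding lies in an orbit of size $m$ or $m/2=6s+4$, never size $1$, and the edge flips you propose to apply afterwards preserve the number of missing edges. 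There is also a smaller incompatibility: a (V5) vortex requires its excess to generate the proper subgroup of multiples of $3$, which forces $3\mid m$, and $3\nmid 12s+8$.

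The paper escapes this by not aiming for a triangular embedding at genus $h(n)$ at all. It uses index 1 current graphs over $\Z_{12s+6}$ (so the (V5) vortex is legal) together with two additional vortices, giving \emph{five} lettered vertices and a triangular embedding of $K_n-K_5$; here $10\equiv 4 \Mod{6}$, so the parity is consistent, but the embedding sits one genus lower, in $S_{h(n)-1}$. The last unit of genus is then spent on a handle, inside which all ten edges among the lettered vertices are inserted using a single crossing, and a short flip sequence near vertex $0$ displaces three further edges that are each reinstated with one crossing, for a total of four. Your intended target --- a Kainen subembedding of $K_{12s+11}$ that is triangular and missing exactly four edges --- does exist in small cases (the $K_{23}-K_{1,4}$ embedding used for $s=1$ is one), but an infinite family of them cannot come directly from an index 1 current graph with one (V5) vortex, for the symmetry reasons above. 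To salvage your outline you would need either more vortices plus a handle, as in the paper, or an essentially asymmetric construction.
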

\begin{proof}
In an unpublished work, Lee \cite{Lee-GenusCrossing} found a Kainen drawing of $K_{11}$, which is rendered as a rotation system of a Kainen subembedding in Appendix \ref{app-table}. The triangular embedding of $K_{23}-K_{1,4}$ constructed in Sun \cite{Sun-Minimum} is a Kainen subembedding of $K_{23}$. 

Consider the current graphs in Figure \ref{fig-case11s2}, \ref{fig-case11s3}, and \ref{fig-case11sgen}. In each derived embedding, which is a triangular embedding of $K_n-K_5$, the rotation at vertex $0$ is of the form
$$\begin{array}{rlllllllllllllllllllllllllllll}
0. & \dots & a & 6s+1 & x & 6s+5 & b & \dots & y & v & 9s+8 & c & \dots,
\end{array}$$
where $v$ is $19$, $25$, and $1$ for $s = 2$, $s = 3$, and $s \geq 4$, respectively. In all cases, there is a sequence of edge flips that starts with $(v, c)$ and ends with $(0, 6s+5)$:
\begin{itemize}
\item for $s = 2$, $(19, c) \to (2, 27) \to (0, 17)$,
\item for $s = 3$, $(25, c) \to (2, 36) \to (0, 23)$, and
\item for $s \geq 4$: $(1, c) \to (6s+2, 9s+3) \to (5, 6s+7) \to (0, 6s+5)$.
\end{itemize}
The same part of the current graph that enables the last edge flip in each of these sequences also allows the edge $(6s+1, 0)$ to be drawn into the embedding with one crossing. 

\begin{figure}[tbp]
\centering
\includegraphics[scale=0.9]{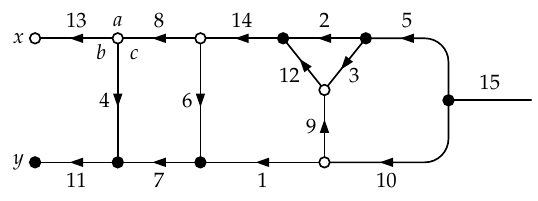}
\caption{Index 1 current graph with group $\mathbb{Z}_{30}$.}
\label{fig-case11s2}
\end{figure}

\begin{figure}[tbp]
\centering
\includegraphics[scale=0.9]{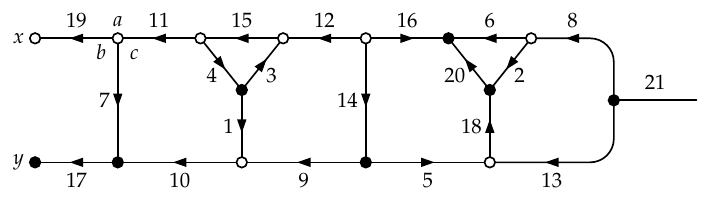}
\caption{Index 1 current graph with group $\mathbb{Z}_{42}$.}
\label{fig-case11s3}
\end{figure}

\begin{figure}[tbp]
\centering
\includegraphics[scale=0.9]{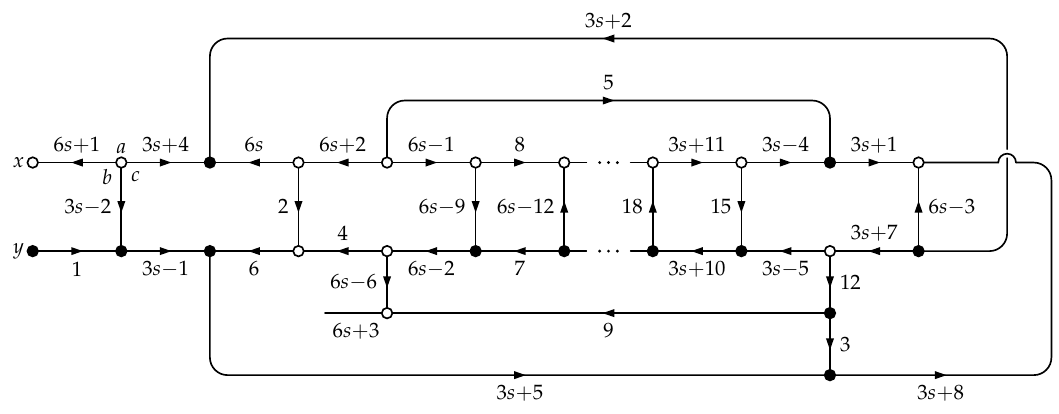}
\caption{A family of index 1 current graphs with group $\mathbb{Z}_{12s+6}$, $s \geq 4$.}
\label{fig-case11sgen}
\end{figure}

After connecting the shaded faces in Figure \ref{fig-case11-handle}(a) with a handle, Figure \ref{fig-case11-handle}(b) shows how to connect all the lettered vertices using only one crossing. The remaining two edges $(0, v)$ and $(0, 9s+8)$ can both be drawn into the embedding using one crossing, as seen in Figure \ref{fig-case11-flips}. 
\end{proof}

\begin{figure}[tbp]
\centering
\begin{subfigure}[b]{0.65\textwidth}
    \centering
    \includegraphics[scale=0.9]{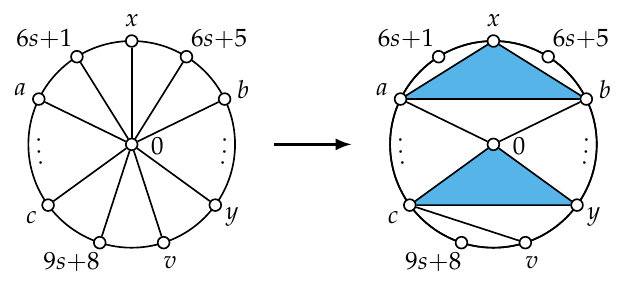}
    \caption{}
\end{subfigure}
\begin{subfigure}[b]{0.34\textwidth}
    \centering
    \includegraphics[scale=0.9]{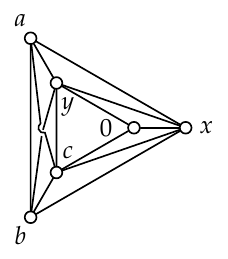}
    \caption{}
\end{subfigure}
\caption{A handle for $n \equiv 11 \Mod{12}$.}
\label{fig-case11-handle}
\end{figure}

\begin{figure}[tbp]
\centering
    \begin{subfigure}[b]{0.32\textwidth}
        \centering
        \includegraphics[scale=0.9]{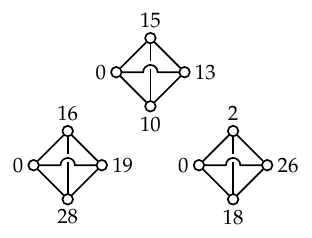}
        \caption{}
    \end{subfigure}
    \begin{subfigure}[b]{0.32\textwidth}
        \centering
        \includegraphics[scale=0.9]{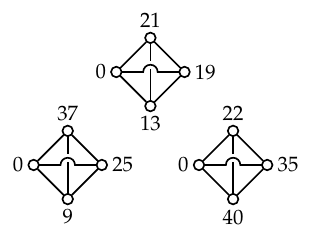}
        \caption{}
    \end{subfigure}
    \begin{subfigure}[b]{0.32\textwidth}
        \centering
        \includegraphics[scale=0.9]{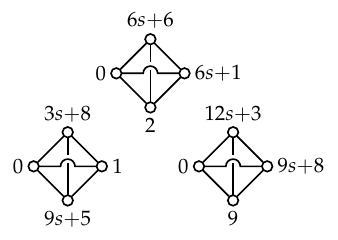}
        \caption{}
    \end{subfigure}
\caption{Crossings for $K_{12s+11}$, $s = 2$ (a), $s = 3$ (b), and $s \geq 4$ (c).}
\label{fig-case11-flips}
\end{figure}

\section{Kainen drawings of $K_{m,n}$}\label{sec-kmn}

\subsection{$K_{5,5}$ needs two crossings}\label{sec-k55}

As mentioned earlier, the new bipartite counterexample to Kainen's conjecture is $K_{5,5}$. Kainen's lower bound predicts that in the surface $S_2$, $K_{5,5}$ can be drawn with one crossing. The proof is a matter of case-checking, relying on immediate properties of quadrilateral embeddings in orientable surfaces. In this section, rotations and face boundary walks are given clockwise and counterclockwise orientations, respectively.

\begin{property}[orientability]
For every edge $(u, v)$, there is exactly one quadrilateral of the form $[\dotsc, u, v, \dotsc]$ and one of the form $[\dotsc, v, u, \dotsc].$ 
\label{prop-orient}
\end{property}

\begin{property}[manifold]
If the rotation at vertex $u$ is of the form 
$$\begin{array}{rrrrrrrrrrrr}
u. & v_1 & v_2 & v_3 & \dotsc & v_j, \\ 
\end{array}$$
then there are quadrilateral faces of the form $$[v_1, u, v_2, \dotsc], [v_2, u, v_3, \dotsc], \dotsc, [v_j, u, v_1, \dotsc].$$
\label{prop-manifold}
\end{property}

\begin{theorem}
The crossing number of $K_{5,5}$ in $S_2$ is $2$.
\end{theorem}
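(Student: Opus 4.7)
The upper bound $\CR_2(K_{5,5}) \le 2$ follows by augmenting the quadrilateral embedding of $K_{5,5} - K_2$ in $S_2$ due to Mohar \emph{et al.}~\cite{MPP-NearlyComplete}: one checks by direct inspection of their rotation system that each of the two missing matching edges can be drawn in using a single crossing.

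For the lower bound, I plan to argue by contradiction. Suppose $K_{5,5}$ admits a drawing in $S_2$ with exactly one crossing, and let $e=(u,v)$ be one of the two edges at the crossing. Deleting $e$ yields an embedding $\phi$ of $K_{5,5}-e$ in $S_2$, and Euler's formula gives $12$ faces. Since $K_{5,5}$ has girth $4$, the girth inequality $2|E|\ge 4|F|$ becomes the equality $48 = 48$, forcing every face of $\phi$ to be a quadrilateral. For $e$ to be reinsertable using a single crossing, $\phi$ must contain a face $f_u$ incident to $u$ and a face $f_v$ incident to $v$ that share a common edge whose endpoints are neither $u$ nor $v$. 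Since $v$ is not a neighbor of $u$ in $K_{5,5}-e$, Property~\ref{prop-manifold} implies that no face at $u$ contains $v$, so in particular $f_u\ne f_v$; the goal is to derive a contradiction by showing no such pair $(f_u,f_v)$ exists.

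Fixing the rotation at $u$ to be $u\colon v_2,v_3,v_4,v_5$ after relabeling, Property~\ref{prop-manifold} forces the four faces at $u$ to have the form $[v_i,u,v_j,a]$ with $a \in \{u_2,u_3,u_4,u_5\}$ playing the role of an ``opposite'' left vertex; symmetrically, the four faces at $v$ have the form $[u_i,v,u_j,b]$ with $b \in \{v_2,v_3,v_4,v_5\}$. Any edge that could carry the single crossing therefore has the form $(a,b)$, and by Property~\ref{prop-orient} it is shared between the unique face of the form $[\ldots,a,b,\ldots]$ and the unique face of the form $[\ldots,b,a,\ldots]$, pinning down the two incident quadrilaterals and propagating further information to the rotations of at least two other vertices. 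The proof then concludes by enumerating, modulo the cyclic relabeling at $u$ and the left/right swap, the possible rotations at $v$ together with the opposite-vertex assignments $\{a_k\}$ and $\{b_\ell\}$, and showing in each case that either the resulting partial rotation system violates Property~\ref{prop-manifold} at some third vertex, or the face count is inconsistent with a quadrangulation of $S_2$. The main obstacle is organizing this case analysis so that the symmetries keep the list manageable; each individual case is then dispatched by a short combinatorial check.
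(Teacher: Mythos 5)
Your upper bound argument does not work as stated. The graph $K_{5,5}-K_2$ is $K_{5,5}$ minus a \emph{single} edge (and Euler's formula forces this: a quadrangulation of $S_2$ on $10$ vertices has exactly $24$ edges), so there are not ``two missing matching edges'' in the embedding of Mohar \emph{et al.}~\cite{MPP-NearlyComplete}. Worse, the claim that the missing edge can be drawn in with a single crossing cannot be correct: it would give $\CR_2(K_{5,5})\le 1$, contradicting the very lower bound you set out to prove (and indeed your own lower-bound argument, if completed, shows that \emph{no} quadrangular embedding of $K_{5,5}$ minus an edge admits a one-crossing insertion of that edge). The correct route to the upper bound is to insert the one missing edge $(0,a)$ with \emph{two} crossings: pick a right vertex $m$ sharing a face with $a$; the two faces at $m$ adjacent along appropriate edges let $(0,a)$ be routed across at most two edges incident with $m$. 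You need to supply an argument of this kind; ``direct inspection'' of a one-crossing insertion will fail.

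For the lower bound, your setup is sound and matches the paper's strategy: deleting one edge of the putative single crossing yields a quadrangular embedding of $K_{5,5}-e$ in $S_2$ with $12$ faces, the crossed edge must lie on the common boundary of a face at $u$ and a face at $v$, and the contradiction is to be extracted from the orientability and manifold properties of quadrangulations. (Two small points to tighten: you should note that the girth bound forces the embedding to be cellular before invoking Euler's formula, and you should justify that the crossed edge may be assumed not incident to $u$ or $v$, e.g., because adjacent edges need never cross in a crossing-minimal drawing.) However, everything after the setup is a promissory note: ``enumerating \ldots the possible rotations at $v$ together with the opposite-vertex assignments \ldots and showing in each case'' \emph{is} the proof, and you have not carried it out. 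The paper does this by first normalizing to a specific pair of adjacent faces $[0,b,1,e]$ and $[a,1,b,4]$ sharing the crossed edge $(b,1)$, observing that exactly four faces remain undetermined, and then splitting into two cases according to whether certain corner labels coincide; each branch is killed by a short orientability or manifold violation. Until you exhibit a comparably explicit and exhaustive case analysis (and verify that your symmetry reductions are legitimate), the lower bound is not established.
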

\begin{proof}
Let the left vertices of $K_{5,5}$ be labeled $a$, $b$, $c$, $d$, and $e$, and let the right vertices be labeled $0$, $1$, $2$, $3$, and $4$. Suppose there exists a Kainen subembedding of $K_{5,5}$ that is missing the edge $(a,0)$. Without loss of generality, we may assume that the embedding near vertices $a$ and $0$ looks like Figure~\ref{fig-k55initial}, in which there are adjacent faces $\lbrack 0, b, 1, e\rbrack$ and $\lbrack a, 1, b, 4\rbrack$ and the missing edge can be drawn in by crossing over the edge $(b,1)$. By the Euler polyhedral equation, any quadrilateral embedding of $K_{5,5}-K_2$ must have 12 faces. Thus, there are four additional faces besides those incident with $a$ and $0$ missing from Figure~\ref{fig-k55initial}.

\begin{figure}[tbp]
\centering
\includegraphics[scale=0.9]{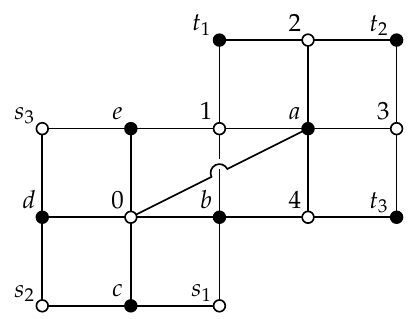}
\caption{A submap of a purported 1-crossing drawing of $K_{5,5}$.}
\label{fig-k55initial}
\end{figure}

Since both vertices $4$ and $b$ have degree $5$, the vertices labeled $s_1$ and $t_3$ cannot be $4$ or $b$, respectively, otherwise there would be a manifold violation. By orientability, there must be a face of the form $[4, b, \dotsc]$, which is one of the four missing faces. Of the unknown vertices, the only other vertex that can be $b$ is $t_2$. Similarly, there is another new face of the form $[e, 1, \dotsc]$ (since each face boundary alternates between left and right vertices, it cannot be the same as the previously discovered face), and the only other potential instance of vertex $1$ is $v_1$. We distinguish between two cases: either $b = t_2$ or $1 = s_2$, or neither is true.

In the former case, suppose without loss of generality that vertex $b = t_2$, as in Figure \ref{fig-k55case1}(a). There are now five quadrilaterals incident with $b$, allowing us to determine that $s_1 = 2$ and $s_0 = 3$ using the manifold property. We next determine the identities of vertices $t_0$ and $t_1$. $t_1$ cannot be $c$, because otherwise there would be two faces of the form $[c, 2, \dotsc]$, an orientability violation. Thus, $t_0 = c$, and $t_1 = d$. Since there must be quadrilaterals of the form $[c, 1, \dotsc]$ and $[d, 2, \dotsc]$, those must be the remaining two missing faces. By the manifold property with $u = d$, these faces are distinct. To satisfy the manifold property with $u = 2$, there must be a face of the form $[e, 2, c, \dotsc]$. However, no such face fits this description: only one face is incident with both $c$ and $e$, but it would have the wrong orientation.

\begin{figure}[tbp]
\centering
    \begin{subfigure}[b]{0.49\textwidth}
        \centering
        \includegraphics[scale=0.9]{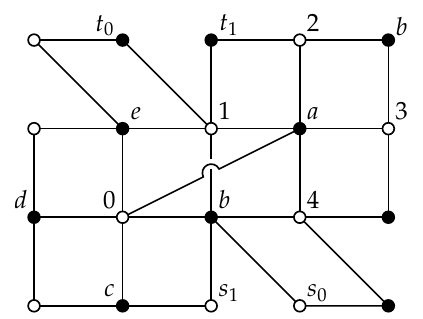}
        \caption{}
    \end{subfigure}
    \begin{subfigure}[b]{0.49\textwidth}
        \centering
        \includegraphics[scale=0.9]{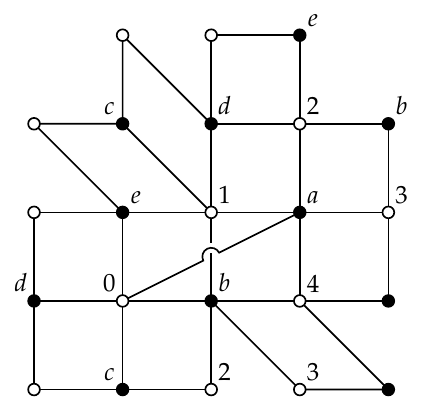}
        \caption{}
    \end{subfigure}
\caption{The repeated vertex case.}
\label{fig-k55case1}
\end{figure}

When $b \neq t_2$ and $1 \neq s_2$, then there are two new faces incident with each of vertices $1$ and $b$. These pairs of faces are disjoint because the edge $(1,b)$ is already incident with two faces. In Figure~\ref{fig-k55case2}, all twelve faces are shown. All vertices with label $s_i$ (respectively, $t_i$) must be one of the vertices $2, 3, 4$ (respectively, $c, d, e$). Every vertex except $a$ and $0$ has degree $5$, which allows us to infer the following constraints:

\begin{itemize}
\item $\{s_0, s_1\} = \{2, 3\}$
\item $\{t_0, t_1\} = \{c, d\}$. 
\item Exactly two of $s_2, \dotsc, s_6$ is vertex $4$.
\item Exactly two of $t_2, \dotsc, t_6$ is vertex $e$. 
\end{itemize}

We consider which combinations of vertices correspond to vertex $e$. They cannot be consecutive vertices $t_i, t_{i+1}$, otherwise there is a manifold violation. Vertices $t_2$ and $t_5$ cannot both be $e$, since the fact that $\{s_0, s_1\} = \{2, 3\}$ forces either a manifold or orientability violation, Thus, either $t_2 = t_4 = e$ or $t_3 = t_5 = e$, and similarly, $s_2 = s_4 = e$ or $s_3 = s_5 = 4$. 

If $t_2 = t_4 = e$ and $s_3 = s_5 = 4$, or $t_3 = t_5 = e$ and $s_2 = s_4 = 4$, then there is an orientability violation at the edge $(4, e)$. If $t_2 = t_4 = e$ and $s_2 = s_4 = 4$, then by Property \ref{prop-manifold} on $u = e$, $s_0 = 2$, forcing $s_1 = 3$. By symmetry, $t_0 = c$ and $t_1 = d$. Since the edge $(2,e)$ is already incident with two faces, $s_3 = 3$, forcing $s_5 = 2$. However, there are two faces of the form $[1, d, 2, \dotsc]$ and $[2, d, 1, \dotsc]$, which is a manifold violation at $u = d$. Finally, if both $t_3 = t_5 = e$ and $s_3 = s_5 = 4$, then by the manifold property applied to $u = e$, $s_0 = s_4 = 2$ and $s_1 = 3$. Since there are already five faces incident with vertex $2$, $s_2$ must be $3$, which would be a manifold violation at vertex $u = c$. 

\begin{figure}[tbp]
\centering
\includegraphics[scale=0.9]{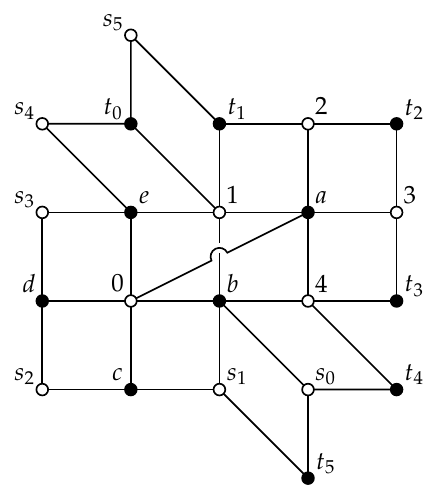}
\caption{The distinct faces case.}
\label{fig-k55case2}
\end{figure}

Any quadrilateral embedding of $K_{5,5}-e$, such as the one constructed by Mohar \emph{et al.} \cite{MPP-NearlyComplete}, can be used to give a drawing with at most two crossings. Let $m$ be any right vertex that shares a face with $a$. The edge $(0,a)$ can be added by crossing at most two edges incident with $m$, as seen in Figure \ref{fig-k55}. 
\end{proof}

\begin{figure}[tbp]
\centering
\includegraphics[scale=0.9]{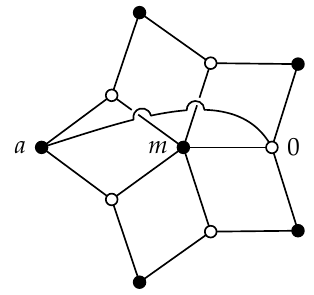}
\caption{$K_{5,5}$ in $S_2$ needs at most two crossings.}
\label{fig-k55}
\end{figure}

\subsection{The diamond sum operation}

Let $\psi\colon G \to S$ and $\psi'\colon G' \to S'$ be two quadrangular embeddings, and let $v$ and $v'$ be vertices of $G$ and $G'$, respectively, that have the same degree. Following Figure \ref{fig-diamond}(a), let $D$ be a closed disk in the surface $S$ intersecting the embedded graph only at $v$ and its incident edges, such that the neighbors of $v$ are on the boundary of $D$. Let $D'$ be defined similarly for $S'$ and $v'$. Delete the interiors of $D$ and $D'$ and identify the resulting boundaries, like in Figure \ref{fig-diamond}(b), so that each vertex on the boundary of $D$ is identified with a unique vertex on the boundary of $D'$. We call the resulting quadrangular embedding a \emph{diamond sum} of $\psi$ and $\psi'$ at $v$ and $v'$. Since the diamond sum operation takes the connected sum of the two surfaces, the resulting embedding is orientable if and only if both of the original embeddings are orientable. 

\begin{figure}[tbp]
\centering
    \begin{subfigure}[b]{0.55\textwidth}
    \centering
        \includegraphics[scale=1]{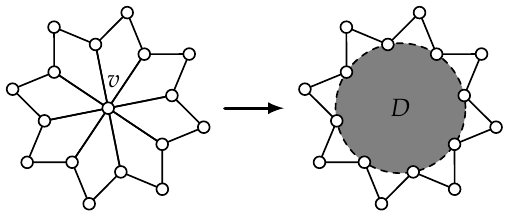}
        \caption{}
    \end{subfigure}
    \begin{subfigure}[b]{0.44\textwidth}
    \centering
        \includegraphics[scale=1]{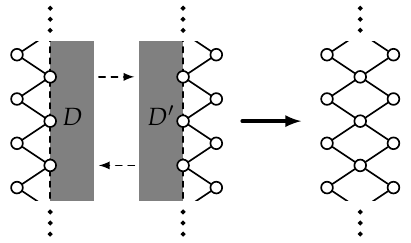}
        \caption{}
    \end{subfigure}
\caption{Excising disks (a) and merging their boundaries (b).}
\label{fig-diamond}
\end{figure}

The diamond sum operation is particularly suited for constructing minimum genus embeddings of complete bipartite graphs, but we can extend it to Kainen drawings, as well. Diamond sums are still possible as long as we can excise disks away from the crossings:

\begin{lemma}
Suppose that for $i = 1,2$, there exist Kainen drawings of $K_{m,n_i}$ with $k_i$ crossings in the surface of genus $g_i$, and both drawings have a right slacker. Then there is a Kainen drawing of $K_{m,n_1+n_2-2}$ with $k_1+k_2$ crossings in the surface of genus $g_1+g_2$.
\label{lem-diamondsum}
\end{lemma}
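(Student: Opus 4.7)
The plan is to reduce the statement to the classical diamond sum of quadrangular embeddings. First, I would convert each Kainen drawing of $K_{m,n_i}$ into a Kainen subembedding $\psi_i$ by deleting one edge from each crossing; the result is a quadrangular embedding of $K_{m,n_i} \setminus M_i$ in $S_{g_i}$, where $M_i$ is a set of $k_i$ edges that can be sequentially reintroduced into the drawing, each using exactly one crossing. Let $v_i$ denote the given right slacker of the $i$-th drawing. Since $v_i$ has responsibility $0$, no edge incident to $v_i$ lies in $M_i$, so $v_i$ appears in $\psi_i$ with its full degree $m$ and its original rotation intact.

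Next, I would form the diamond sum of $\psi_1$ and $\psi_2$ at $v_1$ and $v_2$; this is permissible because both vertices have the same degree $m$. The crucial point is that the closed disks $D_1$ and $D_2$ excised near $v_1$ and $v_2$ may be chosen small enough to avoid all of the finitely many curves used to reintroduce the edges of $M_1 \cup M_2$ in the original Kainen drawings. This is exactly what the slacker hypothesis buys: no reintroduction curve is incident to $v_i$, so by shrinking $D_i$ we can keep every such curve in the complement of $D_i$. The diamond sum then produces a quadrangular embedding of $K_{m,n_1+n_2-2} \setminus (M_1 \cup M_2)$ in the connected sum of $S_{g_1}$ and $S_{g_2}$, which equals $S_{g_1+g_2}$ by orientability of both summands.

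Finally, I would reinsert each edge of $M_1 \cup M_2$ along its original routing. Because these routings lie outside $D_1 \cup D_2$ and only traverse faces of $\psi_i$ that the diamond sum leaves unchanged, each reinsertion introduces exactly one crossing, yielding a drawing of $K_{m,n_1+n_2-2}$ in $S_{g_1+g_2}$ with $k_1 + k_2$ crossings. Substituting into the formula $\delta_g(K_{m,n}) = mn - 2(m + n - 2 + 2g)$ derived from Kainen's lower bound for a graph of girth $4$, a short computation shows that
$$\delta_{g_1}(K_{m,n_1}) + \delta_{g_2}(K_{m,n_2}) = \delta_{g_1+g_2}(K_{m,n_1+n_2-2}),$$
so $k_1 + k_2$ equals Kainen's lower bound for the new graph, and the drawing is a Kainen drawing. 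The main obstacle in the argument is ensuring that the diamond sum does not disturb any of the reintroduction routes, but this is exactly what the slacker hypothesis is designed to guarantee.
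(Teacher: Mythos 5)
Your proposal is correct and follows essentially the same route as the paper: delete one edge per crossing to obtain Kainen subembeddings, use the slacker hypothesis to excise disks around the right slackers that avoid every edge of nonzero responsibility, take the diamond sum, and reinsert the deleted edges along their original routings with one crossing each. Your explicit check that $\delta_{g_1}(K_{m,n_1})+\delta_{g_2}(K_{m,n_2})=\delta_{g_1+g_2}(K_{m,n_1+n_2-2})$ is a worthwhile addition that the paper leaves implicit.
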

\begin{proof}
For each drawing, a disk can be drawn around the neighborhood of a right slacker that avoids any edges with nonzero responsibility. Then, temporarily delete one of the edges of each crossing to obtain Kainen subembeddings of $K_{m,n_1}$ and $K_{m,n_2}$ in the same surfaces. Taking the diamond sum of these two embeddings by excising the aforementioned disks results in a Kainen subembedding of $K_{m,n_1+n_2-2}$ with $k_1+k_2$ missing edges. Since each deleted edge avoids the excised disks, it can be returned to the drawing using one crossing each, yielding a Kainen drawing. 
\end{proof}

Sometimes, the existence of the desired slacker depends on the specific drawing, but it is guaranteed when there are few crossings (recall that Kainen drawings of $K_{m,n}$ in $S_{h(m,n)}$ have at most 3 crossings) compared to the number of vertices:

\begin{proposition}
Suppose we have a drawing of $K_{m,n}$ with at most $k$ crossings. If $m > 2k$ (respectively, $n > 2k$), then there is a left (respectively, right) slacker.
\label{prop-pigeon}
\end{proposition}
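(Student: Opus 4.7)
The plan is to use a direct counting/pigeonhole argument on the endpoints of crossing edges. Recall that a left (resp.\ right) slacker is a left (resp.\ right) vertex none of whose incident edges participate in a crossing. So it suffices to bound the number of left vertices that \emph{do} have at least one incident crossing-edge.

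First I would observe that each crossing is a point where exactly two edges meet, and in $K_{m,n}$ every edge joins one left vertex to one right vertex. Therefore a single crossing can contribute positive responsibility to at most two left vertices (one per edge) and to at most two right vertices. Summing over the at most $k$ crossings, the total number of left vertices with positive responsibility is at most $2k$. If $m > 2k$, then by pigeonhole some left vertex $u$ has no incident edge participating in any crossing; that $u$ is a left slacker. The right case is symmetric.

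The proof is routine and has no real obstacle: the only thing to be slightly careful about is that the same edge (and hence the same vertex) might participate in several crossings, but this only makes the bound $2k$ more generous, not less, since we are counting distinct left endpoints of crossing-edges. No use of orientability, genus, or Kainen's lower bound is required, so the statement holds for arbitrary drawings in any surface.
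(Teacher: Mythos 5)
Your argument is correct and is exactly the paper's proof: each crossing involves two edges, hence at most two left (and two right) endpoints, so at most $2k$ vertices per side can fail to be slackers, and pigeonhole finishes. The paper states this in one sentence; your version just spells out the same counting in more detail.
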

\begin{proof}
Each crossing rules out two vertices from each side, so in total, at most $2k$ vertices per side cannot be slackers.
\end{proof}

The proof relies on the same inductive step used by Bouchet \cite{Bouchet-Diamond}:

\begin{corollary}
If there exists a Kainen drawing of $K_{m,n}$, where $m \geq 3$ and $n \geq 6$, then there exists a Kainen drawing of $K_{m,n+4}$.
\label{cor-induct}
\end{corollary}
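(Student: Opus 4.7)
The plan is to realize the inductive step via a diamond sum with a quadrangular embedding of $K_{m,6}$, mirroring Bouchet's classical construction. Specifically, I would build the promised Kainen drawing of $K_{m,n+4}$ by combining the given Kainen drawing of $K_{m,n}$ in $S_{h(m,n)}$ with a quadrangular embedding of $K_{m,6}$ in $S_{h(m,6)} = S_{m-2}$, which is itself a zero-crossing Kainen drawing since $(m-2)(6-2) = 4(m-2) \equiv 0 \pmod{4}$, so $t(m,6) = 0$. Such an embedding exists by Ringel's theorem on quadrangular embeddings of complete bipartite graphs whenever $(m-2)(n-2) \equiv 0 \pmod{4}$.

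Next, I would verify the slacker hypothesis required by Lemma~\ref{lem-diamondsum}. The drawing of $K_{m,6}$ has no crossings, so every right vertex is trivially a slacker. For the drawing of $K_{m,n}$, recall that Kainen drawings in $S_{h(m,n)}$ have at most $t(m,n) \leq 3$ crossings. When $n \geq 7$, Proposition~\ref{prop-pigeon} applies since $n > 6 \geq 2 t(m,n)$, yielding a right slacker. The boundary case $n = 6$ needs a separate one-line observation: then $t(m,6) = 0$ and there is nothing to check, so every right vertex is again a slacker. Either way, both drawings satisfy the hypothesis.

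With the slacker requirements met, I would apply Lemma~\ref{lem-diamondsum} with $(n_1, k_1, g_1) = (n, t(m,n), h(m,n))$ and $(n_2, k_2, g_2) = (6, 0, m-2)$. This produces a drawing of $K_{m, n+6-2} = K_{m, n+4}$ in $S_{h(m,n) + (m-2)}$ with exactly $t(m,n)$ crossings. To confirm this is a Kainen drawing, I would check the two small arithmetic identities
\begin{equation*}
h(m, n+4) = \left\lfloor \frac{(m-2)(n+2)}{4} \right\rfloor = \left\lfloor \frac{(m-2)(n-2)}{4} \right\rfloor + (m-2) = h(m,n) + (m-2),
\end{equation*}
so the surface is correct, and $(m-2)(n+2) \equiv (m-2)(n-2) \pmod{4}$, so $t(m, n+4) = t(m,n)$, matching the crossing count of the output drawing.

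There is no real obstacle in this argument, since the diamond sum lemma and the pigeonhole proposition have already done the structural work; the only mildly delicate point is handling $n = 6$, where the slacker guarantee from Proposition~\ref{prop-pigeon} is vacuous but is rescued by the fact that $K_{m,6}$-style drawings have no crossings at all. All remaining content is bookkeeping with the floor-function arithmetic for $h(m,n)$ and the residue calculation for $t(m,n)$.
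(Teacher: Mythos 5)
Your proof is correct and takes essentially the same approach as the paper: a diamond sum of the given Kainen drawing with a quadrangular embedding of $K_{m,6}$, with the slacker hypotheses supplied by Proposition~\ref{prop-pigeon} for $n\geq 7$ and by the observation that the $n=6$ drawing is an embedding. The explicit checks that $h(m,n+4)=h(m,n)+(m-2)$ and $t(m,n+4)=t(m,n)$ are left implicit in the paper's one-line argument but are exactly the right bookkeeping.
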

\begin{proof}
Note that Kainen drawings of $K_{m,n}$ and $K_{m,n+4}$ would have the same number of crossings, and that any Kainen drawing of $K_{m,n}$ has a right slacker by Proposition \ref{prop-pigeon} (if $n = 6$, then the drawing is an embedding). Apply Lemma \ref{lem-diamondsum} on the drawing of $K_{m,n}$ and a quadrangular embedding of $K_{m,6}$.  
\end{proof}

\subsection{The constructions}

Our proof of Theorem \ref{thm-kmn} begins with the subfamily $K_{3,n}$, which was already solved by Richter and \v{S}ir\'a\v{n}~\cite{RichterSiran-K3n}. We apply their construction to obtain Kainen drawings of $K_{3,n}$ and $K_{4,n}$, taking care to ensure the existence of a left slacker for each drawing of $K_{3,n}$ for future diamond sums. 

\begin{lemma}
For all $n \neq 5$, there exists a Kainen drawing of $K_{3,n}$ with a left slacker. Furthermore, when $n \geq 6$, the drawing also has a right slacker. 
\label{lem-k3n}
\end{lemma}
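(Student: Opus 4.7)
The plan is to combine a handful of small base cases with the diamond-sum induction of Corollary~\ref{cor-induct}. The Kainen lower bound $t(3,n) = (n-2) \bmod 4$ is at most $3$, and since there are only three left vertices, Proposition~\ref{prop-pigeon} does \emph{not} automatically supply a left slacker; it must be built into each drawing by hand. A right slacker, on the other hand, is supplied by Proposition~\ref{prop-pigeon} whenever $n \geq 7$ (since then $n > 6 \geq 2k$), and for $n = 6$ the drawing is a quadrangular embedding in which every vertex is a slacker.

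I would first dispose of the base cases $n \in \{3, 4, 6, 7, 8, 9\}$. For $n = 3, 4$ we draw in the plane with $1$ and $2$ crossings respectively; the optimal Zarankiewicz-style drawings can be chosen with all crossings concentrated on edges incident to only two of the three left vertices, leaving the third as a left slacker. For $n = 6$ any quadrangular embedding of $K_{3,6}$ in the torus works. For $n = 7, 8, 9$ we start from the toroidal drawings of Richter and \v{S}ir\'a\v{n}, which already meet Kainen's lower bound of $1$, $2$, and $3$ crossings; by inspection of their rotation systems, or with a small local flip in an appropriate face, one can arrange all crossings on only two of the three left vertices and so expose a left slacker.

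For the induction step, suppose $n \geq 10$. By the induction hypothesis there is a Kainen drawing of $K_{3,n-4}$ with a left slacker $a$ and (since $n - 4 \geq 6$) a right slacker. Corollary~\ref{cor-induct} forms the diamond sum of this drawing with a quadrangular embedding of $K_{3,6}$ at right slackers, producing a Kainen drawing of $K_{3,n}$. The $K_{3,6}$ summand is an embedding and contributes no crossings, so after the identification the merged left vertex containing $a$ inherits the zero responsibility of $a$ and of the corresponding vertex of $K_{3,6}$; thus it remains a left slacker in $K_{3,n}$. The required right slacker is provided by Proposition~\ref{prop-pigeon} since $n \geq 10 > 2 \cdot 3 \geq 2k$.

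The main obstacle lies in the toroidal base cases $n = 7, 8, 9$. With up to three crossings distributed among only three left vertices, a left slacker is not forced by any counting argument, and one has to reach inside the Richter--\v{S}ir\'a\v{n} rotation systems (or perform a small reroute) to ensure that some one left vertex escapes every crossing. Once this delicate step is handled, the diamond-sum machinery propagates the slacker property with essentially no additional work.
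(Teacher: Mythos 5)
Your overall architecture (small base cases plus the diamond-sum induction of Corollary~\ref{cor-induct}, with the observation that a left slacker survives a diamond sum taken at right slackers because the $K_{3,6}$ summand contributes no crossings) is sound, and your induction step for $n \geq 10$ is correct. The problem is exactly where you locate it: the base cases $n = 8$ and $n = 9$. There you need toroidal drawings of $K_{3,8}$ and $K_{3,9}$ with $2$ and $3$ crossings in which every crossing avoids one fixed left vertex, and you only assert that this ``can be arranged'' by inspecting or locally modifying the Richter--\v{S}ir\'a\v{n} drawings. With two or three crossings spread over only three left vertices, no counting argument forces a left slacker, so this assertion is the entire content of the lemma in those cases and cannot be left unverified. (For $n = 7$ a single crossing touches at most two left vertices, so a left slacker is automatic; $n = 3, 4, 6$ are fine as you describe.)

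The paper closes this gap with a single uniform construction that also makes your induction unnecessary: start from a quadrangular embedding of $K_{3,4k+2}$ and ``double'' up to three of its degree-$3$ right vertices, each doubling adding one right vertex and one crossing, with each new vertex placed so that its crossing involves only edges at the two left vertices other than a fixed vertex $c$. This yields Kainen drawings of $K_{3,4k+2+j}$ for $j = 0, 1, 2, 3$ with $c$ a left slacker, for all $k \geq 0$ at once (a right slacker for $n \geq 6$ then comes from Proposition~\ref{prop-pigeon}); the only failure is $K_{3,5}$, which would have to be built by doubling three of the two right vertices of $K_{3,2}$, and which is indeed a genuine exception. If you prefer to keep your inductive scheme, you can repair it by producing the $n = 7, 8, 9$ base cases the same way, doubling one, two, or three right vertices of the toroidal quadrangular embedding of $K_{3,6}$.
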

\begin{proof}
We note that, for $k \geq 0$, the following relationship holds between consecutive values of the functions $h(m,n)$ and $H(m,n)$:
$$H(3, 4k+2) = h(3, 4k+2) = h(3, 4k+3) = h(3, 4k+4) = h(3, 4k+5).$$
One way of achieving the desired drawings on the surface of this genus is to start with a quadrilateral embedding of $K_{3,4k+2}$ and ``double'' up to three right vertices, as illustrated in Figure~\ref{fig-doubling}(a). There are enough right vertices to double except in the case of $K_{3,5}$ (which would have been built from $K_{3,2}$).

The doubled vertices can each be placed so that some left vertex, like $c$ in Figure \ref{fig-doubling}, ends up with no responsibility. That vertex is our left slacker, and when $n \geq 6$, some right vertex is a slacker by Proposition \ref{prop-pigeon}.
\end{proof}

\begin{figure}[tbp]
\centering
    \begin{subfigure}[b]{0.49\textwidth}
    \centering
        \includegraphics[scale=0.9]{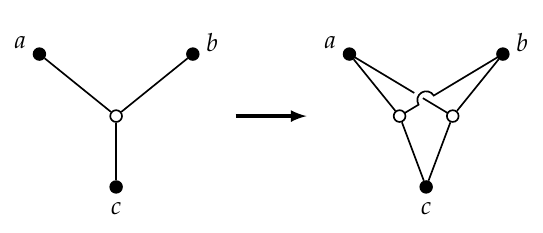}
        \caption{}
    \end{subfigure}
    \begin{subfigure}[b]{0.49\textwidth}
    \centering
        \includegraphics[scale=0.9]{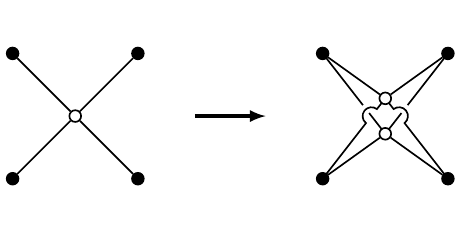}
        \caption{}
    \end{subfigure}
\caption{Doubling vertices of degree 3 (a) and 4 (b).}
\label{fig-doubling}
\end{figure}

Doubling can be performed on vertices of arbitrary degree, but if we wish to have responsibility at most 1 for each edge, then the only other application is the 4-valent case:

\begin{lemma}
There are Kainen drawings of $K_{4,n}$ for all $n \geq 2$. 
\label{lem-k4n}
\end{lemma}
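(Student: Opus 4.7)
The plan is to split into cases based on the parity of $n$, mirroring the proof of Lemma~\ref{lem-k3n} but using the $4$-valent doubling depicted in Figure~\ref{fig-doubling}(b).

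If $n$ is even, then $(4-2)(n-2) \equiv 0 \pmod{4}$, so $H(4,n) = h(4,n)$. Ringel's theorem \eqref{eq-genus-kmn} then supplies a quadrangular embedding of $K_{4,n}$ in $S_{h(4,n)}$, which is vacuously a Kainen drawing with $0$ crossings (matching $t(4,n) = 0$).

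If $n$ is odd, I note that $h(4,n) = h(4,n-1) = (n-3)/2$ while $t(4,n) = 2$. Starting from a quadrangular embedding of $K_{4,n-1}$ in $S_{h(4,n)}$, the idea is to pick any right vertex $v$ and double it into $v, v'$ as in Figure~\ref{fig-doubling}(b). If the rotation at $v$ is $a_1, a_2, a_3, a_4$, Property~\ref{prop-manifold} guarantees that the four faces incident to $v$ are quadrilaterals cycling through these neighbors. Placing the new right vertex $v'$ inside one of these faces, say $[v, a_1, \ast, a_2]$, allows the edges $(v', a_1)$ and $(v', a_2)$ to be drawn with no crossings, while $(v', a_3)$ and $(v', a_4)$ can each be routed through one of the adjacent quadrilaterals, each crossing a single, distinct edge incident to $v$. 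This produces a drawing of $K_{4,n}$ with exactly two crossings in $S_{h(4,n)}$.

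Finally, I would confirm the drawing is Kainen by checking that no edge has responsibility exceeding one. The two new edges from $v'$ each participate in a single crossing, and the two edges incident to $v$ that are crossed are distinct, so all responsibilities are at most $1$ and Kainen's lower bound $\delta_{h(4,n)}(K_{4,n}) = 2$ is achieved with equality. I do not anticipate a serious obstacle here: the arithmetic of $h(4,n)$ and $t(4,n)$ lines up exactly with the cost of one $4$-valent doubling, and the doubling operation itself is already in hand from Lemma~\ref{lem-k3n}. The only point that warrants care is arranging the two new crossings to involve distinct edges around $v$, which is permitted by the cyclic arrangement of the four quadrilaterals incident to any degree-$4$ vertex in a quadrangular embedding.
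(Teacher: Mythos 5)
Your proof is correct and follows essentially the same route as the paper: a quadrangular embedding for even $n$, and for odd $n$ the $4$-valent doubling of a right vertex of $K_{4,n-1}$ (Figure~\ref{fig-doubling}(b)), producing two crossings on distinct edges. The genus and crossing-count arithmetic you check is exactly what makes the paper's one-line argument work.
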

\begin{proof}
For even $n \geq 2$, $K_{4,n}$ has a quadrilateral embedding. We can obtain a Kainen drawing of $K_{4,n+1}$ with two crossings by the doubling operation shown in Figure \ref{fig-doubling}(b).
\end{proof}

For all remaining complete bipartite graphs, we find Kainen drawings by repeatedly applying the diamond sum operation. 

\begin{lemma}
There are Kainen drawings of $K_{5,n}$ for all $n \geq 6$. 
\label{lem-k5n}
\end{lemma}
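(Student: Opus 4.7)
The plan is to iterate Corollary~\ref{cor-induct} from a small set of base cases. Since that corollary turns a Kainen drawing of $K_{5,n}$ into one of $K_{5,n+4}$, it suffices to exhibit Kainen drawings of $K_{5,n}$ for $n \in \{6,7,8,9\}$, each possessing a right slacker so that subsequent applications of the corollary can chain. The case $n = 6$ is immediate: since $(5-2)(6-2) \equiv 0 \pmod{4}$, $K_{5,6}$ has a quadrangular embedding in $S_3$ by Ringel's formula, yielding a Kainen drawing with $0 = t(5,6)$ crossings in which every vertex is a slacker.

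For $n \in \{7, 8\}$ the plan exploits the numerical identity $t(3,n) + t(4,n) = t(5,n)$ (namely $1+2=3$ and $2+0=2$), which suggests combining Kainen drawings of $K_{3,n}$ and $K_{4,n}$ into one of $K_{5,n}$. Formally, regarding these graphs as $K_{n,3}$ and $K_{n,4}$ (swapping the roles of the two vertex classes) and applying Lemma~\ref{lem-diamondsum} with common parameter $m = n$, the diamond sum at a pair of right vertices of degree $n$ yields a Kainen drawing of $K_{n,\,3+4-2} = K_{5,n}$ with exactly $t(3,n)+t(4,n)$ crossings. A right slacker in $K_{n,3}$ is precisely a left slacker in $K_{3,n}$, which Lemma~\ref{lem-k3n} supplies. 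For $K_{4,n}$ the corresponding left slacker must be verified: for even $n$ the quadrangular embedding makes every vertex a slacker, while for odd $n$ one inspects the doubling construction of Lemma~\ref{lem-k4n} and confirms that its two crossings can be arranged to involve edges incident to at most three of the four left vertices, leaving the fourth as a slacker. A right slacker in the resulting $K_{5,n}$ is then guaranteed by Proposition~\ref{prop-pigeon} since $n \geq 7 > 2\cdot 3$.

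The main obstacle is the remaining base case $n = 9$: the analogous sum gives $t(3,9) + t(4,9) = 5$, well above the target $t(5,9) = 1$, and no decomposition of the form $K_{5,a} \oplus K_{5,b}$ with $a + b = 11$ uses only Kainen drawings, since $K_{5,5}$ itself is non-Kainen. The plan is therefore to produce a Kainen subembedding directly by exhibiting a quadrangular embedding of $K_{5,9} - e$ in $S_5$---a ``nearly complete'' bipartite graph of the kind treated in \cite{MPP-NearlyComplete, Singh-Settling}---and verifying that the missing edge $e$ can be drawn into it with exactly one crossing. Concretely I would specify a rotation system for $K_{5,9} - e$ and check quadrangularity by face tracing, then locate a pair of faces of the form $[\ldots, a, 0, \ldots]$ and $[\ldots, 0, a, \ldots]$ (where $e = (a, 0)$) whose shared edge can serve as the single allowed crossing, using the manifold and orientability properties invoked in Section~\ref{sec-k55} as design constraints rather than as obstructions. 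Once this drawing is in hand, Proposition~\ref{prop-pigeon} automatically supplies a right slacker since $9 > 2$.
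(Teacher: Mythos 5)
Your skeleton is the paper's: induct via Corollary~\ref{cor-induct} from base cases $n=6,\dotsc,9$, take $K_{5,6}$ from its quadrangular embedding, and build $K_{5,8}$ as a diamond sum of $K_{3,8}$ (two crossings, left slacker from Lemma~\ref{lem-k3n}) with a quadrangular embedding of $K_{4,8}$ --- that last step is verbatim what the paper does. But two of your four base cases are not actually established. For $n=9$ you only describe a plan: ``I would specify a rotation system for $K_{5,9}-e$ and check quadrangularity by face tracing.'' No rotation system is given, so the existence of a Kainen subembedding of $K_{5,9}$ is asserted, not proved; this is precisely the object the paper exhibits explicitly (the polyhedral net for $K_{5,9}-K_2$ in Figure~\ref{fig-k59}), and it is the entire content of that base case. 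Your correct observation that no diamond-sum decomposition can reach $t(5,9)=1$ makes an explicit construction unavoidable, so the omission is fatal rather than cosmetic.

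For $n=7$ you depart from the paper (which again gives an explicit net, for $K_{5,7}-K_{1,3}$) and instead diamond-sum $K_{3,7}$ with $K_{4,7}$. The arithmetic ($t(3,7)+t(4,7)=1+2=3=t(5,7)$ and $h(3,7)+h(4,7)=1+2=3=h(5,7)$) is fine, and the slacker on the $K_{3,7}$ side comes from Lemma~\ref{lem-k3n}. The problem is the required slacker among the \emph{four} degree-$7$ vertices of $K_{4,7}$: with two crossings, Proposition~\ref{prop-pigeon} needs more than $2k=4$ vertices on that side, so it gives nothing, and the doubling of Lemma~\ref{lem-k4n} does not obviously help. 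In that construction the new right vertex $v'$ sits in a face $[a,v,b,\cdot]$ and its two remaining edges $(v',c)$ and $(v',d)$ cross $(v,b)$ and $(v,a)$, so the two crossings touch all four left vertices $a,b,c,d$ and there is no left slacker. Your claim that the crossings ``can be arranged'' to touch only three left vertices would require rerouting one new edge across an edge not incident with $v$ into a face that happens to contain the right target corner --- a condition on the particular quadrangular embedding of $K_{4,6}$ that you neither state nor check. Until that is verified (or an explicit Kainen subembedding of $K_{5,7}$ is supplied, as in the paper), the $n=7$ case is also open.
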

\begin{proof}
We induct on $n$ using Corollary \ref{cor-induct}, with base cases $n = 6, \dotsc, 9$. The graph $K_{5,6}$ has a quadrilateral embedding. Kainen drawings of $K_{5,7}$ and $K_{5,9}$ are shown in Figures \ref{fig-k57} and \ref{fig-k59}. A Kainen drawing of $K_{5,8}$ with two crossings can be found via a diamond sum of $K_{3,8}$ and $K_{4,8}$.
\end{proof}

\begin{figure}[tbp]
\centering
\includegraphics[scale=0.9]{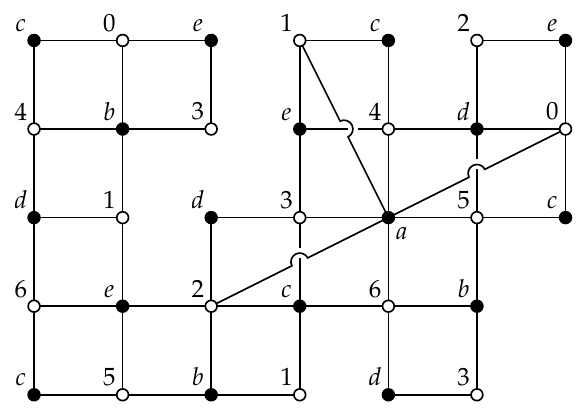}
\caption{A polyhedral net for $K_{5,7}-K_{1,3}$.}
\label{fig-k57}
\end{figure}

\begin{figure}[tbp]
\centering
\includegraphics[scale=0.9]{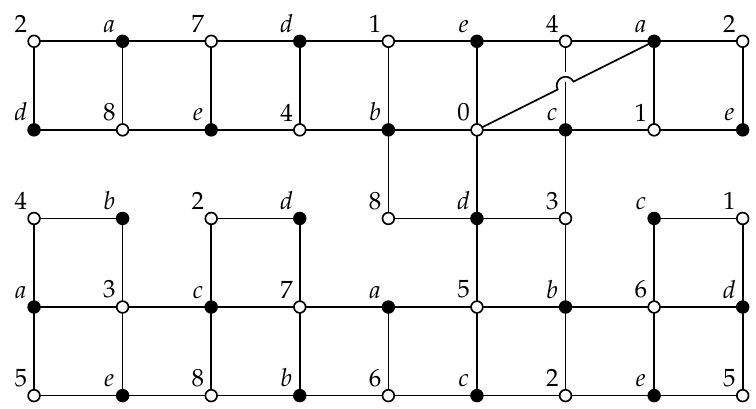}
\caption{A polyhedral net for $K_{5,9}-K_2$.}
\label{fig-k59}
\end{figure}

\begin{lemma}
There exist Kainen drawings of $K_{m,n}$ for all $m,n \geq 6$. 
\end{lemma}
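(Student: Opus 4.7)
The plan is to prove the lemma by induction on $m + n$, reducing any pair $(m, n)$ with $m, n \geq 6$ to a finite list of base cases in which both $m$ and $n$ lie in $\{6, 7, 8, 9\}$. The inductive engine is Corollary \ref{cor-induct}, which extends a Kainen drawing of $K_{m, n}$ to one of $K_{m, n+4}$; by the left-right symmetry $K_{m, n} \cong K_{n, m}$, the same argument extends $K_{m, n}$ to $K_{m+4, n}$. Since any Kainen drawing of $K_{m, n}$ has at most three crossings, Proposition \ref{prop-pigeon} produces a right slacker whenever $n \geq 7$ and a left slacker whenever $m \geq 7$; in the remaining border cases $m = 6$ or $n = 6$, a Kainen drawing is a quadrangular embedding and every vertex is trivially a slacker. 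Hence, for all $m, n \geq 6$ the hypotheses of Corollary \ref{cor-induct} and its mirror form are satisfied, and both inductive steps are always available.

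The base cases are the ten pairs $(m_0, n_0)$ with $6 \leq m_0 \leq n_0 \leq 9$. Five of these, namely $(6, 6), (6, 7), (6, 8), (6, 9),$ and $(8, 8)$, satisfy $(m_0 - 2)(n_0 - 2) \equiv 0 \Mod{4}$, so Ringel's formula \eqref{eq-genus-kmn} yields quadrangular embeddings, which are trivially Kainen drawings. The remaining five cases will be built by a single application of the diamond sum at left slackers (the left-right mirror of Lemma \ref{lem-diamondsum}). Specifically, I would form each of $K_{7, 7}$, $K_{7, 8}$, and $K_{7, 9}$ as the diamond sum of $K_{3, n_0}$, which carries a left slacker by Lemma \ref{lem-k3n} since $n_0 \neq 5$, with the quadrangular embedding of $K_{6, n_0}$; I would build $K_{8, 9}$ from two copies of $K_{5, 9}$, each of which has a left slacker by Proposition \ref{prop-pigeon} since $t(5, 9) = 1$ and $5 > 2$; and I would build $K_{9, 9}$ from $K_{5, 9}$ and $K_{6, 9}$.

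The main bookkeeping task, and the step most apt to hide a mistake, is verifying that each of these diamond sums lands in the predicted surface and achieves Kainen's lower bound; that is, that $h(m_1, n_0) + h(m_2, n_0) = h(m_0, n_0)$ and $t(m_1, n_0) + t(m_2, n_0) = t(m_0, n_0)$ for each of the five nontrivial base cases. These identities reduce to a short case analysis of the floor functions based on the residues of $m_0$ and $n_0$ modulo $4$, and they hold in every case. Once these checks are in place, the diamond sums produce the required base cases and the double induction completes the proof. The only other point worth remarking is the appeal to the mirror form of Lemma \ref{lem-diamondsum}: this is immediate from the symmetry of $K_{m, n}$ in its left and right vertex classes, so no new argument is needed.
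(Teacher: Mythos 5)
Your proposal is correct and follows essentially the same route as the paper: reduce to the base cases $m,n \in \{6,7,8,9\}$ via Corollary \ref{cor-induct} and its left--right mirror, dispose of the cases admitting quadrangular embeddings, and assemble the rest by diamond sums at slackers guaranteed by Lemma \ref{lem-k3n} and Proposition \ref{prop-pigeon}. The only deviation is cosmetic --- you build $K_{8,9}$ from two copies of $K_{5,9}$ where the paper uses $K_{6,8}$ and $K_{5,8}$ --- and both choices check out.
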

\begin{proof}
By Corollary \ref{cor-induct}, it suffices to find Kainen drawings for $m,n \in \{6,7,8,9\}$. When $m = 6$, there exists a quadrangular embedding. For $m = 7$, a Kainen drawing of $K_{7,n}$ can be found by taking a diamond sum of $K_{6,n}$ and $K_{3,n}$. For $m = 9$, $n = 8,9$, we use a diamond sum of $K_{6,n}$ and $K_{5,n}$. The remaining case $K_{8,8}$ has a quadrilateral embedding. 
\end{proof}

\section{Nonorientable analogues}

The \emph{nonorientable genus} of $G$, denoted $\tilde{\gamma}(G)$, is defined to be the smallest value $k$ where $G$ has an embedding in the surface $N_k$, the sphere with $k$ crosscaps. For convenience, we allow $\tilde{\gamma}(G) = 0$, even though the sphere is orientable. The nonorientable genus of the complete and complete bipartite graphs is

\begin{equation}
\tilde{\gamma}(K_n) = H'(n) := \left\lceil \frac{(n-3)(n-4)}{6}\right\rceil, \text{~for~} n \geq 3, n \neq 7 \text{~and~}
\label{eq-genus-non-kn}
\end{equation}
\begin{equation}
\tilde{\gamma}(K_{m,n}) = H'(m,n) := \left\lceil \frac{(m-2)(n-2)}{2}\right\rceil, \text{~for~} m,n \geq 2.
\label{eq-genus-non-kmn}
\end{equation}

The analogues of Kainen's original conjectures would be to ask about the surface crossing numbers of these graphs in the surfaces of nonorientable genus
$$h'(n) := \left\lfloor \frac{(n-3)(n-4)}{6}\right\rfloor, \text{~and~} h'(m,n) := \left\lfloor \frac{(m-2)(n-2)}{2}\right\rfloor.$$
We use the notation $\tilde{\CR}_{k}(G)$ to denote the surface crossing number of $G$ in the surface $N_{k}$.

\subsection{Complete graphs}\label{sec-kn-non}

\begin{theorem}
The surface crossing number of the complete graph $K_n$, $n \geq 3$, in the nonorientable surface of genus $h'(n)$ is $$\tilde{\CR}_{h'(n)}(K_n) = \frac{(n-3)(n-4)}{2} \bmod{3},$$ except when $n = 7,8$. The crossing number of $K_7$ in $N_2$ is $1$, and the crossing number of $K_8$ in $N_3$ is $2$. 
\label{thm-non-kn}
\end{theorem}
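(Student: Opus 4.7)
The plan is to mirror the orientable proof of Theorem \ref{thm-kn}, working residue by residue modulo $6$. First, I would verify that the nonorientable analogue of Kainen's lower bound (Proposition 2.2 with $\chi(N_k)=2-k$) simplifies to $\binom{n}{2}-3(n-2)-3h'(n) = \frac{(n-3)(n-4)}{2} - 3\lfloor (n-3)(n-4)/6\rfloor = \frac{(n-3)(n-4)}{2}\bmod 3$, and check that this equals $0$ when $n\equiv 0,1,3,4\pmod 6$ and $1$ when $n\equiv 2,5\pmod 6$ (the value $2$ never occurs). The construction task then splits into these two residue groups.

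For the zero-crossing residues $n\equiv 0,1,3,4\pmod 6$ with $n\neq 7$, we have $H'(n)=h'(n)$, and the nonorientable Map Color Theorem of Ringel supplies triangular embeddings of $K_n$ in $N_{H'(n)}$, each automatically a Kainen drawing with zero crossings. For the one-crossing residues $n\equiv 2,5\pmod 6$ with $n\neq 8$, the goal is a triangular subembedding of $K_n$ in $N_{h'(n)}$, that is, a triangular embedding of $K_n-K_2$ in which the two endpoints of the missing edge share a face corner, so the edge can be reintroduced with a single crossing. I would build such embeddings via (possibly nonorientable) current graphs. For $n\equiv 5\pmod{12}$ and $n\equiv 2\pmod{12}$, the orientable constructions underlying Lemmas \ref{lem-case5} and \ref{lem-case2} already produce adjacent vortices and should adapt to the nonorientable setting, with edge signatures chosen to land in $N_{h'(n)}$. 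For $n\equiv 11,8\pmod{12}$, new families of nonorientable current graphs are needed; however, the crossing target drops from four (Lemmas \ref{lem-case11} and \ref{lem-case8}) to one, so the elaborate multi-handle modifications of those orientable lemmas should not be required.

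The exceptional cases $K_7$ and $K_8$ are handled separately. For $K_7$, Franklin's classical result $\tilde\gamma(K_7)=3$ immediately forces $\tilde{\CR}_2(K_7)\geq 1$, and a concrete one-crossing drawing is easy to exhibit. For $K_8$, Kainen's lower bound is only $1$, so the main step is to rule that value out: in the spirit of Huneke's argument for $K_9$ in $S_2$ and the $K_{5,5}$ analysis in Section \ref{sec-k55}, I would show by a finite manifold/orientability case check that no triangular embedding of $K_8-e$ in $N_3$ can be completed into a one-crossing drawing of $K_8$. This yields $\tilde{\CR}_3(K_8)\geq 2$, and matching two-crossing drawings are easy to produce from, e.g., any known embedding of $K_8$ in $N_4$ by contracting a handle into a crosscap.

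I expect the principal obstacle to be the residues $n\equiv 8,11\pmod{12}$: although the target drops from four crossings to one, one still needs infinite families of nonorientable current graphs whose derived triangular embeddings of $K_n-K_2$ place the two missing endpoints adjacent in some rotation, and the bookkeeping for edge signatures on ladder-type current graphs adds a layer of difficulty absent from the orientable constructions. A secondary, smaller obstacle is the finite lower bound $\tilde{\CR}_3(K_8)\geq 2$, which requires enumerating quadrilateral-like constraints on candidate triangular embeddings of $K_8-e$ in $N_3$ much as is done for $K_{5,5}$ in Section \ref{sec-k55}.
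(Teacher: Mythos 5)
Your overall architecture --- the lower-bound computation, the zero-crossing residues $n\equiv 0,1,3,4\pmod 6$ handled by the nonorientable Map Color Theorem, the reduction of the one-crossing residues $n\equiv 2,5\pmod 6$ to triangular embeddings of $K_n-K_2$ with the two missing endpoints adjacent, and the separate treatment of $K_7$ and $K_8$ --- matches the paper. But the heart of the proof, actually producing those triangular embeddings of $K_n-K_2$, is where your plan has gaps, and some of your specific expectations are wrong. For $n\equiv 5,11\pmod{12}$ the paper points out that the known index~1 nonorientable current graphs (Section 8.3 of Ringel) already generate triangular embeddings of $K_n-K_2$, but with the two vortices \emph{not} adjacent; the new contribution is a lift to index~3 precisely to force adjacency, so the orientable Case~5 construction does not simply ``adapt.'' For $n\equiv 2\pmod{12}$ the paper does not use current graphs at all: it invokes Ringel's recursive gluing construction (from a nonorientable triangular embedding of $K_{2t+2}-K_2$ to one of $K_{4t+2}-K_2$), upgraded to preserve the one-crossing property by choosing the excised vertex to be a slacker, with an explicit rotation system for $K_{14}$ and the $12s+8$ family as base cases. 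For $n\equiv 8\pmod{12}$ no new construction is needed: the existing nonorientable triangular embeddings of $K_{12s+8}-K_2$ of Korzhik and of Sun already happen to be Kainen subembeddings. Your plan as written would stall exactly at the step you flag as the principal obstacle.

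On the exceptional cases: the lower bound for $K_8$ does not require a new Huneke-style or $K_{5,5}$-style case analysis. A one-crossing drawing of $K_8$ in $N_3$ would yield an embedding of $K_8-K_2$ in $N_3$, which by Euler counting must be triangular, and Ringel proved classically that no such embedding exists; the paper simply cites this. (Franklin's theorem handles $K_7$ as you say.) Your proposed route to the matching two-crossing drawing of $K_8$, ``contracting a handle into a crosscap'' from an embedding in $N_4$, is not a well-defined operation; the paper just exhibits explicit drawings of $K_7$ and $K_8$ with one and two crossings.
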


The minimum triangulations problem for nonorientable surfaces has two counterexamples: Franklin \cite{Franklin-SixColor} showed that $K_7$ does not embed in the Klein bottle, and Ringel \cite{Ringel-K8} showed that $K_8-K_2$ does not embed in $N_3$. The two drawings of $K_7$ and $K_8$ shown in Figure \ref{fig-nonk7k8}, with 1 and 2 crossings, respectively, are subsequently optimal. When $n \equiv 0, 1, 3, 4, 6, 7, 9, 10 \Mod{12}$, and $n \geq 9$, there is a nonorientable triangular embedding of $K_n$. Thus, to prove Theorem \ref{thm-non-kn}, it suffices to show that when $n \equiv 2, 5, 8, 11 \Mod{12}$, $n \neq 8$, there is a triangular embedding of $K_n-K_2$ in a nonorientable surface that is a Kainen subembedding.  

\begin{figure}[tbp]
\centering
\includegraphics[scale=0.9]{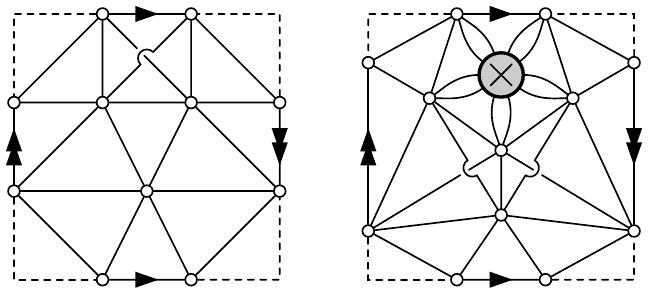}
\caption{$K_7$ and $K_8$ drawn in $N_2$ and $N_3$, respectively.}
\label{fig-nonk7k8}
\end{figure}

In the proof of the nonorientable Map Color Theorem presented in Ringel \cite{Ringel-MapColor}, some of the embeddings were constructed recursively by gluing together smaller embeddings, similar to the diamond sum operation we previously used in Section \ref{sec-kmn}. We borrow one such construction:

\begin{lemma}[Ringel \cite{Ringel-MapColor}, Theorem 10.6]
If there exists a nonorientable triangular embedding of $K_{2t+2}-K_2$, then there exists a nonorientable triangular embedding of $K_{4t+2}-K_2$. 
\end{lemma}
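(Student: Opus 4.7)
The plan is to use a doubling surgery in the spirit of Ringel's additive constructions. Let $\psi$ be the given nonorientable triangular embedding of $K_{n}-K_2$ with $n = 2t+2$, missing edge $\{x,y\}$, and remaining vertex set $U = \{u_1,\ldots,u_{n-2}\}$, so $|E(\psi)| = 2t^2 + 3t$. The target $K_{4t+2} - K_2$ has $2n-2$ vertices and $\binom{2n-2}{2}-1 = 8t^2+6t$ edges, so one must introduce $(n-2)^2 = 4t^2$ new cross-edges between a second copy of $U$ and the first.

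First I would take a disjoint copy $\psi'$ of $\psi$ on another nonorientable surface, with remaining vertices relabeled $u_1',\ldots,u_{n-2}'$ but the same labels $x, y$ for the special vertices. I would then perform a surgery at $x$: excise a small disk neighborhood of $x$ in each copy, and identify the two boundary circles via an interleaving that alternates between arcs inherited from the two copies. This merges the two copies of $x$ into a single vertex of degree $2(n-2)$ whose rotation interleaves the rotations at $x$ from $\psi$ and from $\psi'$. Do the analogous surgery at $y$. The resulting closed surface is a connected sum of the two nonorientable summands, hence nonorientable.

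Immediately after the surgery, every face of $\psi$ or $\psi'$ not incident to $x$ or $y$ remains a triangle in the new embedding, but each triangle incident to $x$ from $\psi$ has been merged across the identification curve with a triangle incident to $x$ from $\psi'$, producing a quadrilateral face of the form $(x, u_a, u_c', u_b)$; the same holds at $y$. Inserting a diagonal cross-edge into each such quadrilateral triangulates it and creates a cross-edge $(u_a, u_c')$ or $(u_b, u_c')$. The interleavings at $x$ and $y$ must be chosen so that these diagonals collectively realize all $4t^2$ cross-edges exactly once. A counting check then yields $V = 2n-2$, $E = 8t^2+6t$, $F = (16t^2+12t)/3$, matching a triangular embedding of $K_{4t+2}-K_2$ on a nonorientable surface.

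The main obstacle is the combinatorial design of the interleaving: one must specify a rule that, using only the local rotations of $\psi$ at the special vertices $x$ and $y$, simultaneously (i) introduces every cross-edge exactly once, (ii) triangulates every merged quadrilateral, and (iii) handles both surgeries consistently so that no adjacency is double-counted. This is the heart of Ringel's additive method; for a single fixed $\psi$ it reduces to a case analysis on the rotations at $x$ and $y$ and a face-tracing verification. Nonorientability of the final surface is inherited for free from $\psi$, and the Euler characteristic calculation pins down the exact nonorientable genus of the resulting triangulation.
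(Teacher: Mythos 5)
Your approach does not match the paper's and, more importantly, it cannot work as described. The paper's proof is a single diamond-sum-style surgery between the \emph{given} embedding of $K_{2t+2}-K_2$ and a \emph{fixed auxiliary} triangular embedding of $(K_{2t+1}\cup K_1)+C_{2t+1}$ (cited from Ringel): one deletes the $K_1$-vertex $v$ from the auxiliary embedding and a vertex $v'$ of full degree from the given embedding (both have degree $2t+1$), and identifies the resulting boundary links. The auxiliary graph already carries essentially all of the ``cross'' adjacencies, so no new edges need to be invented. You instead try to double the given embedding and create the cross-edges by local surgeries at $x$ and $y$.

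The fatal problem is an Euler characteristic count. Two copies of a triangular embedding of $K_{2t+2}-K_2$ have total Euler characteristic $4t+4-\tfrac{4t^2+6t}{3}$, while a triangular embedding of $K_{4t+2}-K_2$ lives on a surface of Euler characteristic $4t+2-\tfrac{8t^2+6t}{3}$; the deficit is $2+\tfrac{4t^2}{3}$, which grows quadratically in $t$. Each connected-sum-at-a-vertex surgery lowers $\chi$ by only a constant, and inserting a diagonal into a quadrilateral face does not change $\chi$ at all, so no bounded number of local surgeries plus diagonal insertions can reach the target surface. Relatedly, the merged vertices $x$ and $y$ have degree $4t$, so your interleaving can create only $O(t)$ new faces there, yet you need to realize $(2t)^2=4t^2$ cross-edges; the quadrilaterals you describe (of the form $(x,u_a,u_c',u_b)$) would in any case already require cross-edges on their boundaries, so they cannot exist before those edges are added. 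Finally, you explicitly defer the ``combinatorial design of the interleaving,'' which you correctly identify as the heart of the matter --- so even setting aside the counting obstruction, the proof is not complete. The fix is to abandon the doubling idea and glue against a separate, much larger auxiliary triangulation, as the paper does.
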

\begin{proof}
Ringel \cite{Ringel-MapColor} showed that there exists a triangular embedding $\phi$ of the graph $(K_{2t+1}\cup K_1)+C_{2t+1}$, $t \geq 2$, where $G+H$ is the graph join of $G$ and $H$, and $C_n$ is the cycle graph on $n$ vertices. Let $v$ be the vertex corresponding to $K_1$. Given a nonorientable triangular embedding $\phi'$ of $K_{2t+2}-K_2$, let $v'$ be any vertex that is adjacent to every vertex. By deleting $v$ and $v'$ and their incident faces from the embedded surfaces and identifying the resulting boundaries, we obtain a nonorientable triangular embedding of $K_{4t+2}-K_2$. 
\end{proof}

The proof can easily be extended to Kainen drawings by making sure $v'$ is a vertex with no responsibility:

\begin{corollary}
If there exists a nonorientable Kainen drawing of $K_{2t+2}$ with one crossing, then there exists one of $K_{4t+2}$ with one crossing.
\label{cor-ringelinduct}
\end{corollary}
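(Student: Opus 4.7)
The plan is to mimic the construction of the preceding lemma, but to choose the distinguished vertex $v'$ on the $K_{2t+2}-K_2$ side so that the single crossing of the original Kainen drawing of $K_{2t+2}$ survives the gluing unharmed. Concretely, I would start with a nonorientable Kainen drawing of $K_{2t+2}$ with one crossing and delete one of the two edges meeting at the crossing to obtain a nonorientable triangular embedding $\phi'$ of $K_{2t+2}-K_2$, where the deleted edge is the missing edge of the Kainen subembedding.

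Next I would select $v'$. The two edges involved in the crossing touch at most four vertices, so (for $t \geq 2$, the smaller cases being handled directly) at least $2t-2$ vertices of $K_{2t+2}$ are slackers, and every slacker is automatically not an endpoint of the missing edge. Pick any such slacker and designate it as the vertex $v'$ adjacent to every other vertex of $K_{2t+2}-K_2$, as required by the hypothesis of the preceding lemma. Then apply the lemma verbatim: starting from the triangular embedding $\phi$ of $(K_{2t+1}\cup K_1)+C_{2t+1}$ with its distinguished vertex $v$, excise small disks around $v$ and $v'$ together with their incident faces and identify the resulting boundary cycles. The output is a nonorientable triangular embedding of $K_{4t+2}-K_2$ whose missing edge is exactly the missing edge of $\phi'$.

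Finally, because $v'$ was chosen to be a slacker, the small region in which the original Kainen drawing routed the missing edge across a single edge is disjoint from the excised disk around $v'$. This region therefore persists in the glued surface, so reinstating the missing edge with exactly one crossing yields the desired nonorientable Kainen drawing of $K_{4t+2}$. The only subtle point in the argument is guaranteeing the existence of the slacker $v'$, which is a pigeonhole argument analogous to Proposition \ref{prop-pigeon}; beyond that, no new geometric work is required, since the crossing and the excised disks can be arranged to occupy disjoint neighborhoods of the surface.
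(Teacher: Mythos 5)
Your proof is correct and takes essentially the same approach as the paper, which obtains the corollary in one line by applying Ringel's gluing lemma with $v'$ chosen to be a vertex of zero responsibility in the Kainen subembedding of $K_{2t+2}-K_2$. Your added details (the pigeonhole count showing a slacker exists, and the observation that the two faces hosting the crossing avoid $v'$ and hence survive the excision) merely make explicit what the paper leaves implicit.
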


\begin{lemma}
When $n \equiv 2,8 \Mod{12}$, $n > 8$, the surface crossing number of $K_n$ in the nonorientable surface of genus $h'(n)$ is 1.
\label{lem-case-non28}
\end{lemma}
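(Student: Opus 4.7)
The plan is to prove Lemma \ref{lem-case-non28} by exhibiting, for each $n$ in the stated range, a triangular embedding of $K_n - K_2$ in $N_{h'(n)}$ in which the two endpoints of the missing edge appear consecutively in the rotation of some common neighbor. Such an embedding is a Kainen subembedding: the missing edge can be reintroduced with a single crossing through one of the two triangular faces sharing that consecutive pair, matching Kainen's lower bound of $1$ for these residues.

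Because $4t+2 \not\equiv 8 \pmod{12}$ for every $t$, Corollary \ref{cor-ringelinduct} cannot produce the residue $n \equiv 8 \pmod{12}$, and I would handle this subcase by a direct infinite-family construction. Mimicking the strategy of Lemma \ref{lem-case8} but with a much tighter budget of one crossing instead of four, I would design a nonorientable current graph family --- patterned after the index-$2$ ladders of Figures \ref{fig-current-c2-even}--\ref{fig-current-c2-odd} or the index-$3$ ladders of Figure \ref{fig-case8gen}, but carrying a single twisted edge and only two vortices of type (V3) --- whose derived embedding is a triangular embedding of $K_{12s+8} - K_2$ in $N_{h'(12s+8)}$. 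A short sequence of edge flips, as in the proof of Lemma \ref{lem-case2}, would then bring the two vortices into consecutive positions in the rotation of some numbered vertex, establishing the adjacency property.

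For the residue $n \equiv 2 \pmod{12}$, Corollary \ref{cor-ringelinduct} gives a Kainen drawing of $K_{4t+2}$ from one of $K_{2t+2}$ whenever $t \equiv 0 \pmod 3$, in which case the precursor satisfies $2t+2 \equiv 2 \pmod 6$. Because $K_8$ is an exception, the only precursor not supplied by a previous case is $K_{14}$, which I would handle directly --- for instance, by taking the orientable index-$4$ current graph of Figure \ref{fig-case2-s1} and inserting a twisted arc so that its derived embedding lies in $N_{18}$ rather than $S_{11}$, while preserving the adjacency of the two vortices that already made the original construction a Kainen subembedding. With $K_{14}$ and all $K_{12s+8}$ ($s \geq 1$) established, an induction on $n$ via Corollary \ref{cor-ringelinduct} then completes the $n \equiv 2 \pmod{12}$ subcase for all $n \geq 26$, since each required precursor either lies in $\{14\}$, in the $n \equiv 8 \pmod{12}$ family, or has already been produced by a smaller instance of the induction.

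The principal obstacle is the infinite family for $n \equiv 8 \pmod{12}$. Unlike the orientable Lemma \ref{lem-case8}, whose budget of four crossings accommodates a missing $K_5$ and permits a handle attachment to reconnect the vortices, the one-crossing budget here forces the derived embedding of $K_n - K_2$ to be genuinely triangular, and the two vortices must be positionable (by edge flips alone) into adjacent slots of a common rotation. Designing a nonorientable current graph that simultaneously achieves the correct genus $h'(12s+8)$, realizes the right current group, and admits such a flip sequence uniformly in $s$ is the technical crux of the proof.
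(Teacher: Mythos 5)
Your reduction of the $n \equiv 2 \pmod{12}$ residue is exactly the paper's: apply Corollary \ref{cor-ringelinduct} with $t = 3s$, so that the precursor $K_{6s+2}$ is either $K_{14}$, a member of the $8 \pmod{12}$ family, or a smaller instance of the same induction. But the two base-case ingredients are not actually established, and that is where the proposal falls short of a proof. For $n \equiv 8 \pmod{12}$ you correctly identify that a direct infinite-family construction is needed and then defer it as ``the technical crux'' --- a current graph family you \emph{would} design is not one you have designed, so the lemma is unproved for this residue (and hence also for every $n \equiv 2 \pmod{12}$ whose induction chain passes through it). The paper dispatches this case in one sentence by citing already-published nonorientable triangular embeddings of $K_{12s+8}-K_2$ (Korzhik's Case-8 construction and Sun's index-2 construction), observing that in those embeddings the two vortices are already adjacent, so they are Kainen subembeddings as they stand; no new current graphs or edge flips are required. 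You were unaware that the needed objects exist in the literature, which is forgivable, but the burden of constructing them then falls on you and is not discharged.

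The $K_{14}$ base case has the same problem in miniature. Your suggestion --- insert a twisted arc into the orientable index-4 current graph of Figure \ref{fig-case2-s1} --- is only a guess: making an edge twisted (or adding one) changes the face-tracing of the current graph and hence the circuits, so properties (C1)--(C3) and the adjacency of the vortices would all have to be re-verified, and nothing in the proposal does so. (Also, the orientable derived embedding of that current graph lives in $S_9$, not $S_{11}$.) The paper instead exhibits an explicit orientable triangular embedding of $K_{14}-(K_2\cup P_3\cup P_3)$ in Table \ref{tab-k14} and restores the two $P_3$'s with crosscap augmentations (Figure \ref{fig-p3addition}), yielding a nonorientable triangular embedding of $K_{14}-K_2$ in which the missing edge $(0,1)$ crosses $(2,3)$. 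In short: the architecture of your induction is right, but both pillars it rests on are left unbuilt.
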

\begin{proof}
The nonorientable triangular embeddings of $K_{12s+8}-K_2$, $s \geq 1$ constructed in both Korzhik \cite{Korzhik-Case8} and Sun \cite{Sun-Index2} are Kainen subembeddings of $K_{12s+8}$. A nonorientable Kainen subembedding of $K_{14}$ is constructed in Appendix \ref{app-table}. For all larger $n = 12s+2$, $s \geq 2$, we apply Corollary \ref{cor-ringelinduct} and induction with $t = 3s$.
\end{proof}

Recall that for a current graph embedded in a nonorientable surface, then $\alpha(e^+) = \alpha(e^-)$ for all twisted edges. In our drawings, such edges are depicted as ``broken'' and having two arrows pointing in opposite directions. For more details on how to determine the resulting derived embedding, see Section 4.4.5 of Gross and Tucker \cite{GrossTucker}. 

There are index 1 current graphs (see Section 8.3 of Ringel \cite{Ringel-MapColor}) that generate nonorientable triangular embeddings of $K_n-K_2$, for $n \equiv 5, 11 \Mod{12}$. We lift to index 3 so that the two vortices can be adjacent:

\begin{lemma}
When $n \equiv 5,11 \Mod{12}$, $n > 8$, the surface crossing number of $K_n$ in the nonorientable surface of genus $h'(n)$ is 1.
\label{lem-case-non511}
\end{lemma}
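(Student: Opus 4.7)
The plan is to mirror the approach used for the orientable case and for Lemma \ref{lem-case-non28}: exhibit an infinite family of index 3 current graphs with twisted edges whose derived embeddings are nonorientable triangular embeddings of $K_n - K_2$, but with the added property that the two ``missing'' lettered vertices appear adjacent in the rotation of some numbered vertex. By Kainen's lower bound the surface crossing number is at least $1$ for these residues, so adding the missing edge with a single crossing between adjacent lettered vertices immediately gives a matching upper bound and completes the proof.

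First I would set up the two families separately, one for $n \equiv 5 \Mod{12}$ and one for $n \equiv 11 \Mod{12}$, using the ladder notation from the earlier section. The current group will be $\Z_{n-2}$ with the two remaining vertices introduced as (V3) vortices of degree $3$; the rungs of the ladder alternate as usual, but one (or more) edges are now twisted, as signaled by ``broken'' arrows, so that the overall embedding is nonorientable. The two vortices are placed on the same rung (or in adjacent rungs) so that in the face-tracing the circuits $[0], [1], [2]$ incident with one vortex are immediately followed by those incident with the other, forcing the two lettered vertices to appear consecutively in the rotation of at least one numbered vertex.

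Next I would verify properties (C1)–(C3), the KCL condition at each (V1) vertex, and the vortex condition (V3) for both lettered vertices; this is a routine check on the ladder, propagated by induction on the ladder length $s$. Then a direct reading of the face trace near the two vortices shows that the rotation at (say) vertex $0$ contains a consecutive pair of lettered vertices, which by the standard embedding-to-drawing argument means the missing edge between them can be drawn through their shared face with exactly one crossing. Small cases not covered by the general ladder, in particular $K_{11}$, $K_{17}$, and $K_{23}$, I would handle by exhibiting individual current graphs (or explicit rotation systems in an appendix, as was done in Lemma \ref{lem-case-non28} for $K_{14}$).

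The main obstacle I expect is the twisted-edge bookkeeping: the effect of a twisted rung is to reverse the local orientation, which permutes how the three circuits travel through a neighborhood, and this must be arranged so that (C3) still holds, the vortex excesses still have the correct order, and the two vortices still end up adjacent in the final rotation. Getting all three conditions to hold simultaneously while keeping the ladder structure uniform in $s$ is delicate, and in practice likely forces the two residues $n \equiv 5$ and $n \equiv 11 \Mod{12}$ to use slightly different positions for the twisted edges, just as the orientable constructions split by parity of $s$.
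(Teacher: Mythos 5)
Your proposal follows essentially the same route as the paper: lift the known index 1 nonorientable constructions to index 3 current graphs with group $\Z_{n-2}$ so that the two vortices are adjacent, making the derived triangular embedding of $K_n-K_2$ a Kainen subembedding, with the small cases ($K_{11}$, $K_{17}$) handled by ad hoc rotation systems or individual current graphs. The only caveat is that the substance of the paper's proof lies in actually exhibiting the two ladder families (Figures \ref{fig-noncase5} and \ref{fig-noncase11}) with the twisted edges placed so that (C1)--(C3) and the vortex conditions hold, which your plan correctly identifies as the delicate step but does not carry out.
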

\begin{proof}
The planar crossing number of $K_5$ is 1, and a Kainen subembedding of $K_{11}$ is constructed in Appendix \ref{app-table}. For $n \geq 17$, the current graphs in Figures \ref{fig-non17}, \ref{fig-noncase5}, and \ref{fig-noncase11} generate Kainen subembeddings of $K_n$.
\end{proof}

\begin{figure}[tbp]
\centering
\includegraphics[scale=0.9]{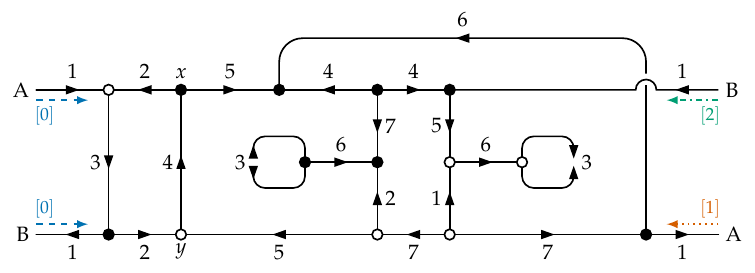}
\caption{A nonorientable index 3 current graph with group $\Z_{15}$.}
\label{fig-non17}
\end{figure}

\begin{figure}[tbp]
\centering
\includegraphics[scale=0.9]{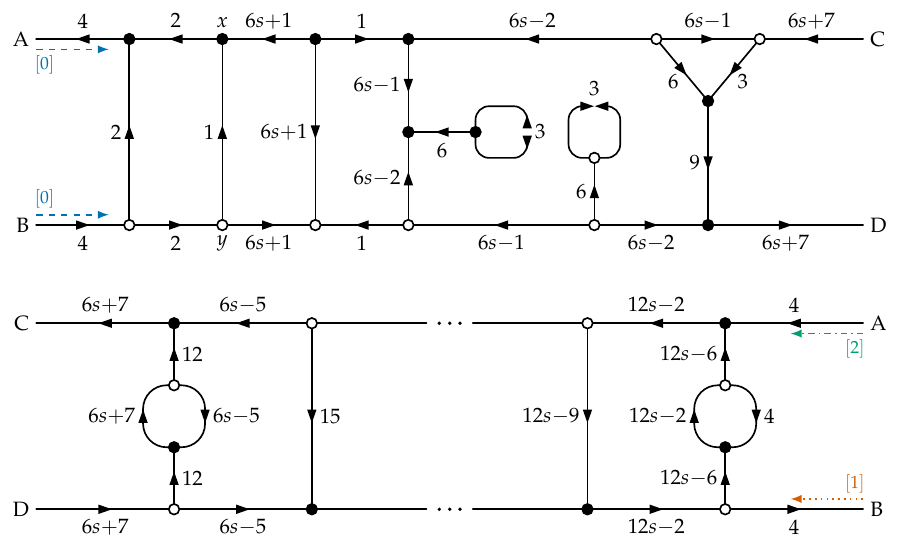}
\caption{A family of nonorientable index 3 current graphs with group $\Z_{12s+3}$, $s \geq 2$.}
\label{fig-noncase5}
\end{figure}

\begin{figure}[tbp]
\centering
\includegraphics[scale=0.9]{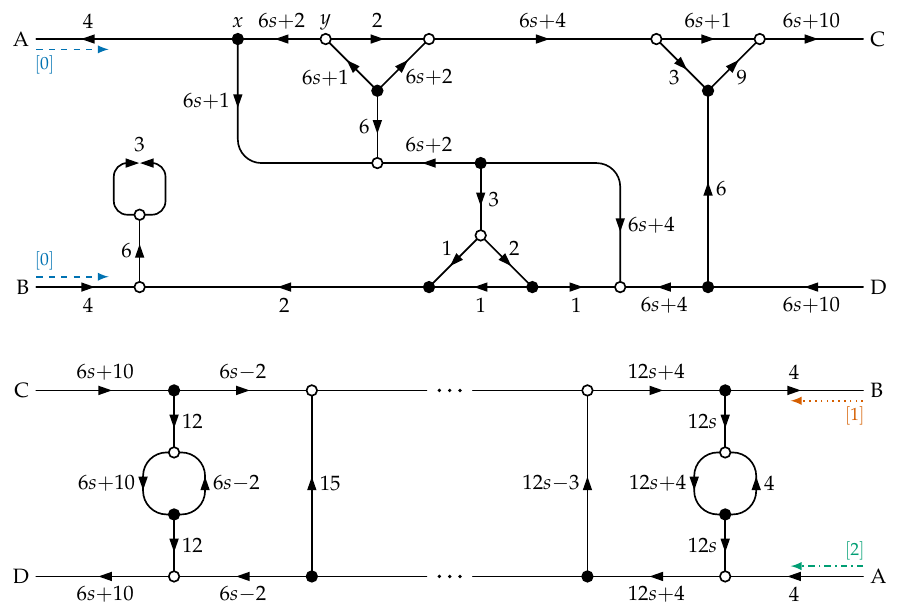}
\caption{A family of nonorientable index 3 current graphs with group $\Z_{12s+9}$, $s \geq 1$.}
\label{fig-noncase11}
\end{figure}

\subsection{Complete bipartite graphs} \label{sec-kmn-non}

The complete bipartite graph $K_{m,n}$, $m,n \geq 3$, has a quadrangular embedding in a nonorientable surface if and only if at least one of $m$ or $n$ is even. Thus, the nonorientable analogue of Kainen's conjecture for complete bipartite graphs treats the case when both $m$ and $n$ are odd.

\begin{theorem}
When $m$ and $n$ are odd and at least 3, the surface crossing number of $K_{m,n}$ in the surface of nonorientable genus $h'(m,n)$ is 1.
\label{thm-nonkmn}
\end{theorem}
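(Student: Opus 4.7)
The lower bound $\tilde{\CR}_{h'(m,n)}(K_{m,n}) \geq 1$ is immediate from the nonorientable analogue of Kainen's bound: when $m$ and $n$ are both odd, $(m-2)(n-2)$ is odd, so the bound equals $1$. For the upper bound, my plan is to mirror the strategy of Section~\ref{sec-kmn}, inducting on $m$ (with $m \leq n$ by the symmetry $K_{m,n}=K_{n,m}$) and using a nonorientable analogue of the diamond sum together with the doubling operation of Figure~\ref{fig-doubling}(a).

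For the base case $m = 3$: for any odd $n \geq 3$, the graph $K_{3,n-1}$ has $n-1$ even, and so by the known nonorientable genus formula it admits a quadrangular embedding in $N_{h'(3,n-1)} = N_{(n-3)/2}$. Applying the degree-$3$ right-vertex doubling of Figure~\ref{fig-doubling}(a) yields a drawing of $K_{3,n}$ with exactly one crossing in the same surface $N_{(n-3)/2} = N_{h'(3,n)}$. As in the proof of Lemma~\ref{lem-k3n}, the doubling can be performed so that at least one left vertex (and, when $n \geq 5$, at least one right vertex) ends up with responsibility $0$; this slacker is needed to feed the induction.

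For the inductive step $m \geq 5$: by induction, there is a Kainen drawing of $K_{m-2,n}$ with one crossing in $N_{h'(m-2,n)}$, and since a single crossing involves only two left vertices while $m-2 \geq 3$, at least one left vertex is a slacker. The graph $K_{4,n}$ has a nonorientable quadrangular embedding in $N_{n-2} = N_{h'(4,n)}$ in which every vertex is a slacker (because one of its parts has even size). Apply the left-vertex version of Lemma~\ref{lem-diamondsum} (stated in the paper for right vertices, but immediate by swapping sides) at a left vertex of degree $n$ in each drawing. The result is a Kainen drawing of $K_{(m-2)+4-2,\,n} = K_{m,n}$ with one crossing in a nonorientable surface of genus
$$h'(m-2,n) + h'(4,n) = \tfrac{1}{2}\bigl((m-4)(n-2)-1\bigr) + (n-2) = \tfrac{1}{2}\bigl((m-2)(n-2)-1\bigr) = h'(m,n),$$
as desired.

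The main technical obstacle is verifying that the diamond sum of Lemma~\ref{lem-diamondsum} and the doubling operation behave exactly as in the orientable setting. For the diamond sum, the Euler characteristic computation ($\chi_1 + \chi_2 - 2$, as for any connected sum along a matched boundary circle) is insensitive to orientability, and the connected sum of a nonorientable surface with any other surface remains nonorientable, so the nonorientable genus arithmetic above is correct regardless of whether the two input embeddings are nonorientable or one of them is orientable. For the base case, the delicate bookkeeping is to route the doubled edges across the quadrangular faces so that at least one left vertex still has responsibility $0$, which is possible because a degree-$3$ doubling introduces only one crossing and hence burdens at most two vertices on each side.
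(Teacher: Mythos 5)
Your proof is correct and follows essentially the same route as the paper: the base case is the identical doubling of one right vertex in a quadrangular embedding of $K_{3,n-1}$, and the general case is handled by diamond-summing with a nonorientable quadrangular embedding. The only (immaterial) difference is that you induct in steps of two using $K_{4,n}$, whereas the paper performs a single diamond sum of the $K_{3,n}$ drawing with a quadrangular embedding of $K_{m-1,n}$; your extra care about slackers and the nonorientable genus arithmetic is sound but already implicit in Proposition~\ref{prop-pigeon} and the connected-sum discussion.
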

\begin{proof}
When $m = 3$, double one right vertex of a quadrangular embedding of $K_{3,n-1}$ like in Lemma \ref{lem-k3n}. If $m > 3$, then take a diamond sum of this drawing of $K_{3,n}$ with a quadrangular embedding of $K_{m-1,n}$. 
\end{proof}

\section{Conclusion}

We solved Kainen's conjectures and their nonorientable analogues on surface crossing numbers of the complete graphs and complete bipartite graphs. However, the Kainen drawings with zero crossings came for free, since they are all previously known genus embeddings. Can one strengthen the results in this paper by replacing the functions $h(n)$ or $h(m,n)$ with $H(n)-1$ and $H(m,n)-1$, respectively? Solving this would determine, for each nonplanar $K_n$ or $K_{m,n}$, a nonzero crossing number in some surface. In Appendix \ref{app-minusone}, we show this for the complete bipartite graphs using the same techniques, but doing the same for complete graphs when $n \equiv 0, 3, 4, 7 \pmod{12}$ would be unwieldy using current graphs. 

For complete bipartite graphs, it is straightforward to extend our results to other surfaces by doubling more vertices or taking diamond sums where both drawings have crossings. What additional results can be obtained this way? 

We previously explained the relationship between Kainen drawings of complete graphs and minimal triangulations. An exact statement of Jungerman and Ringel's result on minimum triangulations is the following:
\begin{theorem}[Jungerman and Ringel \cite{JungermanRingel-Minimal}]
For each nonnegative integer $g \neq 2$, there exists a triangular embedding of a simple graph on 
$$M(g) := \left\lceil \,\frac{7+\sqrt{1+48g}}{2}\,\right\rceil$$
vertices in $S_g$, and this number is smallest possible. 
\end{theorem}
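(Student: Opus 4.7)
The plan is to derive the lower bound from Euler's formula and the simple-graph edge constraint, and then to furnish matching upper bound constructions for every $g \neq 2$. The main obstacle is the upper bound: one must produce, uniformly in $g$, a simple graph on $M(g)$ vertices with a triangular embedding in $S_g$.

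For the lower bound, suppose a simple graph $G$ with $n$ vertices has a triangular embedding in $S_g$. Every face has length $3$, so $2|E(G)| = 3|F|$, and substituting into $n - |E(G)| + |F| = 2 - 2g$ gives $|E(G)| = 3(n + 2g - 2)$. Since $G$ is simple, $|E(G)| \leq \binom{n}{2}$, which rearranges to $n^2 - 7n - 12g + 12 \geq 0$. The positive root of the corresponding quadratic is $(7 + \sqrt{1 + 48g})/2$, so $n \geq M(g)$.

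For the upper bound, the first step is to observe that whenever $g$ equals the genus $(M(g)-3)(M(g)-4)/12$ of $K_{M(g)}$ (which happens precisely when $M(g) \equiv 0, 3, 4, 7 \pmod{12}$), the orientable Map Color Theorem \cite{Ringel-MapColor} directly produces the required triangular embedding of $K_{M(g)}$. The remaining values of $g$ require triangular embeddings of spanning subgraphs of $K_{M(g)}$ with exactly $\binom{M(g)}{2} - 3(M(g) + 2g - 2) > 0$ missing edges. One would partition these $g$ by the residue of $M(g)$ modulo $12$ and the exact number of missing edges, then construct each family via current graphs tailored to that residue, much as in Sections 4 and 5 of the present paper but with the easier task that the missing edges need not be incident with any distinguished ``lettered'' vertex; in particular, index 1 current graphs with one or more vortices suffice for most residues.

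The case $g = 2$ is genuinely excluded: $M(2) = 9$, and a triangulating simple graph would have $3(9 + 4 - 2) = 33 = \binom{9}{2} - 3$ edges, that is, $K_9$ with three edges removed, which Huneke \cite{Huneke-Minimum} ruled out. The hardest part of the proof is assembling the catalogue of current graph families required for the upper bound; as Case 10 of Section \ref{sec-general} illustrates, certain residues resist the standard index 1 templates, forcing either a lift to higher index or ad hoc handle-based modifications analogous to those used throughout this paper.
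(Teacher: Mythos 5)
This theorem is not proved in the paper at all: it is quoted verbatim from Jungerman and Ringel \cite{JungermanRingel-Minimal} in the Conclusion, purely to motivate the closing conjecture, so there is no proof of record to compare yours against. Judged on its own terms, your lower bound is correct and complete: a triangular embedding forces $|E| = 3(n+2g-2)$, simplicity forces $|E| \leq \binom{n}{2}$, and the resulting quadratic inequality gives $n \geq M(g)$. Your explanation of why $g=2$ must be excluded (via Huneke's result that no simple graph on $9$ vertices triangulates $S_2$) is also sound and consistent with how the paper uses that fact elsewhere.

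The gap is the upper bound, which is the entire content of the theorem and which you have only outlined, not proved. The Map Color Theorem covers exactly one value of $g$ for each $n \equiv 0,3,4,7 \pmod{12}$, but for every $n = M(g)$ there are roughly $(n-4)/6$ distinct genera $g$ in the interval $\bigl((n-4)(n-5)/12,\ (n-3)(n-4)/12\bigr]$, and for each such $g$ one must exhibit a triangular embedding in $S_g$ of some simple graph on $n$ vertices, i.e., of $K_n$ with $(n-3)(n-4)/2 - 6g$ edges deleted --- a quantity that grows linearly in $n$ as $g$ decreases within that interval. Producing these families uniformly in both the residue of $n$ and the number of deleted edges is the substance of Jungerman and Ringel's paper, and "construct each family via current graphs tailored to that residue" does not discharge it; as the present paper's own Case 10 discussion shows, even single residues can defeat the standard index 1 templates. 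You are right that the task is easier than finding Kainen subembeddings (the deleted edges need not be concentrated near vortices, and only the extremal genus $h(n)$ is relevant to Kainen's conjecture), but a catalogue of constructions that is merely gestured at is a research program, not a proof. If you intend to cite the result, cite it; if you intend to prove it, the upper bound constructions must actually be written down.
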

The surfaces of genus $h(n)$ constitute a vanishingly small fraction of all surfaces. We believe that Theorem \ref{thm-kn} and Jungerman and Ringel's result have a common generalization:
\begin{conjecture}
For each nonnegative integer $g \neq 2$, there exists a Kainen drawing of the complete graph $K_{M(g)}$ in $S_g$. 
\end{conjecture}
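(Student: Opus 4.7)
The conjecture asserts that for each $g \neq 2$, there exists a Kainen drawing of $K_{M(g)}$ in $S_g$, necessarily with $\delta_g(K_{M(g)}) = \binom{M(g)}{2} - 3(M(g)-2+2g)$ crossings. From the defining inequality $(n-3)(n-4) \geq 12g$ one sees that $M(g) = n$ precisely when $h(n-1) < g \leq h(n)$, so the conjecture decomposes by $n$ into finitely many surfaces per $n$. Deleting one edge from each crossing of the desired drawing yields a triangulation of $S_g$ by a spanning subgraph of $K_{M(g)}$: this is exactly the triangulation whose existence is guaranteed by Jungerman and Ringel's theorem. What the conjecture demands in addition is that this triangulation can be chosen so that every missing edge of $K_{M(g)}$ can be returned to the embedding using a single crossing.

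The plan is to split by $n = M(g)$. When $g = h(n)$ and $n \neq 9$, Theorem~\ref{thm-kn} already supplies the required Kainen drawing, and the excluded value $g = 2$ corresponds exactly to $n = M(2) = 9$. For the ``gap'' genera $h(n-1) < g < h(n)$, I would obtain Kainen drawings of $K_n$ in $S_g$ inductively from the drawings of Theorem~\ref{thm-kn} by iterated \emph{handle destruction}: starting from a Kainen drawing in $S_{g+1}$, cut the surface along a non-separating simple closed curve that is disjoint from the existing crossings and meets the drawing transversally in exactly six edges, cap the resulting boundaries with two disks, and re-route the severed arcs so that each of the six affected edges picks up precisely one new crossing. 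The number six is forced by Euler's formula, since a handle in a triangulation accommodates exactly six extra edges; this is therefore the natural rate at which reducing the genus by one inflates the crossing count.

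The handle-destruction step itself I would verify residue-by-residue, leveraging the current-graph constructions of Lemmas~\ref{lem-case0347}--\ref{lem-case11}. For the triangular residues, the cyclic symmetry of the derived embedding furnishes candidate curves along orbits of subgroups of the current group $\Z_n$, and one such orbit can be arranged to meet exactly six edges. For the nontriangular residues the preexisting crossings are few (at most four), and a pigeonhole-style argument on those few crossings together with the existing cyclic structure should allow the choice of a destructible curve disjoint from them, while preserving the \emph{ad hoc} flips and handles of Lemmas~\ref{lem-case2}, \ref{lem-case8}, \ref{lem-case10}, and \ref{lem-case11}. Iterating the destruction lemma then descends through $S_{h(n)-1}, S_{h(n)-2}, \dotsc, S_{h(n-1)+1}$ for each $n$ in turn.

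The hard part, already anticipated in the Conclusion as making this project ``unwieldy'' by current-graph methods, is the unbounded bookkeeping: $h(n) - h(n-1)$ grows like $n/6$, so each current-graph family must be analyzed at arbitrarily many genera, and after every destruction one must re-verify that no edge's responsibility exceeds $1$ and that the new crossings do not coincide with old ones or with the previously introduced vortex modifications. I expect the most satisfying resolution to bypass iterated destruction in favor of a uniform construction for each residue class --- perhaps a parametric family of higher-index current graphs whose derived embeddings already live naturally in $S_g$ for every $h(n-1) < g \leq h(n)$, or a gadget-insertion scheme analogous to the one used for complete bipartite graphs in the paper's appendix, producing Kainen subembeddings in every target genus simultaneously and thereby sidestepping the need to peel off handles one at a time.
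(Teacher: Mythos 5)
This statement is not proved in the paper at all: it is stated as an open \emph{conjecture}, and the only evidence the paper offers is a computer verification up to $g = 17$. Your text is likewise not a proof but a research plan, and the plan's load-bearing step is missing. The genus bookkeeping you set up is correct --- $M(g) = n$ exactly when $h(n-1) < g \leq h(n)$, the excluded $g = 2$ corresponds to $K_9$, and lowering the genus by one raises Kainen's bound $\delta_g(K_n) = \binom{n}{2} - 3(n-2+2g)$ by exactly $6$ --- but the ``handle destruction lemma'' that would carry a Kainen drawing from $S_{g+1}$ down to $S_g$ is asserted, not established. Nothing guarantees a non-separating curve meeting the drawing in exactly six edges, all disjoint from existing crossings, such that each severed edge can be re-routed with precisely one new crossing while every edge keeps responsibility at most $1$ and the new crossings avoid one another, the old crossings, and the \emph{ad hoc} modifications of the lemmas for the nontriangular residues. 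That single unproved step is essentially the entire content of the conjecture.

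The difficulty is also quantitatively worse than your sketch suggests. For $g$ near $h(n-1)+1$ the required number of crossings is about $6\bigl(h(n)-h(n-1)\bigr)$, which grows linearly in $n$; so one must exhibit triangular embeddings of $K_n$ minus on the order of $n/2$ edges in which \emph{every} missing edge is recoverable with a single crossing, and one must do so for a number of intermediate genera per $n$ that is unbounded. A ``residue-by-residue'' analysis therefore does not reduce to finitely many current-graph families in any obvious way, and the paper itself emphasizes that Kainen drawings with many crossings are much harder to produce than arbitrary minimal triangulations (Jungerman and Ringel's theorem gives \emph{some} $M(g)$-vertex triangulation of $S_g$, not one that is a spanning subgraph of $K_{M(g)}$ with the required one-crossing re-insertion property). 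Your closing paragraph concedes as much; as written, the proposal identifies the right target and the right arithmetic but supplies no proof of the statement, which remains open.
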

We have verified this conjecture up to $g = 17$ using computer search.

\bibliographystyle{alpha}
\bibliography{biblio}

\appendix

\section{Small rotation systems}\label{app-table}

In this section, we collect rotation systems of embeddings that are not derived from current graphs. All tables shown here describe orientable, triangular embeddings of simple graphs. 

Lee \cite{Lee-GenusCrossing} found a Kainen subembedding of $K_{11}$ by finding a special minimum genus embedding of $K_{10}$ and subdividing the nontriangular face. Since this work is unpublished, we converted her drawing into the rotation system shown in Table \ref{tab-k11}.
\begin{center}
$$\begin{array}{rlllllllllllllllllllllllllllll}
1. & 6 & 3 & 7 & 8 & 4 & 9 & 5 & 10 & 2 & 11 \\
2. & 1 & 10 & 6 & 7 & 4 & 8 & 5 & 9 & 3 & 11 \\
3. & 2 & 9 & 8 & 10 & 5 & 7 & 1 & 6 & 4 & 11 \\
4. & 3 & 6 & 10 & 9 & 1 & 8 & 2 & 7 & 5 & 11 \\
5. & 4 & 7 & 3 & 10 & 1 & 9 & 2 & 8 & 6 & 11 \\
6. & 5 & 8 & 9 & 7 & 2 & 10 & 4 & 3 & 1 & 11 \\
7. & 1 & 3 & 5 & 4 & 2 & 6 & 9 & 10 & 8 \\
8. & 1 & 7 & 10 & 3 & 9 & 6 & 5 & 2 & 4 \\
9. & 1 & 4 & 10 & 7 & 6 & 8 & 3 & 2 & 5 \\
10. & 1 & 5 & 3 & 8 & 7 & 9 & 4 & 6 & 2 \\
11. & 1 & 2 & 3 & 4 & 5 & 6
\end{array}$$
\captionof{table}{A Kainen subembedding of $K_{11}$.}\label{tab-k11}
\end{center}

The rotation system in Table \ref{tab-k20} is used to construct a Kainen drawing of $K_{20}$. It can be thought of as lifting a current graph of Jungerman and Ringel \cite{JungermanRingel-Minimal} to index 6. 

\begin{center}
$$\begin{array}{rrrrrrrrrrrrrrrrrrrrrrrrrrrrrr}
0. & 1 & 17 & x & 3 & 13 & 14 & 12 & 9 & 8 & 5 & 4 & 11 & 7 & 10 & 15 & 6 & 2 & w_0 & 16 \\
1. & 0 & 16 & x & 2 & 6 & 9 & 14 & 8 & 11 & 15 & 13 & 4 & 12 & 5 & 10 & 7 & 3 & w_1 & 17 \\
2. & 0 & 6 & 1 & x & 5 & 13 & 8 & 9 & 16 & 11 & 17 & 12 & 3 & 14 & 7 & 15 & 10 & 4 & w_0 \\
3. & 0 & x & 4 & 15 & 11 & 5 & w_1 & 1 & 7 & 17 & 9 & 10 & 14 & 2 & 12 & 6 & 16 & 8 & 13 \\
4. & 0 & 5 & 16 & 14 & 9 & 12 & 1 & 13 & 17 & 8 & 15 & 3 & x & 7 & 6 & w_0 & 2 & 10 & 11 \\
5. & 0 & 8 & 17 & 15 & 9 & 13 & 2 & x & 6 & 7 & w_1 & 3 & 11 & 14 & 10 & 1 & 12 & 16 & 4 \\
6. & 7 & 5 & x & 9 & 1 & 2 & 0 & 15 & 14 & 11 & 10 & 17 & 13 & 16 & 3 & 12 & 8 & w_0 & 4 \\
7. & 6 & 4 & x & 8 & 12 & 15 & 2 & 14 & 17 & 3 & 1 & 10 & 0 & 11 & 16 & 13 & 9 & w_1 & 5 \\
8. & 6 & 12 & 7 & x & 11 & 1 & 14 & 15 & 4 & 17 & 5 & 0 & 9 & 2 & 13 & 3 & 16 & 10 & w_0 \\
9. & 6 & x & 10 & 3 & 17 & 11 & w_1 & 7 & 13 & 5 & 15 & 16 & 2 & 8 & 0 & 12 & 4 & 14 & 1 \\
10. & 6 & 11 & 4 & 2 & 15 & 0 & 7 & 1 & 5 & 14 & 3 & 9 & x & 13 & 12 & w_0 & 8 & 16 & 17 \\
11. & 6 & 14 & 5 & 3 & 15 & 1 & 8 & x & 12 & 13 & w_1 & 9 & 17 & 2 & 16 & 7 & 0 & 4 & 10 \\
12. & 13 & 11 & x & 15 & 7 & 8 & 6 & 3 & 2 & 17 & 16 & 5 & 1 & 4 & 9 & 0 & 14 & w_0 & 10 \\
13. & 12 & 10 & x & 14 & 0 & 3 & 8 & 2 & 5 & 9 & 7 & 16 & 6 & 17 & 4 & 1 & 15 & w_1 & 11 \\
14. & 12 & 0 & 13 & x & 17 & 7 & 2 & 3 & 10 & 5 & 11 & 6 & 15 & 8 & 1 & 9 & 4 & 16 & w_0 \\
15. & 12 & x & 16 & 9 & 5 & 17 & w_1 & 13 & 1 & 11 & 3 & 4 & 8 & 14 & 6 & 0 & 10 & 2 & 7 \\
16. & 12 & 17 & 10 & 8 & 3 & 6 & 13 & 7 & 11 & 2 & 9 & 15 & x & 1 & 0 & w_0 & 14 & 4 & 5 \\
17. & 12 & 2 & 11 & 9 & 3 & 7 & 14 & x & 0 & 1 & w_1 & 15 & 5 & 8 & 4 & 13 & 6 & 10 & 16 \\
w_0. & 0 & 2 & 4 & 6 & 8 & 10 & 12 & 14 & 16 \\
w_1. & 1 & 3 & 5 & 7 & 9 & 11 & 13 & 15 & 17 \\
x. & 0 & 17 & 14 & 13 & 10 & 9 & 6 & 5 & 2 & 1 & 16 & 15 & 12 & 11 & 8 & 7 & 4 & 3
\end{array}$$
\captionof{table}{An orientable triangular embedding used to construct a Kainen drawing of $K_{20}$.}\label{tab-k20}
\end{center}

Two special nonorientable Kainen drawings are needed in Section \ref{sec-kn-non}. To circumvent the need for writing down and face-tracing general rotation systems, we present orientable embeddings of some spanning subgraphs in Tables \ref{tab-k11} and \ref{tab-k14}. The missing $P_3$ subgraphs can be incorporated into the embeddings by the crosscap augmentation shown in Figure \ref{fig-p3addition}. For both embeddings, we can set $v = 4$ and $(a,b,c,d) = (5, 6, 7, 8)$. For Table \ref{tab-k14}, the remaining $P_3$ can be recovered using $v = 9$ and $(a,b,c,d) = (10, 11, 12, 13)$. Both resulting embeddings are nonorientable and triangular, and in both embeddings, the remaining missing edge $(0,1)$ can be drawn across the edge $(2,3)$. 

\begin{center}
$$\begin{array}{rlllllllllllllllllllllllllllll}
0. & 2 & 4 & 6 & 9 & 5 & 8 & 10 & 7 & 3 \\
1. & 2 & 3 & 5 & 4 & 9 & 8 & 7 & 10 & 6 \\
2. & 0 & 3 & 1 & 6 & 5 & 10 & 8 & 9 & 7 & 4 \\
3. & 0 & 7 & 9 & 6 & 10 & 4 & 8 & 5 & 1 & 2 \\
4. & 0 & 2 & 7 & 8 & 3 & 10 & 9 & 1 & 5 & 6 \\
5. & 0 & 9 & 10 & 2 & 6 & 4 & 1 & 3 & 8 \\
6. & 0 & 4 & 5 & 2 & 1 & 10 & 3 & 9 \\
7. & 0 & 10 & 1 & 8 & 4 & 2 & 9 & 3 \\
8. & 0 & 5 & 3 & 4 & 7 & 1 & 9 & 2 & 10 \\
9. & 0 & 6 & 3 & 7 & 2 & 8 & 1 & 4 & 10 & 5 \\
10. & 0 & 8 & 2 & 5 & 9 & 4 & 3 & 6 & 1 & 7
\end{array}$$
\captionof{table}{An orientable triangular embedding of $K_{11}-(K_2 \cup P_3)$.}\label{tab-non-k11}
\end{center}

\begin{center}
$$\begin{array}{rlllllllllllllllllllllllllllll}
0. & 2 & 11 & 5 & 10 & 4 & 12 & 8 & 7 & 13 & 6 & 9 & 3\\
1. & 2 & 3 & 6 & 10 & 7 & 4 & 13 & 8 & 11 & 12 & 5 & 9\\
2. & 0 & 3 & 1 & 9 & 6 & 7 & 10 & 8 & 13 & 5 & 12 & 4 & 11\\
3. & 0 & 9 & 7 & 11 & 4 & 8 & 10 & 5 & 13 & 12 & 6 & 1 & 2\\
4. & 0 & 10 & 9 & 5 & 6 & 13 & 1 & 7 & 8 & 3 & 11 & 2 & 12\\
5. & 0 & 11 & 6 & 4 & 9 & 1 & 12 & 2 & 13 & 3 & 10\\
6. & 0 & 13 & 4 & 5 & 11 & 10 & 1 & 3 & 12 & 7 & 2 & 9\\
7. & 0 & 8 & 4 & 1 & 10 & 2 & 6 & 12 & 11 & 3 & 9 & 13\\
8. & 0 & 12 & 9 & 11 & 1 & 13 & 2 & 10 & 3 & 4 & 7\\
9. & 0 & 6 & 2 & 1 & 5 & 4 & 10 & 11 & 8 & 12 & 13 & 7 & 3\\
10. & 0 & 5 & 3 & 8 & 2 & 7 & 1 & 6 & 11 & 9 & 4\\
11. & 0 & 2 & 4 & 3 & 7 & 12 & 1 & 8 & 9 & 10 & 6 & 5\\
12. & 0 & 4 & 2 & 5 & 1 & 11 & 7 & 6 & 3 & 13 & 9 & 8\\
13. & 0 & 7 & 9 & 12 & 3 & 5 & 2 & 8 & 1 & 4 & 6
\end{array}$$
\captionof{table}{An orientable triangular embedding of $K_{14}-(K_2 \cup P_3 \cup P_3)$.}\label{tab-k14}
\end{center}

\begin{figure}[tbp]
\centering
\includegraphics[scale=0.9]{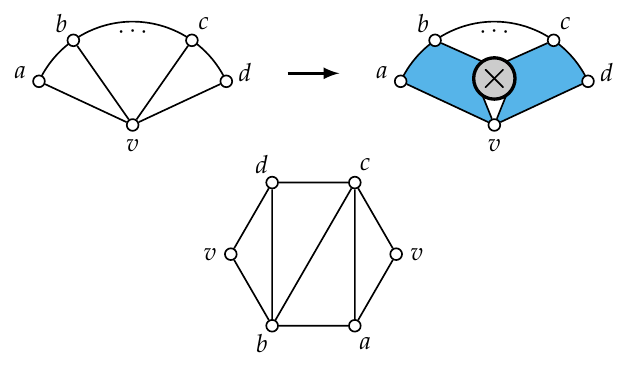}
\caption{Adding three edges with a crosscap.}
\label{fig-p3addition}
\end{figure}

\section{Kainen drawings of $K_{m,n}$ with four (or two) crossings}\label{app-minusone}

We calculate the crossing number of $K_{m,n}$ in the surface of genus $H(m,n)-1$. The complete bipartite graphs where $H(m,n)-1 < h(m,n)$ are the ones that have a quadrangular embedding in some orientable surface, i.e., when $m$ and $n$ are both even, or if one of $m$ or $n$ is congruent to 2 modulo 4. We show that Kainen's lower bound is tight except in one case.

\begin{theorem}
Let $m$ and $n$ be integers such that $m, n \geq 3$ and $(m-2)(n-2) \equiv 0 \Mod{4}$. Then $K_{m,n}$ has a Kainen drawing in $S_{H(m,n)-1}$ with four crossings, except $K_{3,6} = K_{6,3}$, which has planar crossing number 6. 
\label{thm-extendorient}
\end{theorem}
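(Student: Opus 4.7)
The plan is to proceed in direct parallel with the proof of Theorem \ref{thm-kmn}: establish a small collection of base Kainen drawings with four crossings, then propagate them to all admissible $(m,n)$ by iterated diamond sums with quadrangular embeddings. First I verify that Kainen's lower bound in this surface is
\[\delta_{H(m,n)-1}(K_{m,n}) = (m-2)(n-2) - 4(H(m,n)-1) = 4,\]
where the second equality uses $H(m,n)=(m-2)(n-2)/4$ when $(m-2)(n-2)\equiv 0\pmod{4}$. Thus a Kainen drawing here means exactly four crossings.

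For the inductive step I would apply Lemma \ref{lem-diamondsum} to a Kainen drawing of $K_{m,n_1}$ in $S_{H(m,n_1)-1}$ with four crossings (and a right slacker), together with a quadrangular embedding of $K_{m,n_2}$ in $S_{H(m,n_2)}$. Because $(H(m,n_1)-1)+H(m,n_2)=H(m,n_1+n_2-2)-1$ whenever both $(m-2)(n_i-2)\equiv 0\pmod{4}$, the diamond sum yields a Kainen drawing of $K_{m,n_1+n_2-2}$ in the desired surface with four crossings. Taking the attachment piece to be $K_{m,3}$ (quadrangular when $m\equiv 2\pmod{4}$), $K_{m,4}$ (quadrangular when $m$ is even), or $K_{m,6}$ (always quadrangular for $m\geq 2$), the step reaches every admissible $n$ from a small number of seeds.

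The base cases I would establish are: the Zarankiewicz drawing of $K_{4,4}$ in $S_0$ with four crossings, anchoring $m=4$; the equality $\CR_1(K_{3,10})=\lfloor 49/12\rfloor=4$ due to Guy, Jenkyns, and Schaer \cite{Guy-ToroidalCompleteBipartite}, anchoring $m=3$ (with $K_{3,6}$ excluded); and, for each of $m=5,6,7,8$, a small Kainen drawing built from the previous cases via left-sided diamond sums, using the symmetry $K_{m,n}=K_{n,m}$ to reinterpret Lemma \ref{lem-diamondsum} as summing at left slackers. A handful of ``double-a-vertex'' operations (Lemmas \ref{lem-k3n} and \ref{lem-k4n}) applied to quadrangular embeddings will fill in any remaining small cases. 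The exception $K_{3,6}$ is dispatched by citing the classical planar crossing number $\CR_0(K_{3,6})=6$ (Kleitman \cite{Kleitman-K5n}), which strictly exceeds Kainen's lower bound of four.

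The principal obstacle is confirming the existence of right (or, in the symmetric reformulation, left) slackers in the small base cases, since Proposition \ref{prop-pigeon} only guarantees a slacker when the relevant side has more than $2\cdot 4=8$ vertices, a condition that fails for $K_{4,4}$, $K_{4,6}$, $K_{5,6}$, and $K_{6,4}$. For each of these I plan to exhibit the drawing explicitly and designate a vertex of responsibility zero, either by choosing where the doubled vertex is placed (as in the proof of Lemma \ref{lem-k3n}) or by selecting which edge of each crossing is deleted in the underlying Kainen subembedding. Once the induction is seeded with a drawing whose right side has at least nine vertices (for instance, $K_{m,10}$ for $m\in\{3,4\}$, obtained by doubling from a quadrangular embedding of $K_{m,8}$ in the same surface), Proposition \ref{prop-pigeon} supplies slackers automatically and the diamond-sum induction proceeds uniformly to cover all remaining admissible pairs.
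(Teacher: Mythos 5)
Your overall architecture (a few seed drawings propagated by diamond sums with quadrangular embeddings, plus vertex doubling) is the same as the paper's, and your computations of the lower bound and of the genus additivity are correct. The gap is that all of the difficulty in this theorem lives in the small base cases, and yours are not established. Most concretely, your $m=4$ anchor fails: in the Zarankiewicz drawing of $K_{4,4}$ there is one crossing in each quadrant and every vertex participates in the crossings of exactly two quadrants, so every vertex has responsibility $2$ and there is no slacker on either side. Your proposed remedy of ``selecting which edge of each crossing is deleted'' cannot create one, because responsibility is a property of the drawing itself, not of the choice of deleted edges. What does work for the entire $m=4$ row --- and is the spirit of what the paper does --- is to bypass diamond sums altogether there: double two right vertices of a quadrangular embedding of $K_{4,n-2}$ in $S_{H(4,n-2)}=S_{H(4,n)-1}$, which yields $K_{4,n}$ with four crossings directly for every even $n$.

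The second gap is the cases $m\in\{5,6,7,8\}$, which you wave at. Proposition \ref{prop-pigeon} is useless on a side with at most $8$ vertices, and your ``left-sided diamond sums from previous cases'' would need left slackers in drawings with only $3$ or $4$ left vertices and several crossings; these must be checked by hand and sometimes do not exist. The device you are missing is the paper's $2+2$ decomposition: for $m,n\equiv 0\pmod{4}$ it sums two \emph{two}-crossing Kainen drawings, $K_{m,3}$ and $K_{m,n-1}$, taken from Theorem \ref{thm-kmn}, whose right slackers come from Lemma \ref{lem-k3n} (a three-vertex side, where the doubled vertices can be placed to protect one vertex) and Proposition \ref{prop-pigeon}. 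Similarly, for $m\equiv 2\pmod{4}$, $m\geq 10$, the paper doubles four left vertices of a quadrangular $K_{m-4,3}$, again steering all crossings away from one right vertex. Even with these tools, $K_{6,5}$ resists every decomposition (one summand would have to be $K_{6,3}=K_{3,6}$, the exception, or a small drawing with no guaranteed slacker), and the paper has to exhibit a brand-new explicit drawing of it (Figure \ref{fig-k56}); your proposal has no path to $K_{6,5}$ or $K_{6,4}$ as written.
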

\begin{proof}
Kleitman \cite{Kleitman-K5n} proved that $K_{3,6}$ requires 6 crossings in the plane.

If $m, n \equiv 0 \Mod{4}$, then we can take the diamond sum of the drawings of $K_{m, 3}$ and $K_{m, n-1}$ with two crossings each constructed in the proof of Theorem \ref{thm-kmn}. Both drawings have right slackers by Lemma \ref{lem-k3n} and Proposition \ref{prop-pigeon}.

When $m = 6$ and $n \geq 4$, a few special constructions are needed. For $K_{6,4}$, we can double two left vertices in the toroidal embedding of $K_{4,4}$. A Kainen drawing of $K_{6,5}$ with four crossings and a right slacker (vertex $b$ or $c$) is shown in Figure \ref{fig-k56}. When $n \geq 6$, we can take the diamond sum of the aforementioned drawing of $K_{6,5}$ with a quadrangular embedding of $K_{6,n-3}$. 

Finally, suppose that $m \equiv 2 \Mod{4}$ and $m \geq 10$. By doubling four left vertices in a quadrangular embedding of $K_{m-4,3}$, we obtain a Kainen drawing of $K_{m, 3}$ with four crossings that has a right slacker. We take the diamond sum of this drawing with a quadrangular embedding of $K_{m, n-1}$. 

\end{proof}

\begin{figure}[tbp]
\centering
\includegraphics[scale=0.9]{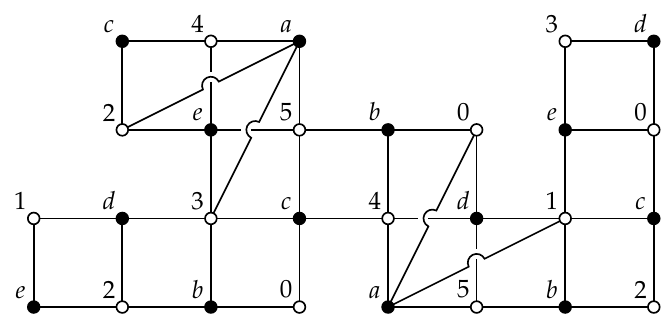}
\caption{A polyhedral net for $K_{5,6}-K_{1,4}$.}
\label{fig-k56}
\end{figure}

We conclude with the analogue for nonorientable embeddings:

\begin{theorem}
Let $m$ and $n$ be integers such that $3 \leq m \leq n$, where at least one of $m$ or $n$ is even. Then $K_{m,n}$ has a Kainen drawing in $N_{H'(m,n)-1}$ with two crossings. 
\label{thm-extendnonorient}
\end{theorem}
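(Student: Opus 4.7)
The plan is to adapt the proof of Theorem \ref{thm-nonkmn} by performing the doubling operation twice instead of once, thereby shedding one crosscap from the nonorientable minimum genus at the cost of exactly one additional crossing. By the symmetry $K_{m,n} = K_{n,m}$, I may assume without loss of generality that $n$ is even, and then handle $m=3$ and $m \geq 4$ separately, mirroring the two cases in the proof of Theorem \ref{thm-nonkmn}.

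For the base case $m = 3$: I start from a nonorientable quadrangular embedding of $K_{3, n-2}$ in $N_{h'(3, n-2)}$, which exists because $n-2$ is even and $K_{3,n-2}$ meets the Euler lower bound on this surface. A direct computation gives $h'(3, n-2) = (n-4)/2 = h'(3, n) - 1$, so the starting surface already matches the one called for by the theorem. I then apply the degree-$3$ doubling operation of Figure \ref{fig-doubling}(a) twice, each doubling adding one new right vertex and (as in the orientable count implicit in Lemma \ref{lem-k3n}) introducing exactly one new crossing. The result is a Kainen drawing of $K_{3,n}$ with two crossings in $N_{h'(3,n)-1}$. The two doublings can furthermore be arranged so that both induced crossings involve a common pair of left vertices, leaving the third left vertex as a slacker.

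For $m \geq 4$: I take a diamond sum (the left-vertex, nonorientable analogue of Lemma \ref{lem-diamondsum}; this is valid because the diamond sum is a connected sum, which preserves nonorientability, and by symmetry the lemma applies to left slackers as well as right) of the $K_{3,n}$ drawing just constructed with a nonorientable quadrangular embedding of $K_{m-1, n}$, which exists since $n$ is even. The sum is performed at the left slacker of the $K_{3,n}$ drawing and any left vertex of $K_{m-1,n}$. The genus arithmetic
$$
(h'(3,n)-1) + h'(m-1,n) = \tfrac{n-2}{2} - 1 + \tfrac{(m-3)(n-2)}{2} = \tfrac{(m-2)(n-2)}{2} - 1 = h'(m,n) - 1
$$
confirms that the resulting Kainen drawing of $K_{m,n}$ with two crossings lies in the desired surface.

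The main obstacle I anticipate is securing the left slacker in the $K_{3, n}$ drawing, since with only three left vertices and two crossings it is a priori possible for every left vertex to participate in a crossing; I must arrange both doublings so that each introduced crossing avoids one fixed left vertex. This is the direct nonorientable analogue of the slacker-preservation argument already used in the proof of Lemma \ref{lem-k3n}, and the flexibility afforded by the choice of which of the three quadrangular faces around the doubled vertex houses the new vertex (and which incident edge the new crossing traverses) should make this routine. Aside from this bookkeeping, the proof is essentially the proof of Theorem \ref{thm-nonkmn} with one extra doubling inserted.
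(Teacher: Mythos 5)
Your proposal is correct and follows essentially the same route as the paper: double two right vertices of a nonorientable quadrangular embedding of $K_{3,n-2}$ to obtain the two-crossing base case, then diamond-sum with a quadrangular embedding of $K_{m-1,n}$, minding the left slacker. The only (harmless) divergence is in the mixed-parity case, where you normalize by symmetry so that the shared side is even and reuse the same diamond sum, while the paper instead diamond-sums the two one-crossing Kainen drawings of $K_{3,n}$ and $K_{m-1,n}$ from Theorem \ref{thm-nonkmn}; both give the same genus count.
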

\begin{proof}
The claim is true for $m = 3$ (or $n = 3$) because one can double two vertices in any nonorientable quadrangular embedding of $K_{3, n-2}$ to obtain a drawing of $K_{3,n}$ with two crossings. If $m$ and $n$ are both even, we can form the diamond sum of the drawing of $K_{3,n}$ with two crossings we just constructed and a quadrangular embedding of $K_{m-1,n}$. If $m$ is even and $n$ is odd, we can take the diamond sum of the Kainen drawings of $K_{3, n}$ and $K_{m-1, n}$ with one crossing each, constructed in Theorem \ref{thm-nonkmn}.
\end{proof}

\end{document}